\definecolor{darkgreen}{rgb}{0,0.45,0}
\theoremstyle{plain}
\newtheorem{thm}{Theorem}
\newtheorem*{thm*}{Theorem}
\newtheorem{lem}[thm]{Lemma}
\newtheorem*{lem*}{Lemma}
\newtheorem*{prop*}{Proposition}
\newtheorem{cor}[thm]{Corollary}
\theoremstyle{definition}
\newtheorem{defn}[thm]{Definition}
\newtheorem*{defn*}{Definition}
\newtheorem{example}[thm]{Example}
\theoremstyle{remark}
\newtheorem{rmk}[thm]{Remark}
\numberwithin{thm}{section}
\theoremstyle{plain} 
\newcommand{\thistheoremname}{}
\newtheorem*{genericthm*}{\thistheoremname}
\newenvironment{namedthm*}[1]
  {\renewcommand{\thistheoremname}{#1}%
   \begin{genericthm*}}
  {\end{genericthm*}}
\title[]{Categorical Lyapunov stability II: \\ Stability of Systems
}
\date{\today}
\author[]{Aaron D. Ames}
\author[]{S\'ebastien Mattenet}
\author[]{Joe Moeller}
\address{California Institute of Technology, Pasadena, CA 91125, USA}
\address{Université Catholique de Louvain, Pl. de l'Université 1, 1348 Ottignies-Louvain-la-Neuve, Belgium}
\email{ames@caltech.edu,
sebastien.mattenet@uclouvain.be,
jmoeller@caltech.edu}
\newcommand{\flow}{\phi}
\newcommand{\F}{\mathcal F}
\renewcommand{\L}{\mathcal L}
\newcommand{\I}{\textstyle \int}
\newcommand{\U}{U}
\newcommand{\D}{\mathrm D}
\newcommand{\ex}[1]{
\hspace*{\fill}
\textcolor{gray}{#1}
}
\newcommand{\exmath}[1]{
\tag*{
\textcolor{gray}{$#1$}
}}
\newcommand{\excenter}[1]{
\textcolor{gray}{#1}
}
\newcommand{\into}{\hookrightarrow}
\newcommand{\onto}{\twoheadrightarrow}
\newcommand{\id}{\mathrm{id}}
\newcommand{\inv}{^{-1}}
\newcommand{\norm}{\|\cdot\|_{x^*}}
\newcommand{\xnorm}[1]{\|#1\|_{x^*}}
\newcommand{\define}[1]{{\bf \boldmath{#1}}}
\newcommand{\maps}{\colon}
\newcommand{\To}{\Rightarrow}
\newcommand{\op}{^\mathrm{op}}
\newcommand{\N}{\mathbb N}
\newcommand{\R}{\mathbb R}
\newcommand{\Rplus}{{\mathbb R_{\geq0}}}
\newcommand{\T}{\mathrm {T}}
\newcommand{\K}{\mathcal K}
\newcommand{\category}[1]{\mathsf{#1}}
\newcommand{\B}{\category B}
\newcommand{\C}{\category C}
\renewcommand{\P}{\category P}
\newcommand{\namedcat}[1]{\mathsf{#1}}
\newcommand{\TComp}{T\mhyphen\Sys_\F}
\newcommand{\TFlow}{T\mhyphen\Flow}
\newcommand{\Dyn}{\namedcat{Dyn}}
\newcommand{\Man}{\namedcat{Man}}
\newcommand{\Meas}{\namedcat{Meas}}
\newcommand{\Set}{\namedcat{Set}}
\newcommand{\Sys}{\namedcat{Sys}}
\newcommand{\Flow}{\namedcat{Flow}}
\mathchardef\mhyphen="2D
\begin{document}

\begin{abstract}
    Lyapunov's theorem provides a foundational characterization of stable equilibrium points in dynamical systems. In this paper, we develop a framework for stability for $\F$-coalgebras. We give two definitions for a categorical setting in which we can study the stability of a coalgebra for an endofunctor $\F$. One is minimal and better suited for concrete settings, while the other is more intricate and provides a richer theory. We prove a Lyapunov theorem for both notions of setting for stability, and a converse Lyapunov theorem for the second. 
\end{abstract}

\maketitle
\setcounter{tocdepth}{1}
\tableofcontents

\section{Introduction}
\label{sec:intro}

One of the most revealing features of a dynamical system is an equilibrium point, one which the dynamics does not ask to move. Identifying an equilibrium point is relatively simple as it is a property only to do with what the system specifies at the point. This contrasts with finer-grained classes of equilibrium points, which are often defined by their impact on their local neighborhood. For instance, every point that starts near a \emph{stable} equilibrium point will remain nearby as time progresses.

Verifying the stability of an equilibrium point directly is generally difficult, as it requires knowing the trajectory of every nearby point, to solve the defining differential equations of the system. Lyapunov devised an indirect method for certifying the stability of an equilibrium point without solving the system \cite{lyapunov1892general, lyapunov1992general}. 

First, consider a vector field $\dot x = f(x)$ on $E \subseteq \mathbb R^n$ which is ``strictly contracting'' towards an equilibrium point $x^*$, i.e., every vector $f(x)$ points roughly towards $x^*$ from $x$, rather than away. The function $V \maps E  \to \mathbb R$ given by $V(x_0) \coloneq \|x_0-x^*\|$ returns the distance of the initial condition $x_0$ to $x^*$. This function satisfies certain properties:  
\begin{itemize}
    \item It is ``positive definite'' with respect to $x^*$: $V(x_0) \geq 0$, and $V(x_0) = 0$ implies $x=x^*$.
    \item It is ``decrescent'' with respect to $f$: since the system is strictly contracting towards $x^*$ over time, we know that if $\flow_t(x_0)$ is the solution curve for the initial condition $x_0$, then $V(\flow_t(x_0)) \leq V(x_0)$, or said differently, $\frac{\partial V}{\partial x} \cdot f(x) \leq 0$ for all $x \in E$.  
\end{itemize}
These properties encapsulate that fact that the system is stable, which is intuitively obvious in this simple case. The value of $V$ is always non-negative, and any point will decrease its $V$-value over time, and we know this with certainty without actually solving the system. What Lyapunov realized is that for any system (not necessarily strictly contracting) \emph{any function} satisfying these two properties guarantees the stability of $x^*$. 

\begin{thm}[Classical Lyapunov theorem]
\label{thm:ClassicLyapunov}
    Let $\dot x = f(x)$ be an autonomous system of differential equations on $E \subseteq \mathbb R^n$, and $x^*$ an equilibrium point. Then $x^*$ is stable if and only if there is a function $V \maps E \to \mathbb R$ such that 
    \begin{enumerate}
        \item $V(x_0) \geq 0$, and $V(x_0) = 0$ implies $x=x^*$
        \item $\frac{\partial V}{\partial x} \cdot f(x) \leq 0$ for all $x \in E$.
    \end{enumerate}
\end{thm}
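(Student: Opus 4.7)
The plan is to prove the two implications separately. The forward direction (existence of $V$ implies stability) is the heart of Lyapunov's original insight and is relatively direct: it turns a local derivative condition into a global invariance statement using the chain rule together with compactness of spheres around $x^*$. The reverse direction is, in my experience, the delicate one, because one must actually construct $V$ from the dynamics.

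For the forward direction, my plan is as follows. Fix $\epsilon>0$ small enough that the closed ball $\bar B_\epsilon(x^*)\subseteq E$. The sphere $S_\epsilon(x^*)$ is compact and $V$ is continuous there with $V>0$ by condition (1) (since $x^*\notin S_\epsilon(x^*)$), so $V$ attains a positive minimum $m:=\min_{S_\epsilon(x^*)} V>0$. Continuity of $V$ at $x^*$ with $V(x^*)=0$ lets me pick $\delta>0$ so that $\|x_0-x^*\|<\delta$ forces $V(x_0)<m$. The chain rule combined with condition (2) gives
\[
\tfrac{d}{dt}V(\flow_t(x_0))=\tfrac{\partial V}{\partial x}(\flow_t(x_0))\cdot f(\flow_t(x_0))\leq 0,
\]
so $V$ is non-increasing along trajectories and $V(\flow_t(x_0))\leq V(x_0)<m$ for all $t\geq 0$. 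A trajectory escaping $\bar B_\epsilon(x^*)$ would have to cross $S_\epsilon(x^*)$ by continuity, but at any such crossing point $V\geq m$, a contradiction. As a side benefit, the trajectory is trapped in a compact subset of $E$, which guarantees it exists for all $t\geq 0$.

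For the converse, the natural candidate is $V(x_0):=\sup_{t\geq 0}\|\flow_t(x_0)-x^*\|$. Stability of $x^*$ guarantees that the supremum is finite on a neighborhood of $x^*$ and can be made as small as we like by shrinking that neighborhood, giving condition (1). The semigroup property of $\flow$ gives $V(\flow_s(x_0))\leq V(x_0)$ for $s\geq 0$, which is the integrated form of condition (2).

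The main obstacle is precisely the regularity gap in this converse: the sup construction is only upper semicontinuous (Lipschitz under mild assumptions on $f$), whereas condition (2) is stated pointwise with $\partial V/\partial x$. Bridging this gap requires a smoothing step in the style of Massera or Kurzweil, typically via convolution against a mollifier combined with a weighted time-averaging $V(x_0)=\int_0^\infty \rho(t)\,\|\flow_t(x_0)-x^*\|^2\,dt$ for a carefully chosen kernel $\rho$. I would handle this by first establishing the non-smooth version (where condition (2) is replaced by ``$V\circ\flow_t$ is non-increasing''), which is morally equivalent, and then invoking a converse Lyapunov smoothing result to upgrade $V$ to $C^1$. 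This smoothing is the genuinely hard step and the one most likely to require extra hypotheses on $f$ not visible in the bare statement.
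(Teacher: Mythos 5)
Your forward direction is correct and is the standard direct analytic proof: compactness of the sphere $S_\epsilon(x^*)$ to get a positive minimum $m$ of $V$, continuity at $x^*$ to get $\delta$, and the chain rule plus condition (2) to show $V$ is non-increasing along trajectories, so no trajectory starting in $B_\delta(x^*)$ can reach the sphere. This is genuinely different from how the paper treats the statement: the paper never proves Theorem~\ref{thm:ClassicLyapunov} directly, but instead recovers it as the instantiation of the categorical Lyapunov theorem (\cref{thm:lyapunovfsys} applied to the manifold setting of \cref{ex:manifolds4}), where your chain-rule-plus-monotonicity step is abstracted into the comparison lemma (Axiom \textbf{D4}/\textbf{D4'}): the lax square $\F V \circ f_E \leq 0_R \circ V$ plays the role of $\frac{\partial V}{\partial x}\cdot f \leq 0$, and the comparison lemma converts it into $V(\phi_t(x)) \leq V(x)$, after which stability follows from the flow-level theorem of \cite{CLT1}. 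Your proof is more elementary and self-contained; the paper's buys generality across endofunctors and categories at the cost of the axiomatic scaffolding.

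On the converse, your candidate $V(x_0) = \sup_{t\geq 0}\|\phi_t(x_0)-x^*\|$ is exactly the one the paper uses (see \cref{ex:classicstability1} and \cref{thm:Lyapconvsys}), and your identification of the regularity gap is the right thing to worry about: that supremum is in general only lower/upper semicontinuous and not $C^1$, so it satisfies the integrated condition $V(\phi_t(x_0))\leq V(x_0)$ but not literally condition (2) as stated with $\partial V/\partial x$. Be aware that this gap is not merely technical for \emph{mere} stability (as opposed to asymptotic stability): the Massera--Kurzweil smoothing machinery is built for asymptotically stable equilibria, and a smooth converse for bare stability can fail without further hypotheses. The paper quietly sidesteps this by phrasing its decrescent condition at the level of flows (a lax-commuting square), never producing a differentiable $V$; so if you want to prove the theorem exactly as stated, either weaken condition (2) to the integrated form in the converse direction, or add the hypotheses needed for the smoothing step. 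As written, your converse is a correct reduction to a known (and genuinely nontrivial) smoothing result rather than a complete proof, and you have flagged that honestly.
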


In \cite{CLT1}, we initiated a program to show that this classification of stable equilibria through ``Lyapunov functions'' is purely formal, and holds in much greater generality. Therein, we began with the assumption that we know the \emph{flow} of the system, that is, the trajectory of each point is encoded in an action $\flow \maps T \times E \to E$ of a monoid $T$ representing time ($\N$ for discrete-time, $\Rplus$ for continuous-time, etc.). We introduced a categorical axiomatic framework for studying stability of systems, which we termed \define{settings for stability}. This consists of three structured objects: a monoid object $T$ for time, a measurement object $R$ with a partial order and a distinguished element $0_R$, and an object of interest/state space $E$ which has a metric valued in $R$. The notion of system in this setting is an action of the monoid $T$ on the space of interest $E$. The main result of \cite{CLT1} is that Lyapunov's theorem holds in any setting for stability, i.e., that Lyapunov morphisms classify stable equilibria, and the converse holds with mild assumptions on $R$.

\begin{thm*}[\cite{CLT1}]
    Let $T \into E \onto R$ be a setting for stability, $\flow \maps T \times E \to E$ be a flow, and $x^* \maps 1 \to E$ an equilibrium point. If there exists a Lyapunov morphism $V$ for $\flow$, then $x^*$ is a stable equilibrium point. Moreover, if $R$ admits local suprema commuting with whiskering, then for any stable equilibrium point $x^*$ there exists a Lyapunov morphism $V$ for $\flow$.
\end{thm*}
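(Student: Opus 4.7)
The plan is to treat the two directions of the biconditional separately, mirroring the classical strategy while translating each step into the categorical language of settings for stability.

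For the forward direction, assume a Lyapunov morphism $V$ exists, meaning $V$ is positive definite with respect to $x^*$ and decreases along $\flow$. The goal is that for every ``target neighborhood'' of $x^*$ -- formalized via an element of $R$ bounding the metric $E \to R$ -- one can find a starting neighborhood the flow never leaves. The idea is to select a sublevel set $\{V \leq c\}$ fitting inside the target: positive definiteness supplies a neighborhood of $x^*$ mapped by $V$ into values $\leq c$, and the decrescent inequality $V(\flow_t(x)) \leq V(x)$ keeps trajectories trapped inside this sublevel set for all $t$. Categorically, one expresses sublevel sets as pullbacks along $V$ and uses the partial-order structure on $R$ together with the action-compatibility axiom for $\flow$ to conclude.

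For the converse, assume $x^*$ is stable and set
\[V(x) \coloneq \sup_{t \in T} d(\flow_t(x), x^*),\]
where $d \maps E \to R$ denotes the metric from the setting. Positive definiteness of $V$ is inherited from $d$, since $V(x) \geq d(x,x^*)$. The decrescent property reduces to the computation $V(\flow_s(x)) = \sup_t d(\flow_{t+s}(x), x^*) \leq \sup_u d(\flow_u(x), x^*) = V(x)$, which is exactly the statement that the supremum commutes with whiskering by the monoid action $\flow$.

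The main obstacle is well-definedness of $V$ as a morphism $E \to R$: classically, stability only provides local boundedness of trajectories near $x^*$, so the supremum is a priori defined only on a neighborhood of $x^*$, and these local suprema must be glued into a global morphism valued in $R$. This is precisely what the hypothesis that $R$ admits local suprema commuting with whiskering is engineered to provide, and verifying this gluing -- together with checking the action-compatibility needed for the shift identity, and hence the decrescent inequality, to hold globally -- is where I expect the categorical subtleties to concentrate.
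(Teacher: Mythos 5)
This statement is imported from \cite{CLT1} and is not reproved in the present paper (it reappears as \cref{thm:Lyap}), but the definitions recalled in \cref{sec:flows} determine what the proof must look like, and your forward direction does not match it. The paper's notion of stability (\cref{def:stable}) is not the classical $\epsilon$--$\delta$ condition but the class-$\K$ bound $\|\flow(t,x)\|_{x^*} \leq \alpha(\|x\|_{x^*})$, and positive definiteness (\cref{def:positivedef}) is the sandwich $\underline\alpha(\|x\|_{x^*}) \leq V(x) \leq \overline\alpha(\|x\|_{x^*})$. With these definitions the forward direction is a purely formal pasting of lax squares: $\underline\alpha(\|\flow(t,x)\|_{x^*}) \leq V(\flow(t,x)) \leq V(x) \leq \overline\alpha(\|x\|_{x^*})$, whence $\|\flow(t,x)\|_{x^*} \leq \underline\alpha\inv\overline\alpha(\|x\|_{x^*})$ and $\underline\alpha\inv\overline\alpha$ is class $\K$. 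Your proposed route via sublevel sets ``expressed as pullbacks along $V$'' is a genuine gap: Axiom \textbf{S0} only guarantees finite products, not pullbacks, there is no topology or notion of neighborhood in a general setting for stability, and the target of the argument is the wrong statement of stability. The whole point of the comparison-function formulation is to make the topological trapping argument unnecessary.

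Your converse direction is essentially on track: the paper confirms (in \cref{ex:classicstability1} and the proof of \cref{thm:Lyapconvsys}) that the Lyapunov morphism is $V \coloneq \sup_{t \in T}\xnorm{\flow_t(x)}$, and your shift computation for the decrescent property is the right one. Two caveats. First, you only verify the lower bound $V(x) \geq \xnorm{x}$; positive definiteness also requires the upper bound $V(x) \leq \overline\alpha(\xnorm{x})$, and this is exactly where the stability hypothesis enters, since $\xnorm{\flow_t(x)} \leq \alpha(\xnorm{x})$ for all $t$ gives $\sup_t \xnorm{\flow_t(x)} \leq \alpha(\xnorm{x})$. Second, your reading of ``local suprema commuting with whiskering'' as a gluing device for locally defined suprema is a classical-analysis interpretation; in this framework stability already furnishes a global bound, and the hypothesis is instead what guarantees that $\sup_T$ exists as a morphism $E \to R$ in the posetal 2-categorical sense and is compatible with precomposition by $\flow$, which is precisely what your shift identity silently uses.
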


The value of Lyapunov's method lies in the characterization of a global property (stability of equilibrium points) by a local property (the existence of a function that is decreasing in the direction in which the vector field is pointing). The purpose of this paper is to complete the story by giving Lyapunov conditions at the level of systems.

To capture the local nature of Lyapunov's method for vector fields, we complement the flow story using the theory of coalgebras of an endofunctor $\F \maps \C \to \C$, which we shall refer to as \define{$\F$-systems} \cite{rutten2000universal}. Smooth vector fields on manifolds are a special case of coalgebras of the endofunctor on the category $\Man$ of smooth manifolds given by sending a space to its tangent bundle (just the total space, not the associated projection map). That is, classical dynamical systems are $\T$-systems, where $\T \maps \Man \to \Man$ is the tangent bundle functor, which are additionally required to be sections of the bundle's projection. 

In order to connect local and global properties of systems, it is necessary to characterize when a flow is a ``solution'' to a system. This requires the development of additional theory for $\F$-systems when the endofunctor $\F$ is equipped with extra structure. Given this extra structure, we establish an equivalence between flows $\flow \maps T \times E \to E$ and systems ${f}_E \maps E \to \F E$, capturing the notion of unique solutions. 

In this paper, we introduce \define{settings for dynamic stability} where the time object is equipped with an $\F$-system $1_T \maps T \to \F T$ called the \define{unit clock} (generalizing $\dot{t} = 1$), the state space is equipped with a system of interest $f_E \maps E \to \F E$ (generalizing $\dot{x} = f(x)$), and the measurement object is equipped with a simple stable system $\sigma \maps R \to \F R$ (generalizing $\dot{y} = - \alpha(y)$), essentially for the point of comparison. 
\[
\underbrace{(T,1_T)}_{\textrm{unit clock}} \into \underbrace{(E,f_E)}_{\textrm{$\F$-system}} \onto \underbrace{(R,\sigma)}_{\textrm{stable system}}
\]

The key idea of the connection between the flow story and the system story of Lyapunov theory can be summarized by the following diagram. Lyapunov's theorem essentially tells us that if the right side lax commutes, then the left side lax commutes. The converse Lyapunov theorem tells us the left side lax commutes if the right side does.
\[
\begin{tikzcd}
    T \times E 
    \ar[r, "\flow"]
    \ar[d, "\pi"']&
    E
    \ar[r, "{f}_E"]
    \ar[d, "V"description]
    &
    \F E
    \ar[d, "\F V"]
    \\
    E
    \ar[r, "V"']
    \arrow[ur, Rightarrow]
    &
    R 
    \ar[r, "{0}_R"']
    \arrow[ur, Rightarrow]
    &
    \F R 
\end{tikzcd}
\]
The lax commuting square on the left is precisely the decrescent condition in the definition of Lyapunov morphisms on flows given in \cite{CLT1}. The lax commuting square on the right in turn is the decrescent condition we ask $V$ to satisfy to be a Lyapunov morphism on systems (cf. \cref{thm:lyapunovfsys}). Coupled with the results of \cite{CLT1}, this paper establishes necessary and sufficient conditions for the stability of $\F$-systems via Lyapunov morphisms, i.e., a complete characterization of the stability of systems. 

\begin{thm*}[Generalized Lyapunov Theorem, cf. Theorems \ref{thm:lyapunovfsys} and \ref{thm:Lyapconvsys}]
    Let ${f}_E \maps E \to \F E$ be a system in a setting for dynamic stability, and let $x^* \maps 1 \to E$ be an equilibrium point. Then $x^*$ is stable if there is a morphism $V \maps E \to R$ in $\C$ such that:
    \begin{enumerate}
        \item $V$ is positive definite with respect to $x^*$
        \item the following diagram lax commutes: 
        \[
        \begin{tikzcd}
            E
            \arrow[r, "V"]
            \arrow[d, "{f}_E"']
            &
            R
            \arrow[d, "{0}_R"]
            \arrow[dl, Rightarrow]
            \\
            \F E
            \arrow[r, "\F V"']
            &
            \F R
        \end{tikzcd}
        \exmath{\frac{\partial V}{\partial x}  {f}_E(x) \leq 0}\]
    \end{enumerate}
\end{thm*}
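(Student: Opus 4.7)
The strategy is to reduce to the flow-level Lyapunov theorem of \cite{CLT1} by exploiting the equivalence, promised in the introduction, between $\F$-systems ${f}_E \maps E \to \F E$ and flows $\flow \maps T \times E \to E$ that a setting for dynamic stability provides via the unit clock $1_T \maps T \to \F T$. Stability is a property defined at the flow level, so the real content lies in showing that the system-level decrescent condition transports to the flow-level one; once it does, the flow-level theorem delivers stability immediately.

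First I would pass across the equivalence to obtain the flow $\flow$ solving ${f}_E$. The positive definiteness of $V$ is a condition on $V$ and $x^*$ alone and transfers unchanged, and $x^*$ being an equilibrium of ${f}_E$ is equivalent to $x^*$ being fixed by every $\flow_t$, which is precisely the hypothesis on the distinguished point required by the flow-level theorem.

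The key step is to deduce the flow-side lax square $V \circ \flow \Rightarrow V \circ \pi$ on $T \times E$ from the system-side lax square $\F V \circ {f}_E \Rightarrow {0}_R \circ V$ on $E$. Morally, $\F V \circ {f}_E$ plays the role of the directional derivative $\tfrac{\partial V}{\partial x}\cdot f(x)$ and the hypothesis says it is dominated by ${0}_R \circ V$, encoding ``$\dot V \leq 0$''. Whiskering on the right by $\flow$ and using that $\flow$ intertwines ${f}_E$ with the unit clock $1_T$ (the content of being the flow for ${f}_E$ under the equivalence) identifies $\F V \circ {f}_E \circ \flow$ with the time-derivative of $V \circ \flow$ computed against $1_T$. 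An ``integration along trajectories'' argument, expressed categorically against $1_T$ and the stable comparison system $\sigma \maps R \to \F R$ on the measurement object, should then upgrade this pointwise inequality to the desired 2-cell $V \circ \flow \Rightarrow V \circ \pi$, after which $V$ is a Lyapunov morphism for $\flow$ in the sense of \cite{CLT1} and the quoted flow-level theorem applies.

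The main obstacle is precisely this categorical integration step: the classical monotonicity statement that a function with non-positive derivative is non-increasing must be distilled from the interaction between the unit clock $1_T$, the stable comparison system $\sigma$, and the $\F$-system structure carried by $\flow$. This is where the extra data of a setting for dynamic stability earns its keep over a bare setting for stability, and I expect the needed monotonicity/comparison lemma to be established alongside, or just before, the flow/system equivalence itself, so that its use in this proof is essentially bookkeeping.
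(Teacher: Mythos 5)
Your proposal follows the paper's proof of \cref{thm:lyapunovfsys} in all essentials: whisker the system-level lax square along a solution flow (using that the flow is a map of systems out of $\L_E$), apply a comparison principle to convert the resulting inequality into the flow-level decrescent square, and then invoke the flow-level Lyapunov theorem of \cite{CLT1}. The ``categorical integration step'' you flag as the main obstacle is not something to be proved but is precisely Axiom \textbf{D4'} (the generalized comparison lemma) built into the definition of a monoidal setting for dynamic stability, so the argument closes exactly as you anticipated.
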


As in the case of Lyapunov morphisms on flows, we show that Lyapunov morphisms for systems are necessary for stability. This requires additional structural assumptions and corresponding machinery. When coupled with the sufficient conditions for stability, the end result is a complete characterization of the stability of systems via Lyapunov morphisms.

The axioms for a setting for dynamic stability provide a minimal list of conditions to check to guarantee that a Lyapunov theorem holds. This minimality also results in a lack of richness of the surrounding theory. Many natural notions in the classical Lyapunov theory are not expressible in the language afforded by those axioms. In \cref{sec:monoidal}, we provide a complementary set of axioms for a \emph{monoidal} setting for dynamic stability. Here, we ask for an operation which allows us to combine two system, equipping the functor $\F$ with a lax monoidal structure. Besides proving a Lyapunov theorem, these axioms allow us to define a derivative functor, an integration functor, and prove the following theorem relating them. Critical to the theory is the subcategory $\TComp \subseteq \Sys_\F$ of systems which are \emph{complete} relative to $T$ in the sense that each initial condition admits a unique solution in the form of a $T$-curve.

\begin{thm*}[Existence and Uniqueness, cf.\ \cref{thm:existenceuniqueness} and \cref{cor:monadic}]
    Assume that the unit clock $1_T \maps T \to \F T$ is itself $T$-complete. Given a $T$-complete system ${f}_E \maps E \to \F E$, there is a solution $\I {f}_E$ to ${f}_E$. This extends to a functor $\I \maps \TComp \to \TFlow$ with the action on morphisms defined trivially. There is a functor $\D \maps \TFlow \to \Sys_\F$ such that $\flow$ is a solution flow to $\D \flow$. If $\D\flow$ is $T$-complete for all $T$-flows $\flow$, then for any solution flow $\flow$ of a system ${f}_E \maps E \to \F E$, we have $\flow = \I{f}_E$. Moreover, $\I$ and $\D$ form an isomorphism of categories, and thus
    \[
    \flow = \I\D \flow, \qquad {f}_E = \D \I {f}_E. 
    \]
\end{thm*}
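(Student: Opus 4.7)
The plan is to establish the three functorial claims in turn and then assemble them into the claimed isomorphism. To construct $\I$ on objects, I would use $T$-completeness of $f_E$ to assign to each initial condition $x_0 \maps 1 \to E$ the unique $T$-curve $c_{x_0} \maps T \to E$ solving $f_E$ with $c_{x_0}(e) = x_0$, and assemble these fiberwise into a single morphism $\I f_E \maps T \times E \to E$; $T$-completeness of the unit clock $(T, 1_T)$ itself should supply the coherence needed to make this a genuine $T$-flow. The monoid-action axioms would drop out of uniqueness of solutions: $(s\cdot t) \mapsto \I f_E(s \cdot t, x_0)$ and $s \mapsto \I f_E(s, \I f_E(t, x_0))$ both solve $f_E$ with the same initial value, hence coincide. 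For the action of $\I$ on morphisms, a morphism of $\F$-systems $h \maps (E, f_E) \to (E', f_{E'})$ carries solution curves to solution curves by the compatibility $\F h \circ f_E = f_{E'} \circ h$, so uniqueness in the target forces $h \circ \I f_E = \I f_{E'} \circ (T \times h)$, and $h$ is already a $T$-flow morphism with no redefinition needed.

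Next, I would build $\D$ from the lax monoidal data on $\F$. Intuitively, given a $T$-flow $\flow$, the system $\D\flow$ records the infinitesimal action of $\flow$ by pairing the unit clock $1_T$ with the inclusion of initial conditions and pushing through $\F\flow$, using the lax structure map of $\F$ to interchange $\F$ and the monoidal product. Functoriality of $\D$ would follow from naturality of these structure maps and the definition of $T$-flow morphisms. The statement that $\flow$ is a solution flow to $\D\flow$ is then exactly the unpacking of this definition against the monoid-action property of $\flow$: for each fixed $x$, the curve $t \mapsto \flow(t, x)$ is a $T$-curve satisfying the defining lax square of $\D\flow$.

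Finally, to obtain the inverse isomorphism under the hypothesis that $\D\flow$ is $T$-complete for every $T$-flow $\flow$, I would argue both equalities from uniqueness. For $\D \I f_E = f_E$: the flow $\I f_E$ is assembled from curves that solve $f_E$ by construction, so applying the definition of $\D$ (differentiating at the identity of $T$) recovers $f_E$ on the nose. For $\I \D \flow = \flow$: the completeness hypothesis makes $\I \D \flow$ a well-defined $T$-flow solving $\D\flow$, and by the previous paragraph $\flow$ itself is a solution flow to $\D\flow$, so $T$-uniqueness forces $\I\D\flow = \flow$. The main obstacle is the construction of $\D$: making the categorical definition of ``derivative at the identity'' precise from the lax monoidal data, and showing that it genuinely deserves the name, i.e., that $\flow$ solves $\D\flow$. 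This is the step that actually draws on the monoidal axioms; the remainder is propelled by uniqueness of solutions coming from $T$-completeness.
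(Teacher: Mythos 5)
Your overall architecture is right --- build $\I$ from completeness, build $\D$ from the lax monoidal data, and let uniqueness force the two composites to be identities --- and your treatment of $\D\I f_E = f_E$ and $\I\D\flow = \flow$ matches the paper's. But there is a genuine gap at the very first step: you propose to construct $\I f_E \maps T \times E \to E$ by choosing, for each point $x_0 \maps 1 \to E$, the unique solution curve $c_{x_0} \maps T \to E$ and then ``assembling these fiberwise.'' In a general category there is no such assembly: a family of morphisms $T \to E$ indexed by the global points of $E$ does not determine a morphism $T \times E \to E$ unless the category is well-pointed, and the entire purpose of the monoidal setting (\cref{sec:monoidal}) is to drop that concreteness assumption. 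The paper avoids this by defining $T$-completeness (\cref{def:Tcomplete}) with respect to \emph{arbitrary} generalized elements $g \maps B \to A$, so that $\I f_E$ is produced in a single step by applying the universal property to $g = \id_E$ (\cref{def:integral}); likewise the flow axioms for $\I f_E$ are not checked pointwise but are transferred from a monoid structure on the representable functor $\Sys_\F(\L_{(-)}, 1_T) \cong \C(-,T)$ via the Yoneda lemma (\cref{lem:Tselfcomplete}). Your pointwise verification of associativity of the action has the same defect.

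The second, smaller gap is that you flag the construction of $\D$ and the proof that $\flow$ solves $\D\flow$ as ``the main obstacle'' but do not supply the ingredient that makes it work. The paper's \cref{lem:associatedcoalg} shows $\oplus \times \id_X \maps \L_{T \times X} \to \L_X$ is a map of systems (using associativity of the laxator, Axiom \textbf{D5}, and naturality of $\psi$ and $0$), hence $\D(\oplus \times \id_X) = \L_X$; this single identity is what drives \cref{lem:flowissolution} (every flow solves its derivative) and also what guarantees $\D$ corestricts to $\TComp$. Without it, your claim that the solution-flow property is ``exactly the unpacking of the definition against the monoid-action property'' does not go through, because the square expressing that $\flow$ is a system map $\L_E \to \D\flow$ has to be factored through $\L_{T \times E}$, not verified one trajectory at a time. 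You also do not address why a solution flow of $f_E$ must equal $\I f_E$ when only $\D\flow$ (rather than $f_E$ itself) is assumed $T$-complete; the paper routes this through the equivalence of $T$-completeness with the uniqueness property (\cref{lem:TcompEquivalent}).
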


\[\begin{tikzcd}
    &
    \TComp
    \arrow[ddl, bend left = 15, "\U"]
    \arrow[ddr, bend left = 15, "\I"]
    \\\\
    \C
    \arrow[uur, phantom, sloped, "\bot"]
    \arrow[uur, bend left = 15, "\L"]
    \arrow[rr, bend left = 10, "T \times -"]
    \arrow[rr, phantom, "\bot"]
    &&
    \TFlow
    \arrow[uul, bend left = 15, "\D"]
    \arrow[uul, phantom, sloped, "\simeq"]
    \arrow[ll, bend left = 10, "\U'"]
    \arrow[from=3-1, to=3-1, loop, in=155, out=235, distance=10mm, "\F"]
\end{tikzcd}\]

\subsection{Overview of the paper}

In \cref{sec:flows}, we quickly review the definitions and results of \cite{CLT1}, which develops the categorical framework for Lyapunov theory where systems are represented as monoid actions or flows. In particular, the definition of \emph{setting for stability} and \emph{Lyapunov morphism} therein are given, and a Lyapunov theorem for such settings is stated. 
In \cref{sec:coalg}, we give the definition of $\F$-system (i.e., $\F$-coalgebra) for an endofunctor $\F$ on a category $\C$, and give several relevant examples. 
In \cref{sec:trajectory}, we introduce the definition of \emph{setting for dynamic stability}, which is a modification of setting for stability accounting for the data of an $\F$-system rather than a flow. \cref{thm:LyapTrajectory} establishes a coalgebraic characterization of Lyapunov morphisms in concrete settings, thus giving a Lyapunov theorem for $\F$-systems. 
In \cref{sec:monoidal}, we seek to remove the dependency on categorical concreteness. We introduce \emph{monoidal settings for dynamic stability}, a further modification of the notion of setting for stability which replaces references to points in an object with general maps into the object, i.e. ``generalized elements''. The proof of the Lyapunov theorem for these settings is more elegant, and does not require the underlying category to be concrete.
In \cref{sec:existunique}, we study the existence and uniqueness of solutions of systems in a setting for dynamic stability. \cref{thm:existenceuniqueness} gives conditions under which a setting admits an existence and uniqueness result, giving an equivalence between the category of $T$-flows and the subcategory of $\F$-systems which are complete relative to $T$.
In \cref{sec:converse}, we give conditions on a monoidal setting for stability under which the converse Lyapunov theorem holds. 

\section{Reviewing Systems as Flows}
\label{sec:flows}
The present work is building on that of \cite{CLT1}. For the convenience of the reader, we begin by reviewing some of the key definitions and results therein. We also recapitulate some of the key examples from \cite{CLT1} as they will be revisited in this paper in the context of systems. At the end of this section, we give an extension of the notion of stability from points to \emph{generalized elements}.

\begin{defn}
\label{def:setting}
    A \define{setting for stability}, $T \into E \onto R$, in $\C$ consists of:
    \begin{enumerate}
    \setcounter{enumi}{-1}
        \item[\textbf{S0:}] (setting) a category $\C$ with all finite products
        \item[\textbf{S1:}] (space) an object $E \in \C$
        \item[\textbf{S2:}] (time) an object $T$ equipped with the structure of a monoid: $(T, \oplus, 0_\oplus)$
        \item[\textbf{S3:}] (stable object) a posetal object $R \in \C$ equipped with a distinguished point $0_R \maps 1 \to R$ 
        \item[\textbf{S4:}] (distance) a morphism in $\C$, denoted by $
        d \maps E \times E \to R$, such that:
        \begin{itemize}
            \item $d \To 0$, 
            \item $\ker(d) \cong \Delta \maps E \to E \times E$, with $\Delta$ the diagonal map.
        \end{itemize} 
    \end{enumerate}
\end{defn}

The notation $T \into E \onto R$ is only intended to be impressionistic. There is no short exact sequence present. When looking at the arrow $T \into E$ the reader should think of a trajectory $\flow(\cdot, x_0)$ of a point $x_0 \in E$ under the influence of the flow $\flow$, which we define in \cref{def:flow} below. When looking at the arrow $E \onto R$ the reader should think of a ``norm'' $\norm$ relative to a point $x^*$, which we define in \cref{def:norm} below.

\begin{defn}
\label{def:flow}
    A \define{$T$-flow} $\flow \maps T \times E \to E$ on an object $E$ is an action of $T$ on $E$, meaning the following diagrams commute: 
    \[
    \begin{array}{c}
    \textit{Initialization:} \\
    \begin{tikzcd}
        E
        \arrow[r, "0_\oplus \times \id_E"]
        \arrow[dr, "\id_E"']
        &
        T \times E 
        \arrow[d, "\flow"]
        \\&
        E
    \end{tikzcd}
    \\
    \excenter{\flow(0, x) = x}
    \end{array}
    \qquad\qquad
    \begin{array}{c}
    \textit{Composition:} \\
    \begin{tikzcd}
        T \times T \times E
        \arrow[r, "\oplus \times \id_E"]
        \arrow[d, "\id_T \times \flow"']
        &
        T \times E
        \arrow[d, "\flow"]
        \\
        T \times E 
        \arrow[r, "\flow"']
        &
        X
    \end{tikzcd} 
    \\
    \excenter{\flow(t_1, \flow(t_2, x)) = \flow(t_1 + t_2, x)}
    \end{array}
    \]
    A \define{morphism of $T$-flows} $p \maps \flow_X \to \flow_Y$ is a map $p \maps X \to Y$ such that the following diagram commutes.
    \[
    \begin{tikzcd}
        T \times X
        \arrow[r, "\id_T \times p"]
        \arrow[d, "\flow_X"']
        &
        T \times Y
        \arrow[d, "\flow_Y"]
        \\
        X 
        \arrow[r, "p"']
        &
        Y
    \end{tikzcd}
    \exmath{p (\flow_X(t,x))= \flow_Y(t,p(x))}
    \]
    Let $\TFlow$ denote the category of $T$-flows and morphisms of $T$-flows.
\end{defn}

\begin{defn}
\label{def:norm}
    Given a point $x^* \maps 1 \to E$, the \define{norm} $\norm \maps E \to R$ relative to $x^*$ is the following composite \[\norm \coloneq \left[E \xrightarrow{\id \times x^*} E \times E \xrightarrow{d} R\right]\]
\end{defn}

\begin{defn}
\label{def:equlibriumpoint}
    An element $x^* \maps 1 \to E$ is an \define{equilibrium point of $\flow$} if the following diagram commutes:
    \[
    \begin{tikzcd}
        T
        \arrow[r, "id_T \times x^*"]
        \arrow[d, "!"']
        &
        T \times E
        \arrow[d, "\flow"]
        \\
        1
        \arrow[r, "x^*"']
        &
        E
    \end{tikzcd}
    \exmath{\flow_t(x^*) = x^*}
    \]
\end{defn}

\begin{defn}
\label{def:classKmorphism}
    A morphism $\alpha \maps R \to R$ is \define{class $\K$} if:
    \begin{itemize}
        \item $\alpha$ is an order-preserving map 
        \item $\alpha$ has an order-preserving inverse $\alpha\inv$
        \item $\alpha \circ 0_R = 0_R$.
    \end{itemize} 
\end{defn}

\begin{defn}
\label{def:stable}
    An equilibrium point $x^* \maps 1 \to E$ is \define{stable} if there is a class $\K$ morphism $\alpha$ such that the following diagram lax commutes:  
    \[
    \begin{tikzcd}
        T \times E 
        \arrow[rr, "\flow"]
        \arrow[d, "\pi"']
        &&
        E
        \arrow[d, "\|\cdot\|_{x^*}"]
        \\
        E
        \arrow[r, "\|\cdot\|_{x^*}"']
        \arrow[urr, Rightarrow, ""]
        &
        R
        \arrow[r, "\alpha"']
        &
        R
    \end{tikzcd}
    \exmath{\| \flow(t, x) \|_{x^*} \leq \alpha(\|x\|_{x^*})}
    \]
\end{defn}

\begin{defn}
\label{def:positivedef}
    A morphism $V \maps E \to R$ is \define{positive definite} with respect to $x^*$ if there exist class $\K$ morphisms $\underline\alpha, \overline \alpha \maps R \to R$ such that the following diagram lax commutes.
    \[\begin{tikzcd}
        &
        R
        \arrow[dr, "\overline \alpha"]
        \\
        E
        \arrow[ur, "\|\cdot\|_{x^*}"]
        \arrow[rr, "V"{name = V}, ""'{name = B}]
        \arrow[dr, "\|\cdot\|_{x^*}"']
        &&
        R
        \\&
        R
        \arrow[ur, "\underline \alpha"']
        \arrow[from = 1-2, to = V, Rightarrow, ""]
        \arrow[from = B, to = 3-2, Rightarrow, ""]
    \end{tikzcd}
    \exmath{\underline{\alpha}(\| x \|_{x^*}) \leq V(x) \leq \overline{\alpha}(\| x\|_{x^*})}
    \]
\end{defn}

\begin{defn}
\label{def:lyapunov}
   A morphism $V \maps E \to R$ is called a \define{Lyapunov morphism} for $\flow$ with equilibrium point $x^* \maps 1 \to E$ if:
    \begin{enumerate}
        \item $V$ is positive definite
        \item (decrescent) the following diagram lax commutes.
        \[
        \begin{tikzcd}
            T \times E
            \arrow[r, "\flow"]
            \arrow[d, "\pi"']
            &
            E
            \arrow[d, "V"]
            \\
            E
            \arrow[r, "V"']
            \arrow[ur, Rightarrow]
            &
            R
        \end{tikzcd}
        \exmath{ V( \flow(t, x) ) \leq V(x) }
        \]
    \end{enumerate}
\end{defn}

\begin{thm}[Lyapunov theorem for flows]
\label{thm:Lyap}
    Let $T \into E \onto R$ be a setting for stability, $\flow \maps T \times E \to E$ be a flow, and $x^* \maps 1 \to E$ an equilibrium point. If there exists a Lyapunov morphism $V$ for $\flow$, then $x^*$ is a stable equilibrium point. Moreover, if $R$ has local suprema commuting with whiskering, then for each stable equilibrium point $x^*$ there exists a Lyapunov morphism $V$ for $\flow$. 
\end{thm}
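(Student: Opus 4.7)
The plan is to establish the two implications by the classical Lyapunov strategy translated into lax diagrams. For sufficiency I would paste the lax squares given by positive definiteness and the decrescent condition; for the converse I would build a Lyapunov morphism as a supremum of norms along the orbit.

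For sufficiency, suppose $V$ is a Lyapunov morphism with class $\K$ witnesses $\underline{\alpha}, \overline{\alpha}$. Since $\underline{\alpha}$ is class $\K$ it has an order-preserving inverse, and whiskering the positive-definiteness squares by $\underline{\alpha}^{-1}$ rewrites them as $\norm \leq \underline{\alpha}^{-1} \circ V$ and $\underline{\alpha}^{-1} \circ V \leq \underline{\alpha}^{-1} \circ \overline{\alpha} \circ \norm$. Vertically pasting these with the decrescent square $V \circ \flow \leq V \circ \pi$ (whiskered on the outside by $\underline{\alpha}^{-1}$) yields a single lax square witnessing
\[
\xnorm{\flow(t,x)} \;\leq\; \underline{\alpha}^{-1}\bigl(V(\flow(t,x))\bigr) \;\leq\; \underline{\alpha}^{-1}\bigl(V(x)\bigr) \;\leq\; \underline{\alpha}^{-1}\bigl(\overline{\alpha}(\xnorm{x})\bigr).
\]
Setting $\alpha \coloneq \underline{\alpha}^{-1} \circ \overline{\alpha}$ produces the class $\K$ comparison required by \cref{def:stable}, using that class $\K$ is closed under composition and inversion as observed from \cref{def:classKmorphism}.

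For the converse, assuming $x^*$ is stable via some class $\K$ map $\alpha$, I would take the candidate
\[
V \;\coloneq\; \sup_{t \in T}\bigl(\norm \circ \flow(t,-)\bigr) \maps E \to R,
\]
which exists by the local suprema hypothesis on $R$. Choosing $t = 0_\oplus$ gives $\norm \leq V$, so $\underline{\alpha} = \id_R$ witnesses the lower bound; the stability estimate $\xnorm{\flow(t,x)} \leq \alpha(\xnorm{x})$ holds uniformly in $t$, yielding $V \leq \alpha \circ \norm$, so $\overline{\alpha} = \alpha$ witnesses the upper bound. The decrescent condition follows from a monoid reindexing: $V(\flow(s,x)) = \sup_t \xnorm{\flow(t \oplus s, x)}$ is a supremum over a subset of the orbit defining $V(x)$, hence bounded above by it.

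The main obstacle, and the reason for the technical hypothesis on $R$, is that the informal supremum arguments just given must be realized as genuine lax squares in $\C$. This requires the suprema to be preserved by precomposition with morphisms such as $-\oplus s \maps T \to T$, with the norm, and with the flow action; exactly this compatibility is what \emph{local suprema commuting with whiskering} is meant to encode. I expect the bulk of the converse proof to consist of verifying these whiskering identities at the level of the abstract category, after which positive definiteness and the decrescent lax square fall out diagrammatically from the unquantified arguments above.
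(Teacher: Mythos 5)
Your proposal is correct and follows essentially the same route as the paper: this theorem is stated here as a review and its proof is deferred to \cite{CLT1}, but the sufficiency argument via the chain $\xnorm{\flow(t,x)} \leq \underline{\alpha}^{-1}(V(\flow(t,x))) \leq \underline{\alpha}^{-1}(V(x)) \leq \underline{\alpha}^{-1}(\overline{\alpha}(\xnorm{x}))$ with $\alpha = \underline{\alpha}^{-1}\circ\overline{\alpha}$, and the converse via $V = \sup_{t\in T} \xnorm{\flow_t(\cdot)}$, are exactly the arguments the paper points to (cf.\ the formula for $V$ quoted in \cref{ex:classicstability1} and its reuse in the proof of \cref{thm:Lyapconvsys}). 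You also correctly identify that the role of \emph{local suprema commuting with whiskering} is precisely to make the supremum a genuine morphism whose defining property survives precomposition along $\oplus$ and the flow.
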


The precise definition of \emph{local suprema commuting with whiskering} can be found in \cite{CLT1}.

\begin{example}[Smooth Flows on Manifolds]
\label{ex:manifolds1}
Consider the category $\Man$ of smooth manifolds with boundary and smooth maps.  This yields a setting for stability: 
$$
\Rplus \into M \onto \Rplus
$$
in $\Man$ where: 
\begin{enumerate}
\setcounter{enumi}{-1}
    \item[\textbf{S0:}] (setting) $\Man$.
    \item[\textbf{S1:}] (space) a smooth manifold $M$ with empty boundary.
    \item[\textbf{S2:}] (time) $\Rplus$ with monoid structure given by addition $+$ and unit $0$.
    \item[\textbf{S3:}] (stable object) $\Rplus$ with point $0 \in \Rplus$ and posetal structure: $f \To g$, if $f(x) \geq g(x)$ for all $x \in M$. 
    \item[\textbf{S4:}] (distance) The metric $d \maps M \times M   \to   \Rplus$ obtained from the Riemannian metric $g_p \maps T_p M \times T_p M \to \R$ (which every smooth manifold admits) \cite{abraham2012manifolds}. 
\end{enumerate}
In this setting for stability, flows are smooth maps: $\flow \maps \Rplus \times M \to M$.  Per Definition \ref{def:equlibriumpoint}, an equilibrium point $x^*$ satisfies: $\flow_t(x^*) = x^*$.  The norm is $\xnorm{x} = d(x,x^*)$.  

Lyapunov's Theorem on flows states that $V \maps M \to \Rplus$ is a Lyapunov function if it is a smooth function satisfying: 
\begin{itemize}
    \item[(1)] $V$ is positive definite: there exists smooth class $\K$ functions $\underline{\alpha},\overline{\alpha}$ such that:  
\[
\underline{\alpha}(\| x \|_{x^*} ) \leq V(x) \leq 
\overline{\alpha}( \| x \|_{x^*} )
\]
\item[(2)] $V$ is decrescent along the flow: 
$V(\flow_t(x)) \leq V(x)$
\end{itemize}
These two conditions imply by Theorem \ref{thm:Lyap} that $x^*$ is a stable equilibrium point of the flow $\flow$, i.e., per Definition \ref{def:stable} if: 
\[
\| \flow_t(x) \|_{x^*} \leq \alpha(\| x \|_{x^*})
\]
for a class $\K$ function $\alpha$. Note that this agrees with the ``classical'' notion of stability:  for all $\epsilon > 0$, there exists a $\delta > 0$ such that: 
    \[
    \| x \|_{x^*} < \delta \qquad \mathrm{implies} \qquad 
    \| \flow_t(x) \|_{x^*} < \epsilon  \quad \forall t \in \Rplus
    \]
\end{example}

\begin{example}[Classical Stability]
\label{ex:classicstability1}
Consider an ODE $\dot{x} = f(x)$ with $x \in E$ and $E$ and open subset of $\mathbb{R}^n$. Under suitable assumptions, the solution to the ODE determines a flow:
\begin{eqnarray}
    \label{eqn:flow}
    \flow \maps \mathbb{R}_{\geq 0} \times E  \to E
\end{eqnarray}
satisfying $\dot{\flow}_t(x) = f(\flow_t(x))$, where $x \in E$ is the initial condition. 
To study the stability of this flow, we can consider a setting for stability in $\Set$: \[\Rplus \into E \onto \Rplus
\]
Even with this change of ambient category, we can use the same setting for stability as in \cref{ex:manifolds1} with the exception that $\textbf{S4}$ becomes: 
\begin{enumerate}
\setcounter{enumi}{-1}
     \item[\textbf{S4:}] (distance) the morphism:
    \begin{eqnarray}
        d \maps E \times E  & \to &  \Rplus \\
        (x,y) & \mapsto &  \| x - y \| \nonumber
    \end{eqnarray}
    where $\|\cdot\|$ is any (classical) norm in $\mathbb{R}^n$.
\end{enumerate}
We again obtain that an equilibrium point of the flow $\flow$ is stable if there exists a Lyapunov function that is positive definite, $\underline{\alpha}(\| x \|_{x^*} ) \leq V(x) \leq 
\overline{\alpha}( \| x \|_{x^*} )$, and decrescent: $V(\flow_t(x)) \leq V(x)$.  
Conversely, per Theorem \ref{thm:Lyap}, if $x^*$ is a stable equilibrium point of the flow $\flow$ then there exists a Lyapunov function. Namely: 
\[
V(x) \coloneq \sup_{t \in \Rplus} \xnorm{\flow_t(x)}.
\]
\end{example}

\begin{example}[Discrete-Time Stability]
\label{ex:discrete1}
    Consider a discrete-time dynamical system $x_{k+1} = F(x_k)$ with $x_k \in E \subset \mathbb{R}^n$ and $F \maps E \to E$. This results in a flow: 
    \begin{eqnarray}
    \label{eqn:discretetime}
        \flow \maps \N \times E & \to & E \\
        (k,x) & \mapsto & F^k(x) \nonumber
    \end{eqnarray}
    This leads to the system of stability: $\N \into E \onto \mathbb{R}_{\geq 0}$ defined by: 
    \begin{enumerate}
    \setcounter{enumi}{-1}
        \item[\textbf{S0:}] (setting) the category $\Set$, with terminal object $1 = \{*\}$. 
        \item[\textbf{S1:}] (space)  the space of interest is the set $E \subset \mathbb{R}^n \in \Set$.
        \item[\textbf{S2:}] (time) the object $\N$ is a monoid with $+$ addition and $0_\oplus = 0$. 
        \item[\textbf{S3:}] (stable object) the set $\mathbb{R}_{\geq 0}$ of the non-negative reals as in Examples \ref{ex:manifolds1} and \ref{ex:classicstability1}. 
        \item[\textbf{S4:}] (distance) the morphism $d \maps E \times E \to R$ is as in Example \ref{ex:classicstability1}. 
    \end{enumerate}
    An equilibrium point $x^*$ of the flow $\flow$ is a fixed point: $F(x^*) = x^*$. As a result, $x^*$ is stable if: $
    \| \flow_k(x) \|_{x^*} = \| F^k(x) \|_{x^*}  \leq \alpha(\| x \|_{x^*})
    $.  Per Theorem \ref{thm:Lyap}, stability of an equilibrium point $x^*$ follows from the existence of a Lyapunov function satisfying: 
    \begin{enumerate}
        \item $\underline{\alpha}(\| x \|_{x^*} ) \leq V(x) \leq \overline{\alpha}( \| x \|_{x^*} )$
        \item $V(\flow_k(x)) \leq V(x)$. 
    \end{enumerate}
    The second condition (for $k = 1$) gives the classic Lyapunov condition on a discrete-time dynamical system: 
    \[
    \nabla V(x) \coloneq V(F(x)) - V(x) \leq 0. 
    \]
    Therefore, the classic Lyapunov conditions for discrete-time dynamical systems are recovered. 
\end{example}

When working categorically, it is often necessary to avoid referring to the ``points'' of an object, as many categories have objects which are not composed of points, or at least not only composed of points. In a category with a terminal object, a map of the form $1 \to X$ can be thought of as a point in $X$. There are many categories where this fails to capture anything useful, e.g. the category of groups. The term ``generalized element'' is used to refer any map in a category $f \maps A \to E$. This notion is also a generalization of subobjects, but with even weaker conditions as we are not asking the map to be monic. 

\begin{defn}
    Let $\flow \maps T \times E \to E$ be a $T$-flow. A generalized element $x \maps A \to E$ is an \define{equilibrium} if the following diagram commutes.
    \[\begin{tikzcd}
        T \times A
        \arrow[r, "\id \times x"]
        \arrow[d, "\pi_A"']
        &
        T \times E
        \arrow[d, "\flow"]
        \\
        A
        \arrow[r, "x"']
        &
        E
    \end{tikzcd}\]
\end{defn}

In the classical setting, this corresponds to a subset consisting entirely of fixed points. This is not as interesting as a forward-invariant subset.

\begin{defn}
    Let $\flow \maps T \times E \to E$ be a $T$-flow. A generalized element $x \maps A \to E$ is \define{forward-invariant} if there exists a map $g \maps T \times A \to A$ such that the following diagram commutes.
    \[\begin{tikzcd}
        T \times A
        \arrow[r, "\id \times x"]
        \arrow[d, "g"']
        &
        T \times E
        \arrow[d, "\flow"]
        \\
        A
        \arrow[r, "x"']
        &
        E
    \end{tikzcd}\]
\end{defn}

Note that an equilibrium is forward-invariant with $g = \pi_A$. Also note that we are not explicitly asking for $g$ to be a $T$-flow on $E$. 

\begin{defn}
    Let $x \maps A \to E$ be a generalized element. The \define{norm to $x$}, $\|\cdot\|_x \maps E \to R$ is defined to be the infimum, i.e., the left Kan extension depicted below.
    \[
    \begin{tikzcd}
        E
        \arrow[rrd, "\|\cdot\|_x", bend left = 15, ""'{name = A}]
        \\
        E \times A
        \arrow[u, "\pi_E"]
        \arrow[r, "\id \times x"', ""{name = B}]
        &
        E \times E
        \arrow[r, "d"']
        &
        R
        \arrow[from = B, to = A, Rightarrow]
    \end{tikzcd}
    \]
    In the classical setting this is $\|y\|_x := \inf_{a \in A} \|y-x(a)\|$. The diagram above says that $\|y\|_x \leq \|y-x(a)\|$ for all $a \in A$, and being the infimum is just asking this to be the smallest such function. 
\end{defn}

With this definition of norm relative to a generalized element $x$, we may define stability of $x$, positive definiteness of a morphism $V \maps E \to R$ relative to $x$, and Lyapunov morphisms by using \cref{def:stable}, \cref{def:positivedef}, and \cref{def:lyapunov} respectively, with essentially no changes. Similarly, the proof of the (sufficient) Lyapunov theorem given in \cite{CLT1} gives the following generalization.

\begin{thm}
    Let $T \into E \onto R$ be a setting for stability, $\flow \maps T \times E \to E$ be a flow, and $x^* \maps A \to E$ be a generalized element. If there exists a Lyapunov morphism $V$, then $x^*$ is stable.
\end{thm}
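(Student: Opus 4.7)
The plan is to replay the sufficient direction of \cref{thm:Lyap} from \cite{CLT1} verbatim, observing that the pasting argument there operates only on 2-cells between morphisms landing in $R$ and never inspects the domain of $x^*$. In particular, the extension from $x^* \maps 1 \to E$ to a generalized element $x^* \maps A \to E$ requires no structural modification.

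First, I would set $\alpha \coloneq \underline{\alpha}\inv \circ \overline{\alpha}$, where $\underline{\alpha}, \overline{\alpha} \maps R \to R$ are the class $\K$ morphisms witnessing that $V$ is positive definite with respect to $x^*$. Since class $\K$ morphisms are closed under composition and under order-preserving inversion, $\alpha$ is class $\K$.

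Next, I would paste together three lax commuting diagrams, whiskered so as to live over $T \times E$. Whiskering the upper positive-definite triangle by $\pi$ yields $V \circ \pi \leq \overline{\alpha} \circ \|\cdot\|_{x^*} \circ \pi$; whiskering the lower positive-definite triangle by $\flow$ yields $\underline{\alpha} \circ \|\cdot\|_{x^*} \circ \flow \leq V \circ \flow$; and the decrescent condition itself gives $V \circ \flow \leq V \circ \pi$. Chaining these produces
\[\underline{\alpha} \circ \|\cdot\|_{x^*} \circ \flow \;\leq\; \overline{\alpha} \circ \|\cdot\|_{x^*} \circ \pi.\]
Finally, whiskering on the left by the order-preserving $\underline{\alpha}\inv$ and using $\underline{\alpha}\inv \circ \underline{\alpha} = \id_R$ yields
\[\|\cdot\|_{x^*} \circ \flow \;\leq\; \alpha \circ \|\cdot\|_{x^*} \circ \pi,\]
which is precisely the stability condition of \cref{def:stable} applied to the generalized element $x^*$.

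I do not expect a genuine obstacle. The only subtlety worth flagging is that $\|\cdot\|_{x^*}$ is now defined as a left Kan extension rather than as the explicit composite $d \circ (\id \times x^*)$, but the pasting argument above uses only that $\|\cdot\|_{x^*}$ is a morphism $E \to R$ witnessing the two positive-definiteness 2-cells; its internal construction is never unpacked. Consequently the proof goes through identically to the case $A = 1$, and indeed no hypothesis of equilibrium or even of forward-invariance on $x^*$ is required for the sufficient direction.
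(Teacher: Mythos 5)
Your proposal is correct and matches the paper's intent exactly: the paper's own ``proof'' is precisely the observation that the sufficiency argument of \cref{thm:Lyap} from \cite{CLT1} never unpacks the construction of $\|\cdot\|_{x^*}$ and therefore applies verbatim to a generalized element $x^* \maps A \to E$. Your explicit pasting of the two positive-definiteness $2$-cells with the decrescent square, followed by whiskering with $\underline{\alpha}\inv$ to produce $\alpha = \underline{\alpha}\inv \circ \overline{\alpha}$, is exactly that argument, and your remark that no equilibrium or forward-invariance hypothesis is needed is likewise consistent with the statement.
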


\section{Systems as Coalgebras}
\label{sec:coalg}

In this paper, we take the view that systems can be represented by coalgebras.  This is a well-understood framework for systems---from labeled transition systems \cite{rutten2000universal} to dynamical, control and hybrid systems \cite{tabuada2009verification}.  

\begin{defn}
Let $\F \maps \C \to \C$ be an endofunctor. The category of $\F$-systems, denoted by $\Sys_\F$, is given by:
\begin{enumerate}
    \item (objects) \define{$\F$-systems}, consisting of a pair $(E, {f}_E)$ where $E \in \C$ is the object of \define{states} and ${f}_E \maps E \to F(E)$ in $\C$ is the \define{dynamics}. 
    \item (morphisms) A \define{morphism of $\F$-systems} $g \maps {f}_E \to {f}_{E'}$ is a morphism $g \maps E \to E'$ in $\C$ such that the following diagram commutes.
    \begin{equation}
    \label{eqn:fsystemmorphism}
    \begin{tikzcd}
        \F E
        \arrow[r, "\F g"]
        &
        \F E'
        \\
        E
        \arrow[u, "{f}_E"]
        \arrow[r, "g"']
        &
        E'
        \arrow[u, "{f}_{E'}"']
    \end{tikzcd}
    \end{equation}
\end{enumerate}
\end{defn}

\begin{rmk}
\label{rmk:functtocoalgisnat}
    Note that $\F$-systems are typically called \define{$\F$-coalgebras} \cite{rutten2000universal}. There is also the dual notion of an $\F$-algebra with the arrows reversed. We also note that category of $\F$-systems is a subcategory of the comma category $(\id_{\C} \downarrow \F)$ associated with the following diagram.
    \[\C \xrightarrow{\id_\C} \C \xleftarrow{\F} \C\]
    In fact, $\Sys_\F$ satisfies its own 2-dimensional universal property. The category $\Sys_\F$ admits a forgetful functor $U \maps \Sys_\F \to \C$ which sends a system $ f \maps E \to \F E$ to its underlying $\C$-object $E$, and there is a natural transformation $\eta \maps U \To \F \circ U \maps \Sys_\F \to \C$ the component of which corresponding to an object $ f \in \Sys_\F$ is the map $ f \maps E \to \F E$ itself.
    These data form the universal oplax cone of the \emph{oplax equalizer} of the parallel pair of functors $\id_\C$ and $\F$. 
    \[\begin{tikzcd}
        \Sys_\F
        \arrow[r, "U"]
        &
        \C
        \arrow[rr, "\id_\C", ""'{name = A}, bend left]
        \arrow[rr, "\F"', ""{name = B}, bend right]
        &{}&
        \C
        \arrow[from = A, to = B, Rightarrow, "\eta"]
    \end{tikzcd}\]
    This means that for any category $\mathsf D$, functor $Q \maps \mathsf D \to \C$ and natural transformation $\varepsilon \maps Q \To \F \circ Q$, there is a unique functor $\overline \varepsilon \maps \mathsf D \to \Sys_\F$ such that $U \circ \overline \varepsilon = Q$ and 
    \[\begin{tikzcd}
        &&
        \C
        \arrow[dr, "\id_\C"]
        \arrow[dd, Rightarrow, "\eta"]
        \\
        \mathsf D
        \arrow[urr, bend left = 15, "Q"]
        \arrow[r, "\overline \varepsilon"description]
        \arrow[drr, bend right = 15, "Q"']
        &
        \Sys_F
        \arrow[ur, "U"]
        \arrow[dr, "U"']
        &&
        \C
        \\&&
        \C
        \arrow[ur, "\F"']
    \end{tikzcd}
    \qquad=\qquad\begin{tikzcd}
        &
        \C
        \arrow[dr, "\id_\C"]
        \arrow[dd, Rightarrow, "\varepsilon"]
        \\
        \mathsf D
        \arrow[ur, "Q"]
        \arrow[dr, "Q"']
        &&\C
        \\&
        \C
        \arrow[ur, "\F"']
    \end{tikzcd}\]
    In summary, any functor $\overline \varepsilon$ into $\Sys_\F$ is uniquely determined by a functor into $\C$ paired with a natural transformation $\varepsilon \maps Q \To \F \circ Q$. 
\end{rmk}

\begin{example}[Vector Fields on Manifolds]
\label{ex:manifolds2}
    Let $\Man$ be the category of smooth manifolds with boundary and smooth maps. Then the \emph{tangent bundle functor} $\T \maps \Man \to \Man$ associates to a manifold its tangent bundle. Importantly, associated with this functor is the canonical projection $\pi \maps \T \To \id_\Man$ which has components $\pi_M \maps \T(M) \to M$. A \emph{smooth dynamical system} is a $\T$-system ${f} \maps M \to \T M$ such that $\pi \circ {f} = \id_M$. Dynamical systems as $\T$-coalgebras have been studied before, for example in \cite{Jubin} and \cite{Pavlovic_Fauser}. The category of smooth dynamical systems, $\Dyn$, is a full subcategory of $\Sys_\T$. As this category is of central importance, we explicitly note that the category $\Dyn$ has as: 
    \begin{itemize}
        \item (objects) smooth dynamical systems ${f} \maps M \to \T M$. 
        \item (morphisms)  $H \maps (M, {f}) \to (N, {g})$, where $H \maps M \to N$ is a smooth map between manifolds such that the following diagram commutes: 
        \begin{equation}
        \label{eqn:dynmorphism}
        \begin{tikzcd}
            \T M
            \arrow[r, "\T H"]
            &
            \T N
            \\
            M
            \arrow[u, " f"]
            \arrow[r, "H"']
            &
            N
            \arrow[u, " g"']
        \end{tikzcd}
        \end{equation}
        where $\T H(x, v) = (H(x), DH(x) v )$ is the \emph{tangent map}. Therefore, there is a morphism between dynamical systems if they are \emph{H-related}.

    \end{itemize}
\end{example}

\begin{example}[Discrete-Time Dynamical Systems]
\label{ex:discrete2}
    Let $\C$ be any category. A coalgebra of the identity functor $\C \to \C$ is just an object equipped with an endomorphism, ${f} \maps X \to X$, i.e., a discrete-time dynamical system in $\C$. In particular, $\C = \Set$ in \cref{ex:discrete1}. If $\C$ has finite products, countable coproducts, and products distribute over coproducts, then we can construction the $\C$-natural numbers as $\N_\C \coloneq \coprod_{n \in \N} 1$, which inherits a monoid structure from $\N$ in the obvious way. Notice that our assumption of the distributive property gives us \[\N_\C \times X \cong \left(\coprod_{n \in \N} 1\right) \times X \cong \coprod_{n \in \N} X.\] In this case, we obtain a flow $\flow \maps \N_\C \times X \to X$ from the universal property of the coproduct $\coprod_\N X$ as the copairing of countably many copies of $ f$. 
\end{example}

\begin{example}[Graphs]
\label{ex:graphs}
    Consider the covariant power set functor $\P \maps \Set \to \Set$. A $\P$-coalgebra consists of a set $X$ along with a map $ f \maps X \to \P(X)$. This amounts to the data of a binary relation $X$ where $x \in X$ gets mapped to the set of all points to which $x$ is related. We may interpret such a relation as a directed graph with $X$ as the set of vertices. Any ordered pair of vertices may have either one or no edges between them. 
\end{example}

\begin{example}[Labeled Transition Systems]
\label{ex:LTS1}
    Fix a set $A \in \Set$. An \define{$A$-labeled transition system} is a coalgebra of the composite functor $\P(A \times -) \maps \Set \to \Set$. Similarly to \cref{ex:graphs}, a labeled transition system (LTS) $ f \maps X \to \P(A \times X)$ can be interpreted as a sort of graph, where the set of vertices is $X$, and if $(a,y) \in  f(x) \subseteq A \times X$, then we say there is an edge $x \xrightarrow ay$ with label $a \in A$. 
\end{example}

\begin{example}[Markov kernels]
\label{ex:MarkovKernels1}
    Let $\Meas$ denote the category of measurable spaces. Let $D \maps \Meas \to \Meas$ denote the functor which sends a measurable space $(X, \Sigma)$ to the measurable space of probability distributions on $(X,\Sigma)$, known as the \define{Giry monad} \cite{Giry}. A $D$-system $ f \maps X \to D(X)$ is a Markov kernel.
\end{example}

\begin{example}
    Let $\C$ be a category with a terminal object $1$, and consider the constant functor $\Delta 1 \maps \C \to \C$. It holds essentially by definition that $\Sys_{\Delta1} \simeq \C$. 
\end{example}

\section{Lyapunov's Theorem on Trajectories}
\label{sec:trajectory}

A setting for stability as in \cref{def:setting} can be thought of as the ``base space'' on which stability can be defined. This is restricted by the fact that it only treats a system as its ``behavior'', the flow you get after solving a system of differential equations for instance. In order to capture the true utility of Lyapunov's theorem, this framework must be extended to include ``dynamics'', or the ``specification'' of a system. 

In this paper, we use $\F$-systems as our notion of system specification. In this section we give our first pass at settings for dynamic stability, and prove a relevant Lyapunov theorem. This prove depends on the concreteness of the underlying category of the setting, that is, the similarity of $\C$ to $\Set$. In particular, in a concrete category, a flow $\flow \maps T \times E \to E$ can be studied through its trajectories $\flow(-,x_0) \maps T \to E$. This is due to the fact that in a concrete category, an object is essentially composed of points. In future sections, we give a refinement of the following definition which is more conceptually robust and does not depend on concreteness. 

\begin{defn}
\label{def:settingdynamicstabilty}
    A \define{setting for dynamic stability}, $(T,1_T) \into (E,f_E) \onto (R,\sigma)$, in a category of $\F$-systems $\Sys_{\F}$ consists of: 
    \begin{enumerate}
    \setcounter{enumi}{-1}
        \item[\textbf{D0:}] (setting) a setting for stability $T \into E \onto R$ in $\C$
        \item[\textbf{D1:}] (unit clock) An $\F$-system on the time object ${1}_T \maps T \to \F T$.
        \item[\textbf{D2:}] (stable system) $R$ is equipped with an $\F$-system ${\sigma} \maps R \to \F R$
        \item[\textbf{D3:}] (posetal structure) $\F R$ is a posetal object
        \item[\textbf{D4:}] (comparison lemma) the diagram on the left lax commuting implies that the diagram on the right lax commutes: 
        \[
        \begin{tikzcd}
            \F T
            \arrow[r, "\F \gamma"]
            &
            \F R
            \\
            T
            \arrow[u, "{1}_T"]
            \arrow[r, "\gamma"']
            &
            R
            \arrow[u, "{\sigma}"']
            \arrow[ul, Rightarrow, ""]
        \end{tikzcd}
        \qquad \mathrm{implies} \qquad 
        \begin{tikzcd}
            T 
            \arrow[r,bend left=35pt,"\gamma_0", ""'{name = A}]
            \arrow[r,bend right=35pt,"\gamma"', ""{name = B}]
            &
            R
            \arrow[from = A, to = B, Rightarrow]
        \end{tikzcd}
        \]
        \[
        \excenter{ \quad \dot{\gamma}(t) \leq {\sigma}(\gamma(t)) \quad \qquad  \textrm{implies} \qquad  \gamma(t) \leq \gamma(0)}
        \]
         where $\gamma_0 := [T \xrightarrow{!} 1 \xrightarrow{0_T} T \xrightarrow{\gamma} R ]$ with $0_T$ the monoid unit for $T$.
    \end{enumerate}
\end{defn}

\begin{rmk}
    In \cref{lem:Tselfcomplete} below, we show that under certain conditions, the unit clock system in Axiom \textbf{D1} generates and is generated by the monoid structure on $T$ leveraged in the context of flows. 
\end{rmk}

\begin{defn}
\label{def:solntrajectory}
    A map $c \maps T \to E$ is called a \define{trajectory} for $ f_E$ if it is a map of systems $c \maps {1}_T \to {f}_E$, i.e., if the following diagram commutes.
    \[\begin{tikzcd}
        \F T
        \arrow[r, "\F c"]
        &
        \F E
        \\
        T
        \arrow[u, "{1}_T"]
        \arrow[r, "c"']
        &
        E
        \arrow[u, " f_E"']
    \end{tikzcd}\]
    A $T$-flow $\flow \maps T \times E \to E$ is a \define{trajectory flow} for the $\F$-system ${f}_E \maps E \to \F E$ if for each initial condition $x_0 \maps 1 \to E$ the composite:
    \[
    \flow(\cdot,x_0) := \left[T \xrightarrow{\id \times x_0} T \times E \xrightarrow \flow E \right]
    \]
    is a trajectory.
\end{defn}

An equilibrium point $x^* \maps 1 \to E$, cf.\ Definition \ref{def:equlibriumpoint}, can be equivalently stated for trajectories via the following commutative diagram: 
\begin{eqnarray}
\label{def:equilibrium2}
    \hspace{2cm} 
    & 
    \begin{tikzcd}
    T
    \arrow[d, "!"']
    \arrow[dr, "\flow(\cdot{,}x^*)"]
    \\
    1
    \arrow[r, "x^*"']
    &
    E
    \end{tikzcd}
    & 
    \hspace{2cm} \ex{\flow(t,x^*) = x^*}
\end{eqnarray}
The stability of equilibrium points and Lyapunov morphisms, per \cref{def:stable} and \cref{def:lyapunov} can be certified for trajectories $\flow$ of an $\F$-system $f_M \maps M \to \F M$ as follows. 

\begin{defn}
    Recall that an object $R \in \C$ is \define{posetal} if for each object $A \in \C$ the hom-set $\C(A, R)$ is equipped with a partial order such that for any map $f \maps A \to B$, the induced map $f^* \maps \C(B,R) \to \C(A,R)$ is order-preserving \cite{CLT1}. A posetal structure on an object $R$ is said to be \define{induced pointwise} if for $f,g \maps A \to R$ if $fx\leq gx$ for all $x \maps 1 \to A$, then $f \leq g$.
\end{defn}

\begin{thm}[Lyapunov's theorem on trajectories]
\label{thm:LyapTrajectory}
    Let $(T,{1}_T) \into (E, f_E) \onto (R, {\sigma})$ be a setting for dynamic stability in a category $\C$ with the posetal structure on $R$ pointwise induced, and $\flow \maps T \times E \to E$ a trajectory. If $V \maps E \to R$ is a morphism such that: 
    \begin{enumerate}[(i)]
        \item $V$ is positive definite: 
        \[\begin{tikzcd}
            &
            R
            \arrow[dr, "\overline \alpha"]
            \\
            E
            \arrow[ur, "\|\cdot\|_{x^*}"]
            \arrow[rr, "V"{name = V}, ""'{name = B}]
            \arrow[dr, "\|\cdot\|_{x^*}"']
            &&
            R
            \\&
            R
            \arrow[ur, "\underline \alpha"']
            \arrow[from = 1-2, to = V, Rightarrow, ""]
            \arrow[from = B, to = 3-2, Rightarrow, ""]
        \end{tikzcd}\]
        \item the following diagram lax commutes: 
        \begin{equation}
        \label{eq:SysDecrescent}
        \begin{tikzcd}
            \F E
            \arrow[r, "\F V"]
            &
            \F R
            \\
            E
            \arrow[u, " f_E"]
            \arrow[r, "V"']
            &
            R
            \arrow[u, "{\sigma}"']
            \arrow[ul, Rightarrow, ""]
        \end{tikzcd}
        \end{equation}
    \end{enumerate}
    then $V \maps E \to R$ is a Lyapunov morphism for $\flow$, and thus $x^*\maps 1 \to E$ is stable. 
\end{thm}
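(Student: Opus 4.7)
The plan is to reduce the claim to the flow-level Lyapunov theorem (Theorem \ref{thm:Lyap}). Since positive definiteness is assumed, it suffices to verify the decrescent square for $V$ along the flow $\flow$, i.e., to show $V \circ \flow \leq V \circ \pi$ as maps $T \times E \to R$. Once established, $V$ satisfies Definition \ref{def:lyapunov} and Theorem \ref{thm:Lyap} delivers stability of $x^*$.

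To check the inequality $V \circ \flow \leq V \circ \pi$, I would exploit the hypothesis that the posetal structure on $R$ is induced pointwise. It then suffices to verify, for each point $(t, x_0) \maps 1 \to T \times E$, that $V(\flow(t, x_0)) \leq V(x_0)$. So fix an initial condition $x_0 \maps 1 \to E$ and consider the associated trajectory $c \coloneq \flow(\cdot, x_0) \maps T \to E$. By the assumption that $\flow$ is a trajectory flow (Definition \ref{def:solntrajectory}), $c$ is a morphism of $\F$-systems from the unit clock $(T, 1_T)$ to $(E, f_E)$, giving the identity $\F c \circ 1_T = f_E \circ c$.

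The key step is then to produce the hypothesis of the comparison lemma (axiom \textbf{D4}) for $\gamma \coloneq V \circ c \maps T \to R$. Using functoriality of $\F$ and the trajectory equation, we have
\[
\F \gamma \circ 1_T \;=\; \F V \circ \F c \circ 1_T \;=\; \F V \circ f_E \circ c.
\]
The system-level decrescent hypothesis \eqref{eq:SysDecrescent} gives $\F V \circ f_E \leq \sigma \circ V$, and precomposing with $c$ preserves this inequality because the posetal structure on $R$ is functorial in $\C$. Hence $\F \gamma \circ 1_T \leq \sigma \circ \gamma$, which is exactly the lax commuting square appearing in the hypothesis of \textbf{D4}.

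Finally, applying the comparison lemma yields $\gamma \leq \gamma_0$, i.e., $V(c(t)) \leq V(c(0_T))$ for each $t$. Combining this with the initialization axiom $\flow(0_T, x_0) = x_0$ for the flow, we conclude $V(\flow(t, x_0)) \leq V(x_0)$ for every $(t, x_0)$. The pointwise-induced posetal structure then promotes this to the global inequality $V \circ \flow \leq V \circ \pi$, giving the decrescent condition required by Definition \ref{def:lyapunov}. The main obstacle I expect to encounter is the step translating the system-level inequality into a trajectory-level inequality of the form required by \textbf{D4}; this is where the interplay between functoriality of $\F$, the trajectory equation, and compatibility of the order with whiskering all have to line up correctly, and it is the only place where the concreteness assumption (pointwise-induced order on $R$) is essential.
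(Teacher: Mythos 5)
Your proposal is correct and follows essentially the same route as the paper's proof: paste the trajectory equation $\F c \circ 1_T = f_E \circ c$ with the system-level decrescent square to obtain the hypothesis of Axiom \textbf{D4} for $\gamma = V \circ \flow(\cdot,x_0)$, apply the comparison lemma together with the flow's initialization axiom, and use the pointwise-induced order to globalize over $x_0$. The only cosmetic differences are that the paper presents the whiskering step as a pasting of diagrams rather than equationally, and that the order preserved under precomposition by $c$ is the posetal structure on $\F R$ (Axiom \textbf{D3}) rather than on $R$.
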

\begin{proof}
    Since we assume $V$ is positive definite, we need only show that it is decrescent with respect to $\flow$. Let $x_0 \maps 1 \to X$. Consider the following pasting diagram. 
    \[\begin{tikzcd}[column sep = small]
        \F T
        \arrow[rr, "\F(\flow(\cdot{,}x_0))", outer sep = 5]
        &&
        \F E
        \arrow[rr, "\F V"]
        &&
        \F R
        \\
        T
        \arrow[u, "{1}_T"]
        \arrow[rr, "\flow(\cdot{,}x_0)"']
        &&
        E
        \arrow[u, "f_E"]
        \arrow[rr, "V"']
        &&
        R
        \arrow[u, "{\sigma}"]
        \arrow[ull, Rightarrow]
    \end{tikzcd}\]
    Letting $\gamma = V \circ \flow(\cdot{,}x_0)$, we conclude that the following square commutes by Axiom \textbf{D4} in \cref{def:settingdynamicstabilty}.
    \[
    \begin{tikzcd}[column sep = large]
        T
        \arrow[r, "~0_T"]
        \arrow[d, "\flow(\cdot{,}x_0)"']
        \arrow[rr,bend left=45pt,"\flow(0_T{,}x_0)"]
        &
        T
        \arrow[r, "\flow(\cdot{,}x_0)"]
        &
        E
        \arrow[d, "V"]
        \arrow[dll, Rightarrow]
        \\
        E
        \arrow[rr, "V"']
        &&
        R
    \end{tikzcd}
    \]
    This is equivalent to the following diagram lax commuting since $\flow$ is a $T$-flow. 
       \[
    \begin{tikzcd}[column sep = large]
        &
        T\times E
        \arrow[dr,"\pi"]
        &
        \\
        T
        \arrow[rr, "\flow(0_T{,}x_0)"]
        \arrow[d, "\id\times x_0"']
        \arrow[ur,"\id\times x_0"]
        &
        &
        E
        \arrow[d, "V"]
        \arrow[dll, Rightarrow]
        \\
        T\times E
        \arrow[r,"\flow"']
        &
        E
        \arrow[r, "V"']
        &
        R
    \end{tikzcd}
    \]
    Since this lax commutes for each point $x_0 \maps 1 \to E$, then by the assumption that $R$ is pointwise induced, the following diagram lax commutes.
    \[
    \begin{tikzcd}
            T \times E
            \arrow[r, "\flow"]
            \arrow[d, "\pi"']
            &
            E
            \arrow[d, "V"]
            \\
            E
            \arrow[r, "V"']
            \arrow[ur, Rightarrow]
            &
            R
        \end{tikzcd}
    \]
    Thus $V$ is a Lyapunov morphism for solutions $\flow$ of $f_E$, and $x^* \maps 1 \to E$ is stable. 
\end{proof}

\begin{example}[Classical dynamical stability]
\label{ex:classicstability2}
    Recall the functor $\T \maps \Man \to \Man$ from \cref{ex:manifolds2}. The category $\Man$ is concrete.
    \begin{enumerate}
    \setcounter{enumi}{-1}
        \item[\textbf{D0:}] (setting) We use the setting for stability given in \cref{ex:classicstability1}.
        \item[\textbf{D1:}] (unit clock) The unit clock $1 \maps \Rplus \to \T(\Rplus)$ assigns the vector $1$ to each point.
        \item[\textbf{D2:}] (stable object) Let $\sigma = 0 \maps \Rplus \to \T\Rplus$ be the constant $0$ vector field. 
        \item[\textbf{D3:}] (posetal structure) Since $\T \Rplus \cong \Rplus \times \R$, we give $\T \Rplus$ the lexicographic ordering, meaning that $(t,v) \leq (t',v')$ if $t < t'$ or $t=t'$ and $v \leq v'$.
        \item[\textbf{D4:}] (comparison lemma)  If a curve $\gamma \maps \Rplus \to \Rplus$ makes the left diagram in Axiom \textbf{D4} lax commute, then $\frac{\partial \gamma}{\partial t} \leq 0$. This implies that $\gamma(t) \leq \gamma(0)$ which is what the right diagram of Axiom \textbf{D4} says. 
    \end{enumerate}
    The coalgebraic decrescent condition $\F V \circ  f_E \leq  \sigma \circ V$ translates to $\frac{\partial V}{\partial x} \cdot  f(x) \leq \sigma(V(x))$, which is the classical Lyapunov decrescent condition. 
\end{example}

\begin{example}[Discrete-time stability]
\label{ex:discrete3}
    Recall from \cref{ex:discrete2} that an $\id_\C$-system is a discrete-time system in $\C$. In particular, classical discrete-time systems are $\id_\Set$-systems.
    \begin{itemize}
        \item[\textbf{D0:}] (setting) The setting for stability is that of \cref{ex:discrete1}.
        \item[\textbf{D1:}] (unit clock) Since the time object is $\N$, we define the unit clock system $ 1 = +1 \maps \N \to \N$ by $n \mapsto n+1$.
        \item[\textbf{D2:}] (stable system) Since the measurement object is $R = \Rplus$, we give it the coalgebra $\id_\Rplus \maps \Rplus \to \Rplus$.
        \item[\textbf{D3:}] (posetal structure) Give $\F\Rplus = \Rplus$ the same ordering.
        \item[\textbf{D4:}] (comparison lemma) Let $\gamma \maps \N \to \Rplus$ be a sequence of positive reals that makes the left diagram of Axiom \textbf{D4} lax commute, i.e.
        \[\begin{tikzcd}
            \N
            \arrow[r, "\gamma"]
            &
            \Rplus
            \\
            \N
            \arrow[u, "+1"]
            \arrow[r, "\gamma"']
            &
            \Rplus
            \arrow[u, "\id_\Rplus"']
            \arrow[ul, Rightarrow]
        \end{tikzcd}\]
        lax commutes. This says precisely that $\gamma(n+1) \leq \gamma(n)$ for each $n \in \N$. It is then clear that $\gamma(n) \leq \gamma(0)$ for all $n \in \N$, which is what the right diagram of Axiom \textbf{D4} says.
    \end{itemize}
    The coalgebraic decrescent condition for a function $V \maps E \to \Rplus$ to be Lyapunov according to \cref{thm:LyapTrajectory} says that $V(F(x)) \leq V(x)$ for all $x \in E$, which again recovers the classical Lyapunov condition for discrete-time systems. 

    Given a flow $\flow \maps \N \times E \to E$, then $c_k := \flow_k(x_0)$ is a solution to $F \maps E \to E$ if the diagram commutes: 
    \begin{equation}
    \begin{tikzcd}
        \N
        \arrow[r, "c"]
        &
        E 
        \\
        \N
        \arrow[u, "+1"]
        \arrow[r, "c"']
        &
        E
        \arrow[u, "F"']
    \end{tikzcd}
    \end{equation}
    which simply states that $c_{k+1} = F(c_k)$. A point $x^*$ is an equilibrium point if $c_k = x^*$ for all $k$, therefore $c_1 = F(c_0) = F(x^*) = x^*$, i.e., $x^*$ is a fixed point of $F$. This equilibrium point is stable if $\| c(k) \|_{x^*} \leq \alpha(\| c(0) \|_{x^*})$. This is equivalent to: 
    for all $\epsilon > 0$, there exists a $\delta > 0$ such that: 
    \[
    \| c(0) - x^* \| < \delta \qquad \mathrm{implies} \qquad 
    \| c(k) = F^k(x) - x^* \| < \epsilon  \quad \forall k \in \N.
    \]
    
    For $V \maps E \to \Rplus$ to be a Lyapunov function, according to \cref{thm:lyapunovfsys}, $V$ must be positive definite and the following diagram must lax commute:  
    \[
    \begin{tikzcd}
        E 
        \arrow[r, "V"]
        &
        \Rplus 
        \\
        E 
        \arrow[u, "F"]
        \arrow[r, "V"']
        &
        \Rplus
        \arrow[u, "id"']
        \arrow[ul, Rightarrow, ""]
    \end{tikzcd}
    \]
    Which simply yields: 
    \[
    V(F(x)) \leq V(x) \quad \textrm{implies} \quad 
    \nabla V(x) = V(F(x))  - V(x) \leq 0 
    \]
    and we arrive at the classic Lyapunov conditions for discrete-time dynamical systems. 
\end{example}

\begin{example}[Graphs]
\label{ex:graphs2}
    Recall from \cref{ex:graphs} that we can interpret a coalgebra $f \maps A \to \P(A)$ of the covariant powerset functor on $\Set$ as a graph, where $A$ is the vertex set, and $f(a)$ is the collection of vertices with an edge pointing to it from the vertex $a$.
    \begin{itemize}
        \item[\textbf{D0:}] (setting) Use the setting for stability given in \cref{ex:discrete1}
        \item[\textbf{D1:}] (unit clock) We give the time object $T = \N$ the unit clock graph $ 1 = +1 \maps \N \to \P(\N)$ where $n \mapsto \{n+1\}$. 
        \item[\textbf{D2:}] (stable system) Give $R = \Rplus$ the graph $\Rplus \to \P(\Rplus)$ given by $r \mapsto \{r\}$.
        \item[\textbf{D3:}] (posetal structure) Give $\P(\Rplus)$ the partial order given by $U \leq V$ iff $u \leq v$ for all $u \in U$ and $v \in V$. 
        \item[\textbf{D4:}] (comparison lemma) Let $\gamma \maps \N \to \Rplus$ be a sequence of positive reals such that the left diagram of Axiom \textbf{D4} lax commutes. This translates to $\{\gamma_{n+1}\} \leq \{\gamma_n\}$ which is precisely $\gamma_{n+1} \leq \gamma_n$, i.e., $\gamma$ is a decreasing sequence. It is clear then that $\gamma_0 \geq \gamma_n$, which is what the right diagram of Axiom \textbf{D4} says. 
    \end{itemize}
    In this setting, a trajectory is a sequence of vertices $\gamma \maps \N \to A$ such that for each $n \in \N$, the set of vertices pointed to by the vertex $\gamma_n$ is precisely $\{\gamma_{n+1}\}$. This indicates that this framework is limited to discussing ``deterministic'' graphs, those where each vertex has out-degree exactly 1. One could consider lax maps of coalgebras to allow more general notions of graph maps, but this does not serve our current purposes, and so lies outside the scope of the present work. 
\end{example}

\begin{example}[Monoid-Labeled Transition Systems]
\label{ex:monoidLTS}
    Let $(T, \oplus, 0)$ be a commutative monoid in $\Set$. Then we can give the following setting for dynamic stability.
    \begin{itemize}
        \item[\textbf{D0:}] (setting) Use the setting for stability given by using $T$ as the time object, and $R = \Rplus$. 
        \item[\textbf{D1:}] (unit clock) Equip the time object $T$ with the system $1_T \maps T \to \P(T \times T)$ which sends $t \in T$ to $\{(s, t+s)\}$.
        \item[\textbf{D2:}] (stable system) Define the stationary system on $R = \Rplus$ to be $0 \maps \Rplus \to \P(T \times \Rplus)$ given by $r \mapsto \{(0,r)\}$.
        \item[\textbf{D3:}] (posetal structure) Even though the powerset of any naturally carries a partial ordering by inclusion, a different ordering is more useful for our purposes. Instead, give $T \times \Rplus$ the order defined by $(t,x) \leq_2 (t',y)$ if and only if $x \leq y$, and then $\P(T \times \Rplus)$ the induced order $U \leq_{P2} V$ means that if $(u_0, u_1) \in U$ and $(v_0,v_1) \in V$, then $(u_0,u_1) \leq_2 (v_0,v_1)$.
        \item[\textbf{D4:}] (comparison lemma) Let $\gamma \maps T \to R$ be a map making the left diagram of Axiom \textbf{D4} lax commute. This means that 
        \begin{align*}
            \{(r, \gamma_{t+r})\}
            &= \{(r,p) \mid \gamma_{t + r} = p\}
            \\&= \P(\id \times \gamma)(\{(r,s) \mid t + r = s\})
            \\&= \P(\id \times \gamma)(1_T(t))
            \\&\leq_{P2} 0_R(\gamma_t)
            \\&= \{(0, \gamma_t)\}.
        \end{align*}
        So then for each $t \in \Rplus$, we have $(r, \gamma_{t+r}) \leq_2 (0, \gamma_t)$, which means $\gamma_{t+r} \leq \gamma_t$. This returns the right diagram of Axiom \textbf{D4} by taking $t = 0$.
    \end{itemize}
    In \cref{ex:BehaviorLTS_cts} and \cref{ex:BehaviorLTS_discrete} below, we examine the case where $T$ is $\N$ and $\Rplus$ respectively to give settings for the behavior of systems encoded as labeled transition systems. This leads to a faux setting for hybrid systems viewed through behavior-encoded LTSs in \cref{ex:BehaviorLTS_hybrid}. To properly capture hybrid systems requires a more elaborate framework, and is beyond the scope of the present work.
\end{example}

\begin{example}[Behavioral LTS of a continuous system]
\label{ex:BehaviorLTS_cts}
    Recall \cref{ex:LTS1}. Consider a classical continuous-time dynamical system $\dot x = f(x)$ on a space $E$ which is complete in the sense that each initial condition $x_0$ admits a solution curve $\Rplus \to E$. These solutions taken together form an action $\flow \maps \Rplus \times E \to E$ of the monoid $(\Rplus,+)$ on $E$, called the flow. We can encode the flow as an $\Rplus$-labeled transition system as follows. The underlying set of the space $E$ is the state space, and $x \xrightarrow t x'$ if $\flow(x,t) = x'$. 
    This LTS $B_f \maps E \to \P(\Rplus \times E)$ is called the \define{behavior of the system} \cite{tabuada2009verification}.
    \begin{enumerate}
    \setcounter{enumi}{-1}
        \item[\textbf{D0:}] (setting) Take the setting for stability given in \cref{ex:classicstability1}. 
        \item[\textbf{D1:}] (unit clock) Equip the time object $T = \Rplus$ with the unit clock $1_T \maps T \to \P(T \times T)$ given by $t \mapsto \{(r,s) \in T \times T \mid t+r=s\}$.
        \item[\textbf{D2:}] (stable system) Define the stationary system on $R = \Rplus$ to be $0 \maps \Rplus \to \P(\Rplus \times \Rplus)$ given by $r \mapsto \{(0,r)\}$.
        \item[\textbf{D3:}] (posetal structure) Give $\Rplus^2$ the order defined by $(a,b) \leq_2 (c,d)$ if and only if $b \leq d$, and then $\P(\Rplus^2)$ the induced order $U \leq_{P2} V$ means that if $(u_0, u_1) \in U$ and $(v_0,v_1) \in V$, then $(u_0,u_1) \leq_2 (v_0,v_1)$.
        \item[\textbf{D4:}] (comparison lemma) Let $\gamma \maps T \to R$ be a map making the left diagram of Axiom \textbf{D4} lax commute. This means that 
        \begin{align*}
            \{(r, \gamma_{t+r})\}
            &= \{(r,p) \mid \gamma_{t + r} = p\}
            \\&= \P(\id \times \gamma)(\{(r,s) \mid t + r = s\})
            \\&= \P(\id \times \gamma)(1_T(t))
            \\&\leq_{P2} 0_R(\gamma_t)
            \\&= \{(0, \gamma_t)\}.
        \end{align*}
        So then for each $t \in \Rplus$, we have $(r, \gamma_{t+r}) \leq_2 (0, \gamma_t)$, which means $\gamma_{t+r} \leq \gamma_t$. This returns the right diagram of Axiom \textbf{D4} by taking $t = 0$.
    \end{enumerate}
    In this setting, the coalgebraic decrescent condition \[\P(\Rplus \times V) \circ \B_f \leq 0_R \circ V\] translates to 
    \[\P(\Rplus \times V) (B_f(x)) \leq \{(0, V(x))\}\]
    where we can write $B_f(x) = \{(t, x') \mid \flow_t(x)=x'\}$, 
    \begin{align*}
        \{(0, V(x))\}
        &\geq \P(\Rplus \times V) (B_f(x)) 
        \\&= \P(\Rplus \times V) (\{(t, x') \mid \flow_t(x)=x'\})
        \\&= \{(t, r) \in \Rplus^2\mid V(\flow_t(x))=r\}
        \\&= \{(t, V(\flow_t(x)))\}.
    \end{align*}
    By the definition of the order on $\P(\Rplus \times \Rplus)$, the above inequality reduces to saying $V(\flow_t(x)) \leq V(x)$ for all $t \in \Rplus$. 
\end{example}

\begin{example}[Behavioral LTS of a discrete system]
\label{ex:BehaviorLTS_discrete}
    Similar to the previous example, we can package the data of a discrete-time system as an LTS as well. 
    \begin{itemize}
        \item[\textbf{D0:}] (setting) We use the setting for stability for discrete-time systems given in \cref{ex:discrete1}.
        \item[\textbf{D1:}] (unit clock) Equip the time object $T=\N$ with the $\N$-LTS $1 \maps \N \to \P(\N \times \N)$ given by $n \mapsto \{((m,n+m))\}$
        \item[\textbf{D2:}] (stable system) Equip $R =\Rplus$ with the system $0 \maps \Rplus \to \P(\N \times \Rplus)$ given by $r \mapsto \{(0, r)\}$. 
        \item[\textbf{D3:}] (posetal structure) Give $\N \times \Rplus$ the order defined by $(a,b) \leq_2 (c,d)$ if and only if $b \leq d$, and then given $\P(\N \times \Rplus)$ the induced order $U \leq_{P2} S$ means that if $(u_0,u_1) \in U$ and $(v_0,v_1) \in V$, then $(u_0,u_1) \leq_2 (v_0,v_1)$. 
        \item[\textbf{D4:}] (comparison lemma) Let $\gamma \maps \N \to \Rplus$ be a map making the left diagram of Axiom \textbf{D4} lax commute. This means that 
        \begin{align*}
            \{(m,\gamma_{n+m}\}
            &= \P(\N \times \gamma) \{(m,n+m)\}
            \\&= \P(\N \times \gamma) (1(n))
            \\&\leq \sigma(\gamma_n)
            \\&= \{(0,\gamma_n)\}.
        \end{align*}
        So then for each $n \in \N$, we have $(m,\gamma_{n+m}) \leq_2 (0, \gamma_n)$, which means $\gamma_{n+m} \leq \gamma_n$. This returns the right diagram of Axiom \textbf{D4} by taking $n = 0$.
    \end{itemize}
\end{example}

\begin{example}[Behavioral LTS of a hybrid system]
\label{ex:BehaviorLTS_hybrid}
    The behavior of a hybrid system can also be expressed as an LTS \cite{tabuada2009verification}. Hybrid time cannot be expressed using monoids and actions in the way we have in \cite{CLT1} and the present work. It fundamentally requires a different framework to capture the correct notion of flow. However, there are still several aspects of the theory we can capture on the coalgebraic side of the story. Consider a hybrid system which has a set $X$ of states, a set $A = U \sqcup \Rplus$ of transition labels, and a transition relation $\to \subseteq X \times A \times X$. In this setup, the elements of the set $U$ are used to label the transitions that occur when the system makes a discrete jump. Thus, we will consider the hybrid system as an $A$-LTS, i.e., a $\P(A \times -)$-system.
    \begin{itemize}
        \item[\textbf{S0:}] (setting) The category is $\Set$.
        \item[\textbf{S1:}] (space) The set of states $X$ is our space of interest.
        \item[\st{\textbf{S2:}}] \st{(time)} As noted, the expression of time does not fit into the monoid action framework. We will nonetheless take $T = A$ for now.
        \item[\textbf{S3:}] (stable object) We will still use $\Rplus$ as the stable object.
        \item[\textbf{S4:}] (distance) We use the standard metric for $d$.
        \item[\st{\textbf{D1:}}] \st{(unit clock)}
        As we do not have a time object, we cannot equip it with a coalgebra.
        \item[\textbf{D2:}] (stable system) As the set $A$ does contain $\Rplus$ as a subset, we can follow the previous examples and define $0 \maps \Rplus \to \P(A \times \Rplus)$ by $r \mapsto \{(0,r)\}$. 
        \item[\textbf{D3:}] (posetal structure) Similar to the previous examples, we can define an order $\leq_2$ on $A \times \Rplus$ by only comparing the second components, and then use this to induce an order $\leq_{P2}$ on $\P(A \times \Rplus)$. 
        \item[\st{\textbf{D5:}}] \st{(comparison lemma)} This axiom also depends on the time object, and so we cannot consider it.
    \end{itemize}
    This indicates the need for a modification of our framework which treats time in a more general way, beyond monoid actions. We leave this for future work.
\end{example}

\section{Monoidal Settings for Dynamic Stability}
\label{sec:monoidal}

Lyapunov theory for systems can be formulated building upon the case of flows \cite{CLT1}. This requires additional axioms for systems. These axioms completely characterize the stability of systems, as we derive necessary and sufficient conditions for the Lyapunov stability of systems. We begin by proving sufficiency, which holds essentially for formal reasons, followed by the converse, which requires substantially more careful treatment. 

\begin{defn}
\label{def:monoidalsettingdynamicstabilty}
    A \define{monoidal setting for dynamic stability} is a setting for dynamic stability $T \into E \onto R$ in $\C$ such that
    \begin{itemize}
        \item[\textbf{D5:}] (laxator) There is a natural transformation with components $\psi_{A,B} \maps \F A \times \F B \to \F(A \times B)$ such that the following diagram commutes.
        \[\begin{tikzcd}
            (\F X \times \F Y) \times \F Z
            \arrow[r, "\alpha_{\F X, \F Y, \F Z}"]
            \arrow[d, "\psi_{X,Y} \times \id"']
            &
            \F X \times (\F Y \times \F Z)
            \arrow[r, "\id \times \psi_{Y,Z}"]
            &
            \F X \times \F(Y \times Z)
            \arrow[d, "\psi_{X,Y \times Z}"]
            \\
            \F(X \times Y) \times \F Z
            \arrow[r, "\psi_{X \times Y, Z}"']
            &
            \F((X \times  Y) \times Z)
            \arrow[r, "\F\alpha_{X,Y,Z}"']
            &
            \F(X \times (Y \times Z))
        \end{tikzcd}\]
        
        \item[\textbf{D6:}] (stationary systems) There is a natural transformation with components $0_A \maps A \to \F A$ such that the following diagram commutes.
        \[\begin{tikzcd}[column sep = small]
            A \times B 
            \arrow[rr, "0_A \times 0_B"]
            \arrow[dr, "0_{A \times B}"']
            &&
            \F A \times \F B
            \arrow[dl, "\psi_{A,B}"]
            \\&\F(A \times B)
        \end{tikzcd}\]
        
        \item[\textbf{D4':}] (generalized comparison lemma) ${0}_R \maps R \to \F R$ is a stable system and for any $A \in \C$: 
        \[
        \begin{tikzcd}
            T \times A
            \arrow[d, "\L_A"']
            \arrow[r, "\gamma"]
            &
            R
            \arrow[d, "{0}_R"]
            \arrow[dl, Rightarrow, ""]
            \\
            \F(T \times A)
            \arrow[r, "\F \gamma"']
            &
            \F R
        \end{tikzcd}
        \qquad \mathrm{implies} \qquad 
        \begin{tikzcd}
            A
            \arrow[r, "0_\oplus \times \id_A", ""'{name = A}]
            &
            T \times A
            \arrow[d, "\gamma"]
            \\
            T \times A
            \arrow[u, "\pi_A"]
            \arrow[r, "\gamma"', ""{name = B}]
            &
            R
            \arrow[from = A, to = B, Rightarrow]
        \end{tikzcd}
        \]
        where the coalgebra $\L_X \maps T \times X \to \F(T \times X)$ is defined to be the composite 
        \begin{equation}
        \label{eq:Ldefinition}
        \L_X \coloneq \left[ T \times X \xrightarrow{1_T \times 0_X} \F T \times \F X \xrightarrow{\psi_{T,X}} \F(T \times X)\right].
        \end{equation}
    \end{itemize}
\end{defn}

\begin{example}[Manifolds]
\label{ex:manifolds4}
    Recall that in \cref{ex:classicstability1} we gave a setting for stability in $\Set$ associated to an ODE $\dot x = f(x)$, and in \cref{ex:classicstability2} we gave a setting for dynamic stability in $\Man$. It is important that the underlying category is $\Man$ for dynamic stability because this is required to form a tangent bundle functor for which vector fields are (a special case of) coalgebras.
    \begin{itemize}
        \item[\textbf{D5:}] (laxator) The natural transformation $\psi_{M,N} \maps \T M \times \T N \to \T(M \times N)$ takes a tangent vector on $M$ and a tangent vector on $N$, and realizes the pairing of them is a tangent vector on the product manifold $M \times N$. In that case, $\psi$ is invertible, but this is not generally true. 
        \item[\textbf{D6:}] (stationary systems) The stationary system on a manifold $M$ is the constant zero vector field $0_M \maps M \to \T M$.
        \item[\textbf{D4':}] (generalized comparison lemma) The generalized comparison lemma follows from the classical comparison lemma in a pointwise fashion. The vector field ${0} \maps \Rplus \to \Rplus \times \R$ given by ${0}(r) = (r,0)$. In this setting, axiom \textbf{D4'} becomes: 
        \[
        \begin{tikzcd}
            \Rplus \times \R
            \arrow[r, "T \gamma"]
            &
            \Rplus \times \R
            \\
            \Rplus
            \arrow[u, "{1}"]
            \arrow[r, "\gamma"']
            &
            \Rplus
            \arrow[u, "{0}"']
            \arrow[ul, Rightarrow, ""]
        \end{tikzcd}
        \qquad \mathrm{implies} \qquad 
        \begin{tikzcd}
            \Rplus
            \arrow[r,bend left=35pt,"\gamma_0", ""'{name = A}]
            \arrow[r,bend right=35pt,"\gamma"', ""{name = B}]
            &
            \Rplus
            \arrow[from = A, to = B, Rightarrow]
        \end{tikzcd}
        \] 
        where $T\gamma(t,1) = (\gamma(t),\dot{\gamma}(t))$, i.e., 
        \[
        \dot{\gamma}(t) \leq 0 \qquad \textrm{implies} \qquad \gamma(t) \leq \gamma(0)
        \]
        which is a special case of the classical comparison lemma. 
    \end{itemize}
\end{example}

\begin{example}[Discrete-time stability]
\label{ex:discrete4}
    Recall the setting for discrete-time stability given in \cref{ex:discrete3}.
    \begin{itemize}
        \item[\textbf{D5:}] (laxator) Let the laxator be $\id_{A \times B} \maps A \times B \to A \times B$.
        \item[\textbf{D6:}] (stationary systems) The stationary system on an object $A$ is $\id_A \maps A \to A$. 
        \item[\textbf{D4':}] (generalized comparison lemma) The generalized comparison lemma follows from the ordinary comparison lemma seen in \cref{ex:discrete3} in a pointwise fashion.
    \end{itemize}
\end{example}

\begin{example}[Labeled Transition Systems]
\label{ex:LTS3}
    Recall \cref{ex:BehaviorLTS_cts}.
    \begin{itemize}
    \item[\textbf{D5:}] (laxator) Let the laxator be given by \[
    \psi_{X,Y}:s\times 's\mapsto \{(a,x,y)\mid (a,x)\in s \text{ and } (a,y)\in s'\}.\]
    \item[\textbf{D6:}] (stationary systems) The stationary system $0_X \maps X \to P(A\times X)$ given by $x\mapsto \{(a,x)\mid a\in A\}=A\times \{x\}$.
    \item[\textbf{D4':}] (generalized comparison lemma) The generalized comparison lemma follows from the comparison lemma seen in \cref{ex:BehaviorLTS_cts}.
\end{itemize}
\end{example}

\begin{rmk}
\label{rmk:tensorproduct}
    The laxator provides a way of combining systems. Given two $\F$-systems, $ f \maps A \to \F A$ and $ g \maps B \to \F B$, we can construct a new system $ f \otimes_\psi  g$ as the following composite.
    \begin{equation*}
         f \otimes_\psi  g \coloneq \left[ A \times B \xrightarrow{ f \times  g} \F A \times \F B \xrightarrow{\psi_{A,B}} \F(A \times B) \right]
    \end{equation*}
    For instance, the system $\L_X$ given in \cref{eq:Ldefinition} is exactly $ 1_T \otimes_\psi 0_X$. This defines a tensor product on $\Sys_\F$ which is associative due to the assumption that the diagram in Axiom \textbf{D5} commutes. This forms the ``binary'' components of the data and properties that would make $\F$ into a lax monoidal functor, and $\Sys_\F$ into a monoidal category. We do not need the unital data and properties in this section as they do not come into play in the proof of the Lyapunov theorem. We discuss what the unital conditions afford us in \cref{sec:converse}.
\end{rmk}

\begin{rmk}
\label{rmk:stationarysystem}
    Stationary systems ${0}_A \maps A \to \F A$ give to each object of $\C$ a system which should be interpreted as ``standing still'', depending on the context. The naturality condition says that every map of $\C$ is a system map between the corresponding zero systems.
    The commuting diagram in Axiom \textbf{D6} says that the zero system on a product is the same as the zero system on each component followed by $\psi$, i.e., $0_A \otimes_\psi 0_B = 0_{A \times B}$. 
\end{rmk}

\begin{defn}
\label{def:functorL}
    Recall the construction of an $\F$-system $\L_X \maps T \times X \to \F(T \times X)$ for any given object $X \in \C$ given in \cref{eq:Ldefinition}.
    It is easy to check that this construction gives the components of a natural transformation $T \times - \To \F(T \times -)$. As noted in \cref{rmk:functtocoalgisnat}, this equivalently defines a functor $\L \maps \C \to \Sys_\F$. 
\end{defn}

\begin{rmk}
\label{rmk:Tx-monad}
    The composite functor $U \circ \L$ sends an object $X \in \C$ to $T \times X$. The functor $T \times - \maps \C \to \C$ induced by a monoid object $T$ carries the structure of a monad. The monad multiplication $\mu \maps T \times T \times - \To T \times -$ has components $\mu_X \maps T \times T \times X \to T \times X$ given by $\oplus \times \id_X$, and monad unit $\eta \maps \id_\C \To T \times -$ has components $\eta \maps X \to T \times X$ given by $0 \times \id_X$. Algebras of this monad are flows of $T$, and the Eilenberg-Moore category is $\C^{T \times-} \simeq \TFlow$. 
\end{rmk}

\begin{defn}
\label{def:solution}
    Let ${f}_E \maps E \to \F E$ be an $\F$-system, and let $A \in \C$. A morphism $\flow \maps T \times A \to E$ is a \define{solution} to ${f}_E$ if $\flow$ is a morphism of $\F$-systems $\L_A \to {f}_E$, i.e., if the following diagram commutes.
    \[
    \begin{tikzcd}
        T \times A
        \arrow[r, "\flow"]
        \arrow[d, "\L_A"']
        &
        E
        \arrow[d, "{f}_E"]
        \\
        \F(T \times A)
        \arrow[r, "\F\flow"']
        &
        \F E
    \end{tikzcd}
    \exmath{\frac{\partial}{\partial t} \flow_t(x) \mid_{x=\flow_t(x_0)} = {f}_E(\flow_t(x_0))}
    \]
    The flow $\flow$ is called a \define{solution flow} if $A=E$ and $\flow$ satisfies the conditions to be a $T$-flow.
\end{defn}

We can now present Lyapunov's theorem for systems---extending Lyapunov's theorem for flows (cf. \cref{thm:Lyap}). In particular, we will provide conditions for when a morphism $V \maps E \to R$ is a Lyapunov morphism via a lax commuting diagram of systems. Per \cref{def:lyapunov}, recall that a Lyapunov morphism must satisfy two conditions: (1) positive definite and (2) decrescent on flows. 

\begin{thm}[Lyapunov's theorem for systems]
\label{thm:lyapunovfsys}
   Let ${f}_E \maps E \to \F E$ be a system in a setting for dynamic stability. If $V \maps E \to R$ is a morphism in $\C$ that is positive definite with respect to an equilibrium point $x^* \maps 1 \to E$, and the following diagram lax commutes: 
    \[
    \begin{tikzcd}
        E
        \arrow[r, "V"]
        \arrow[d, "{f}_E"']
        &
        R
        \arrow[d, "{0}_R"]
        \arrow[dl, Rightarrow]
        \\
        \F E
        \arrow[r, "\F V"']
        &
        \F R
    \end{tikzcd}
    \exmath{\frac{\partial V}{\partial x}  {f}_E(x) \leq 0}\]
    then $V$ is a Lyapunov morphism for every solution $\flow \maps T \times E \to E$ of ${f}_E$. Therefore, the point $x^*$ is a stable equilibrium point if a solution flow exists. 
\end{thm}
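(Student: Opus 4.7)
The plan is to reduce the claim to the Lyapunov theorem for flows (\cref{thm:Lyap}) by applying the generalized comparison lemma (Axiom \textbf{D4'}) with the test curve $\gamma := V \circ \flow \maps T \times E \to R$. Since the underlying setting for stability already certifies that positive definiteness plus flow-level decrescence implies stability, the only work is to translate the system-level hypothesis $\F V \circ f_E \leq 0_R \circ V$ together with the solution equation $f_E \circ \flow = \F \flow \circ \L_E$ into the flow-level lax square of \cref{def:lyapunov}.

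Concretely, I would first whisker the system-level decrescent inequality by $\flow \maps T \times E \to E$ on the right. Using that the posetal structure on $R$ (and hence on $\F R$ by \textbf{D3}) is preserved by precomposition, this yields $\F V \circ f_E \circ \flow \leq 0_R \circ V \circ \flow$ as morphisms $T \times E \to \F R$. Substituting the solution-flow identity $f_E \circ \flow = \F\flow \circ \L_E$ and invoking functoriality of $\F$ rewrites the left-hand side as $\F(V\circ\flow) \circ \L_E$. The resulting inequality
\[
\F(V\circ\flow)\circ \L_E \;\leq\; 0_R \circ (V\circ\flow)
\]
is precisely the left-hand lax square of \textbf{D4'} with $A = E$ and $\gamma = V\circ\flow$. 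Applying the generalized comparison lemma produces the right-hand lax square, namely
\[
V\circ\flow \;\leq\; V\circ\flow\circ(0_\oplus \times \id_E)\circ \pi_E.
\]

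To finish, I would collapse the right-hand side using that $\flow$ is a $T$-flow: the initialization axiom says $\flow \circ (0_\oplus \times \id_E) = \id_E$, so the inequality reduces to $V\circ\flow \leq V\circ \pi_E$, which is exactly the lax commutativity of the decrescent square in \cref{def:lyapunov}. Combined with the assumed positive definiteness, $V$ is therefore a Lyapunov morphism for $\flow$, and \cref{thm:Lyap} gives that $x^*$ is stable. The only subtle step is keeping the orientation of the 2-cells consistent between the system-level hypothesis, the conclusion of \textbf{D4'}, and the convention of \cref{def:lyapunov}; everything else is formal diagram chasing using functoriality of $\F$ and the monoidal unit law for the flow.
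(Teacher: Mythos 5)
Your proof is correct and takes essentially the same route as the paper's: what you describe as whiskering the system-level inequality by $\flow$ and substituting $f_E \circ \flow = \F\flow \circ \L_E$ is exactly the paper's horizontal pasting of the strict solution square with the assumed lax square, followed by the generalized comparison lemma \textbf{D4'} applied to $\gamma = V \circ \flow$ and the initialization axiom $\flow \circ (0_\oplus \times \id_E) = \id_E$ to recover the decrescent square of \cref{def:lyapunov}. The 2-cell orientations you flag do line up, and the final appeal to \cref{thm:Lyap} matches the paper.
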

\begin{proof}
    For every flow $\flow$ that is a solution to ${f}_E$, consider the following lax commuting diagram.
    \[
    \begin{tikzcd}
        T \times E 
        \arrow[r, "\flow"]
        \arrow[d, "\L_E"']
        &
        E
        \arrow[r, "V"]
        \arrow[d, "{f}_E"']
        &
        R
        \arrow[d, "{0}_R"]
        \arrow[dl, Rightarrow]
        \\
        \F(T \times E)
        \arrow[r, "\F \flow"']
        &
        \F E 
        \arrow[r, "\F V"']
        &
        \F R
    \end{tikzcd}
    \]
    The comparison lemma (see \textbf{D4}) implies that the following diagram lax commutes.
    \[\begin{tikzcd}[column sep = small]
    	T \times E & E \\
    	& T \times E & E \\
    	E && R
    	\arrow["\pi", from=1-1, to=1-2]
    	\arrow["\flow"', from=1-1, to=3-1]
    	\arrow["{0_\oplus \times \id_E}"', from=1-2, to=2-2]
    	\arrow["\id_E"{description}, from=1-2, to=2-3]
    	\arrow["\flow"', from=2-2, to=2-3]
    	\arrow[Rightarrow, from=2-2, to=3-1]
    	\arrow["V", from=2-3, to=3-3]
    	\arrow["V"', from=3-1, to=3-3]
    \end{tikzcd}\]
    Thus $V$ is decrescent, i.e., a Lyapunov morphism. The fact that $x^* \maps 1 \to E$ is stable for every flow that is a solution to ${f}_E$ follows directly from \cref{thm:Lyap}. 
\end{proof}

Applying \cref{thm:lyapunovfsys} to \cref{ex:manifolds4}, we recover \cref{thm:ClassicLyapunov}, the classic Lyapunov theorem for dynamical systems.

\begin{example}[Markov kernels]
\label{ex:MarkovKernels2}
    Recall from \cref{ex:MarkovKernels1} that Markov kernels are coalgebras of the endofunctor $D \maps \Meas \to \Meas$ which sends a measure space $X$ to the measure space $D(X)$ of probability measures on $X$. 
    \begin{itemize}
        \item[\textbf{D0:}] (setting) Take the classical discrete-time setting \cref{ex:discrete1}.
        \item[\textbf{D1:}] (unit clock) The unit clock $\N \to D(\N)$ is given by $n \mapsto \delta_{n+1}$.
        \item[\textbf{D3:}] (posetal structure) Define a partial order $\leq$ on $D(\Rplus)$ by saying that $\mu \leq \nu$ if $\mu([0,r)) \leq \nu([0,r))$ for all $r \in \Rplus$.
        \item[\textbf{D5:}] (laxator) $\psi \maps D(X) \times D(Y) \to D(X \times Y)$ sends a pair of probability measures $(\mu, \nu)$ to the product measure $\mu \cdot \nu$ on $X \times Y$. 
        \item[\textbf{D6:}] (stationary systems) $0_X \maps X \to D(X)$ by $x \mapsto \delta_x$.
        \item[\textbf{D4':}] (generalized comparison lemma) Let $\gamma \maps \N \times A \to \Rplus$ be a map that makes the left diagram of Axiom \textbf{D4'} lax commute. By definition, $\L_A \maps \N \times A \to D(\N \times A)$ is exactly $(n,a) \mapsto \delta_{n+1} \cdot \delta_a = \delta_{(n+1,a)}$.
        \[
        \begin{tikzcd}
            \N \times A
            \arrow[d, "{(n,a)} \mapsto\delta_{(n+1,a)}"']
            \arrow[r, "\gamma"]
            &
            \Rplus
            \arrow[d, "r \mapsto \delta_r"]
            \arrow[dl, Rightarrow]
            \\
            D(\N \times A)
            \arrow[r, "D\gamma"']
            &
            D\Rplus
        \end{tikzcd}\]
        The top-right path sends $(n,a)$ to the probability measure $\delta_{\gamma(n,a)}$, and the left-bottom path sends $(n,a)$ to $\delta_{\gamma(n+1,a)}$. So the diagram above lax commuting implies that $\gamma(n+1,a) \leq \gamma(n,a)$, i.e., $\gamma$ is decreasing in the first component. Thus all the values $\gamma(-,a)$ takes are bounded above by $\gamma(0,a)$, which is precisely what the right diagram of Axiom \textbf{D4'} asks.
        \[
        \begin{tikzcd}
            A
            \arrow[r, "0_\oplus \times \id_A", ""'{name = A}]
            &
            \N \times A
            \arrow[d, "\gamma"]
            \\
            \N \times A
            \arrow[u, "\pi_A"]
            \arrow[r, "\gamma"', ""{name = B}]
            &
            \Rplus
            \arrow[from = A, to = B, Rightarrow]
        \end{tikzcd}
        \]
    \end{itemize}
    Trajectories $\gamma$ in a Markov kernel $m \maps E \to DE$ must make the following diagram commute.
    \[\begin{tikzcd}
        \N
        \arrow[r, "\gamma"]
        \arrow[d, "n \mapsto \delta_{n+1}"']
        &
        E
        \arrow[d, "m"]
        \\
        D\N
        \arrow[r, "D\gamma"']
        &
        DE
    \end{tikzcd}\]
    This says that for each $n \in \N$, the probability measure assigned to $\gamma_n \in E$ must be $\delta_{\gamma_{n+1}}$. In other words, a trajectory is a sequence of completely deterministic states in $m$. To capture genuinely stochastic behaviors requires a more general framework. This lies beyond the scope of the present work.
\end{example}

\section{Existence and Uniqueness}
\label{sec:existunique}

In this section, we develop some theory around existence and uniqueness of solutions to systems in a monoidal setting for stability. 

\begin{defn}
\label{def:derivative}
    Let $\flow \maps T \times E \to E$ be a $T$-flow. The \define{derivative}, denoted by $\D\flow$, is given by the following composite.
    \[
    \D\flow := \left[ E \xrightarrow{0_\oplus \times \id_E} T \times E \xrightarrow{\L_E} \F(T \times E) \xrightarrow{\F\flow} \F E
    \right]
    \]
    where $0_\oplus$ is the neutral element of the monoid structure on $T$, given by \cref{lem:Tselfcomplete}. 
\end{defn}

Note that in the context of vector fields, we are defining the ``derivative'' of the flow $\flow$ to be the vector field given by the time derivative of $\flow$ at time $0$.
\[\D \flow(x) \coloneq \frac{\partial \flow_t(x)}{\partial t}\bigg|_{t=0}\]
The choice of time $0$ may seem arbitrary at first, but action associativity implies that if a point $x$ is in the future of some other point, i.e., $x=\flow_{t_0}(x_0)$, then \[\frac{\partial\flow_t(x)}{\partial t}\bigg|_{t=0} = \frac{\partial\flow_t(x_0)}{\partial t}\bigg|_{t=t_0}\]
which is just another way of saying that the system $\D\flow$ is time-invariant.

\begin{defn}
\label{def:Tcomplete}
    Let $T \in \C$ be an object equipped with a point $0_\oplus \maps 1 \to T$ and the unit clock $\F$-system ${1}_T \maps T \to \F T$. A \define{$T$-complete system} is an $\F$-system ${f}_A \maps A\to \F A$ such that for every $g \maps B\to A$, there is a unique morphism of $\F$-systems $\gamma \maps (T \times B,\L_B) \to (A, {f}_A)$ such that the following diagram commutes: 
    \[
    \begin{tikzcd}
        B
        \arrow[r, "0_\oplus \times \id_B"]
        \arrow[dr, "g"']
        &
        T \times B 
        \arrow[d, "\gamma", dashed]
        \arrow[r, "\L_B"]
        &
        \F(T \times B)
        \arrow[d, "\F \gamma", dashed]
        \\&
        A
        \arrow[r, "{f}_A"']
        &
        \F A
    \end{tikzcd}\]
    Let $\TComp$ be the full subcategory of $\Sys_\F$ spanned by $T$-complete systems.
\end{defn}

\begin{defn}
\label{def:integral}
    For a $T$-complete system $ f_E \maps E \to \F E$, Let $\I{f}_E$ denote the map of systems $\L_E \to  f_E$, termed the \define{integral}, that is given by applying the $T$-completeness property to the identity map, as displayed below.
    \begin{equation}
    \label{eq:integral}
    \begin{tikzcd}
        E
        \arrow[r, "0_\oplus \times \id_E"]
        \arrow[dr, "\id_E"']
        &
        T \times E 
        \arrow[d, "\I{f}_E", dashed]
        \arrow[r, "\L_E"]
        &
        \F(T \times E)
        \arrow[d, "\F \I{f}_E", dashed]
        \\&
        E
        \arrow[r, "{f}_E"']
        &
        \F E
    \end{tikzcd}
    \end{equation}
\end{defn}

\begin{thm}[Existence and Uniqueness]
\label{thm:existenceuniqueness}
    Assume that the unit clock $1_T \maps T \to \F T$ is itself $T$-complete. 
    \begin{enumerate}
        \item 
        Given a $T$-complete system ${f}_E \maps E \to \F E$, $\I {f}_E$ is a solution to ${f}_E$. 
        Moreover, the integral $\I$ of \cref{def:integral} defines a functor:
        \[
        \I \maps \TComp \to \TFlow
        \]
        with the action on morphisms defined trivially. 
        \item 
        If $\D\flow$ is $T$-complete for all $T$-flows $\flow$, then for any solution flow $\flow$ of a system ${f}_E \maps E \to \F E$, 
        \[
        \flow = \I{f}_E. 
        \]
        Moreover, the derivative defines a functor $\D \maps \TFlow \to \TComp$, satisfying: 
        \[
        \flow = \I\D \flow, \qquad {f}_E = \D \I {f}_E 
        \]
        Therefore, $\I$ is an isomorphism of categories with inverse $\D$:
        \[
        \begin{tikzcd}
            \TComp
            \arrow[r, bend left = 20, "\I"]
            \arrow[r, phantom, "\simeq"]
            &
            \TFlow
            \arrow[l, "\D", bend left = 20]
        \end{tikzcd}
        \]
    \end{enumerate}
\end{thm}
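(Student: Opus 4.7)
The plan is to drive every step through the uniqueness half of $T$-completeness. For part (1), the solution property of $\I f_E$ is immediate: the commuting square in \cref{def:integral} is exactly the $\F$-system morphism condition $\L_E \to f_E$, which is the definition of a solution. That $\I f_E$ is a $T$-flow amounts to verifying the flow axioms. Initialization $\I f_E \circ (0_\oplus \times \id_E) = \id_E$ is built into the definition of $\I$. The composition axiom $\I f_E \circ (\oplus \times \id_E) = \I f_E \circ (\id_T \times \I f_E)$ is established by viewing both sides as $\F$-system morphisms $\L_{T \times E} \to f_E$, checking that they share the initial condition $\I f_E$, and invoking uniqueness in the $T$-completeness of $f_E$. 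The right composite is an $\F$-system morphism because $\id_T \times \I f_E$ is one by naturality of $\L$; the left requires that $\oplus \times \id_E$ itself be an $\F$-system morphism $\L_{T \times E} \to \L_E$, which follows from the $T$-completeness of $1_T$ together with naturality of the laxator $\psi$. Functoriality of $\I$ with $\I h := h$ is the same uniqueness argument: both $h \circ \I f_A$ and $\I f_{A'} \circ (\id_T \times h)$ are $\F$-system morphisms $\L_A \to f_{A'}$ with common initial condition $h$.

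For part (2), first define $\D p := p$ on morphisms. Well-definedness is a naturality calculation: given $p \circ \flow = \flow' \circ (\id_T \times p)$, one expands $\F p \circ \D\flow$ using functoriality of $\F$ and naturality of $\L$ to reach $\D\flow' \circ p$. The hypothesis that $\D\flow$ is $T$-complete for every $T$-flow $\flow$ guarantees that $\D$ lands in $\TComp$. The crux is the identity
\[
\D\flow \circ \flow \;=\; \F\flow \circ \L_E \maps T \times E \to \F E,
\]
asserting that $\flow$ is itself a solution flow of $\D\flow$. The strategy is to apply $\F$ to the composition axiom $\flow \circ (\oplus \times \id_E) = \flow \circ (\id_T \times \flow)$, post-compose with $\L_{T \times E}$, and invoke naturality of $\L$ on both sides to obtain
\[
\F\flow \circ \L_E \circ (\oplus \times \id_E) \;=\; \F\flow \circ \L_E \circ (\id_T \times \flow).
\]
Pre-composing with $0_\oplus \times \id_{T \times E} \maps T \times E \to T \times T \times E$ and using the monoid unit law collapses the left side to $\F\flow \circ \L_E$, and the right side to $\F\flow \circ \L_E \circ (0_\oplus \times \id_E) \circ \flow = \D\flow \circ \flow$.

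With the key identity in hand, $\I\D\flow = \flow$ follows from $T$-completeness of $\D\flow$ applied to $\id_E$: both $\flow$ and $\I\D\flow$ are $\F$-system morphisms $\L_E \to \D\flow$ with initial condition $\id_E$, hence coincide. The reverse identity $\D\I f_E = f_E$ is a direct computation:
\[
\D\I f_E = \F(\I f_E) \circ \L_E \circ (0_\oplus \times \id_E) = f_E \circ \I f_E \circ (0_\oplus \times \id_E) = f_E,
\]
where the middle equality uses that $\I f_E$ is an $\F$-system morphism. Together with the functoriality of $\D$, these establish the isomorphism $\TComp \cong \TFlow$. The main obstacle is the identity $\F\flow \circ \L_E = \D\flow \circ \flow$, together with the subsidiary claim that $\oplus \times \id_E$ is an $\F$-system morphism $\L_{T \times E} \to \L_E$; both rest on careful juggling of naturality of $\L$, naturality of $\psi$, and the self-completeness of the unit clock $1_T$, with the coherence hypothesis \textbf{D5} carrying the weight in the non-strictly-associative case.
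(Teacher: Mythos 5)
Your proposal is correct and follows essentially the same route as the paper: you isolate the same two key facts (that $\oplus \times \id_X$ is a system map $\L_{T\times X} \to \L_X$, the paper's \cref{lem:associatedcoalg}, and that every $T$-flow solves its own derivative, the paper's \cref{lem:flowissolution}), and you settle all remaining identities by the same uniqueness-of-solutions-with-equal-initial-condition arguments used in \cref{lem:integralfunctor} and \cref{lem:ID=1}. The only stylistic differences are that you verify the flow axioms for $\I f_E$ by a direct uniqueness argument where the paper's \cref{lem:Tselfcomplete} uses a hom-set/Yoneda construction, and you phrase \cref{lem:flowissolution} equationally rather than as a pasting diagram; note only that the step you label ``naturality of $\L$'' on the left-hand side is really the subsidiary claim about $\oplus \times \id_E$, which you do correctly flag and attribute to \textbf{D5}, \textbf{D6}, naturality of $\psi$, and $T$-completeness of the unit clock.
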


The following diagram summarizes the all the functors in this section, and their relationships under the conditions of \cref{thm:existenceuniqueness} and \cref{cor:monadic} below.
\[\begin{tikzcd}
    &
    \TComp
    \arrow[ddl, bend left = 15, "\U"]
    \arrow[ddr, bend left = 15, "\I"]
    \\\\
    \C
    \arrow[uur, phantom, sloped, "\bot"]
    \arrow[uur, bend left = 15, "\L"]
    \arrow[rr, bend left = 10, "T \times -"]
    \arrow[rr, phantom, "\bot"]
    &&
    \TFlow
    \arrow[uul, bend left = 15, "\D"]
    \arrow[uul, phantom, sloped, "\simeq"]
    \arrow[ll, bend left = 10, "\U'"]
    \arrow[from=3-1, to=3-1, loop, in=155, out=235, distance=10mm, "\F"]
\end{tikzcd}\]

\begin{cor}
\label{cor:monadic}
    In a setting with the uniqueness property, $\L$ is left adjoint to $U$, and this adjunction is monadic.
\end{cor}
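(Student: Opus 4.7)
The plan is to derive the corollary by transporting the monadic Eilenberg--Moore adjunction for the monad $T\times-$ along the isomorphism of categories $\I$ provided by Theorem~\ref{thm:existenceuniqueness}. Under the uniqueness property, $\I\maps\TComp\to\TFlow$ is an isomorphism with inverse $\D$, and by Remark~\ref{rmk:Tx-monad}, $\TFlow$ is the Eilenberg--Moore category of the monad $T\times-$ on $\C$, so the adjunction $(T\times-)\dashv\U'$ is monadic. Since $\U=\U'\circ\I$ and $\I$ is an equivalence, the composite $\D\circ(T\times-)$ is left adjoint to $\U$ via the chain of natural bijections
\[
\mathrm{Hom}_{\TComp}(\D(T\times X),Y)\;\cong\;\mathrm{Hom}_{\TFlow}(T\times X,\I Y)\;\cong\;\mathrm{Hom}_{\C}(X,\U'\I Y)\;=\;\mathrm{Hom}_{\C}(X,\U Y),
\]
and this adjunction inherits monadicity from $\U'\dashv(T\times-)$ since monadicity is preserved by post-composition of a monadic functor with an equivalence.

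It remains to identify the transported left adjoint $\D\circ(T\times-)$ with $\L$. Since $\I$ and $\D$ are mutually inverse, this amounts to showing $\I\L_X=\mu_X$ in $\TFlow$ for every $X\in\C$, where $\mu_X\coloneq\oplus\times\id_X\maps T\times(T\times X)\to T\times X$ is the canonical free $T$-flow on $X$. By Definition~\ref{def:integral}, $\I\L_X$ is characterized as the unique $\F$-system morphism $\L_{T\times X}\to\L_X$ whose precomposition with $0_\oplus\times\id_{T\times X}$ is the identity on $T\times X$. The unit condition $\mu_X\circ(0_\oplus\times\id_{T\times X})=\id_{T\times X}$ is immediate from the monoid axioms, so by uniqueness $\I\L_X=\mu_X$. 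In particular, $\L_X=\D\mu_X$ is $T$-complete, so $\L$ genuinely factors through $\TComp$, and $\L=\D\circ(T\times-)$, completing the proof.

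The main obstacle will be verifying that $\mu_X$ is itself a morphism of $\F$-systems $\L_{T\times X}\to\L_X$, i.e.\ that $\L_X\circ\mu_X=\F\mu_X\circ\L_{T\times X}$. Expanding the definition \eqref{eq:Ldefinition} of $\L$, this square becomes a diagram involving the laxator $\psi$ applied at two different pairs of arguments, the two stationary systems $0_X$ and $0_{T\times X}$ (related via Axiom~\textbf{D6}), and the composite $1_T\circ\oplus\maps T\times T\to\F T$. Its commutativity should follow by combining naturality of $\psi$, the associator coherence in Axiom~\textbf{D5}, Axiom~\textbf{D6} applied to unwrap $0_{T\times X}$, and Lemma~\ref{lem:Tselfcomplete}, which supplies the necessary compatibility between the unit clock $1_T$ and the monoid multiplication $\oplus$ on $T$.
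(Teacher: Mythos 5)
Your proof is correct, but it takes a genuinely different route from the paper's. The paper first establishes the adjunction $\L \dashv \U$ directly (\cref{lem:LadjU}): the defining universal property of a $T$-complete system says precisely that $0_\oplus \times \id_B \maps B \to \U(\L_B)$ is a universal arrow from $B$ to $\U$, so the adjunction is nearly tautological from \cref{def:Tcomplete}; monadicity then follows by observing (\cref{rmk:integral}) that $\I$ is the canonical comparison functor into the Eilenberg--Moore category $\TFlow$ of the monad $\U\circ\L = T\times-$, and that \cref{thm:existenceuniqueness} makes it an equivalence. You instead start from the monadic free--forgetful adjunction $(T\times -)\dashv \U'$ and transport it across the isomorphism $\I \cong \D^{-1}$, which forces you to separately identify the transported left adjoint $\D\circ(T\times-)$ with $\L$; that identification is exactly \cref{lem:associatedcoalg} ($\mu_X$ is a system map $\L_{T\times X}\to\L_X$, hence $\D(\oplus\times\id_X)=\L_X$) combined with the uniqueness clause of $T$-completeness, and your sketch of the ``main obstacle'' correctly reconstructs the ingredients of that lemma (naturality and associativity of $\psi$, Axiom \textbf{D6}, and the compatibility of $1_T$ with $\oplus$ from \cref{rmk:Tselfcomplete}). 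The trade-off: the paper's route gets the adjunction from the weaker hypothesis that each $\L_X$ is $T$-complete and reserves the full isomorphism for monadicity, and it obtains the identification of the left adjoint for free from the comparison-functor formalism; your route leans on the full strength of \cref{thm:existenceuniqueness} from the outset but makes the transport of monadicity along the equivalence completely explicit. Two small slips worth fixing: the adjunction you inherit monadicity from should be written $(T\times-)\dashv\U'$, not $\U'\dashv(T\times-)$, and since $\U=\U'\circ\I$ what you are using is that \emph{pre}composing a monadic functor with an equivalence of its domain preserves monadicity; also, the $T$-completeness of $\L_X=\D\mu_X$ is supplied by the standing hypothesis (every $\D\flow$ has the uniqueness property, equivalently is $T$-complete by \cref{lem:TcompEquivalent}) and is needed \emph{before} you may invoke the uniqueness clause to conclude $\I\L_X=\mu_X$, so the order of those two sentences should be reversed.
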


The rest of this section is dedicated to proving \cref{thm:existenceuniqueness} and \cref{cor:monadic}. A sketch of how the lemmas therein contribute to this end follows. \cref{lem:Tselfcomplete} is of foundational significance. It says that if the unit clock is $T$-complete, then $T$ carries a monoid structure that is unique with respect to the property of being a solution to the unit clock coalgebra. Moreover, any other $T$-complete system is carries an action of that monoid, unique with respect to being a solution to the system.
\cref{lem:flowissolution} and its supporting lemmas says that the derivative is functorial. \cref{lem:TcompEquivalent} characterizes the subcategory $\TComp \subseteq \Sys_\F$ and support \cref{lem:integralfunctor} to show the integral is also functorial. We then show these functor are inverses on objects with \cref{lem:ID=1}.

\begin{lem}
\label{lem:Tselfcomplete}
    If the unit clock ${1}_T \maps T \to \F T$ is itself $T$-complete, then there exists a unique monoid structure on $T$ which is a solution to $ 1_T$. Further, for any $T$-complete system ${f}_E \maps E \to \F E$ there exists a unique $T$-flow on $E$ which is a solution to $ f_E$.
\end{lem}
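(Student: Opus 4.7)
My plan is to exploit the universal property of $T$-completeness as the central engine, defining the multiplication and the flow as the unique solutions it provides and verifying each required axiom by matching it against a uniqueness clause of that property.

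I would define $\oplus \coloneq \I\, 1_T \maps T \times T \to T$ by applying $T$-completeness of $1_T$ to $g = \id_T$, so that $\oplus$ is the unique morphism of $\F$-systems $(T \times T, \L_T) \to (T, 1_T)$ satisfying $\oplus \circ (0_\oplus \times \id_T) = \id_T$. The left unit axiom is then built in, and the uniqueness of the monoid structure with unit $0_\oplus$ is a direct consequence of the same property: any associative $\oplus'$ with $\oplus' \circ (0_\oplus \times \id_T) = \id_T$ that is a morphism $\L_T \to 1_T$ must coincide with $\oplus$. For associativity I consider the composites $\oplus \circ (\oplus \times \id_T)$ and $\oplus \circ (\id_T \times \oplus)$ and show both realize the unique solution given by $T$-completeness of $1_T$ with $B = T \times T$ and $g = \oplus$. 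Initial-condition matching is direct via the left unit; the system-morphism check for $\id_T \times \oplus$ uses naturality of $\psi$ and of $0$ to yield $\F(\id_T \times \oplus) \circ \L_{T \times T} = \psi_{T,T} \circ (1_T \times (0_T \circ \oplus)) = \L_T \circ (\id_T \times \oplus)$; the check for $\oplus \times \id_T$ invokes the associator coherence in Axiom D5 to rewrite $\L_{T \times T}$ (up to the canonical associator) as $\psi_{T \times T, T} \circ (\L_T \times 0_T)$, after which the system-morphism property $\F\oplus \circ \L_T = 1_T \circ \oplus$ closes the check.

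For the right unit $\oplus \circ (\id_T \times 0_\oplus) = \id_T$ I apply $T$-completeness of $1_T$ with $B = 1$ and $g = 0_\oplus$. The composite $\oplus \circ (\id_T \times 0_\oplus)$ satisfies the hypotheses: its initial value is $\oplus(0_\oplus, 0_\oplus) = 0_\oplus$ by the left unit, and $\id_T \times 0_\oplus$ is a system map $(T \times 1, \L_1) \to (T \times T, \L_T)$ by a direct computation using naturality of $\psi$ and of $0$. Hence it coincides with the unique such solution $\gamma$. Identifying $\gamma$ with $\id_T$ under the canonical iso $T \times 1 \cong T$ requires $\L_1$ and $1_T$ to agree along that iso, effectively a unital coherence for the laxator.

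For Part 2 the same recipe gives $\flow \coloneq \I f_E$ via $T$-completeness of $f_E$ applied to $g = \id_E$; the initialization axiom is immediate. The composition axiom $\flow \circ (\oplus \times \id_E) = \flow \circ (\id_T \times \flow)$ follows by the same uniqueness argument as associativity of $\oplus$, now with $T$-completeness of $f_E$, $B = T \times E$, and $g = \flow$. Uniqueness of the $T$-flow is automatic: any solution-flow $\flow'$ satisfies $\flow' \circ (0_\oplus \times \id_E) = \id_E$ by its initialization axiom, and hence equals $\I f_E$. The main obstacle will be the right-unit step, where the identification $\L_1 = 1_T$ must be made either from an implicit unital coherence of $\psi$ or by importing the unital data formally introduced in \cref{sec:converse}.
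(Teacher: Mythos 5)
Your proposal is correct in substance but follows a genuinely different route from the paper. The paper proves the lemma by transport of structure: it defines a convolution-style operation $f \ast g \coloneq g \circ (f \times \id_A) \circ (\id_T \times \Delta_A)$ on the hom-sets $\Sys_\F(\L_A, {1}_T)$, observes that this gives a monoid natural in $A$ with $\pi_T$ as neutral element, uses the $T$-completeness bijection $\Sys_\F(\L_A,{1}_T) \cong \C(A,T)$ to transfer it to the representable functor $\C(-,T)$, and then invokes the Yoneda lemma to land the structure on $T$ itself (and likewise transfers an action of this monoid to get the flow). You instead construct $\oplus = \I\,{1}_T$ and $\flow = \I f_E$ directly and verify each axiom by exhibiting both sides of each equation as solutions with the same initial condition, which is the same uniqueness principle applied pointwise to each law rather than once to the whole structure. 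Your route has the advantage of making explicit exactly which coherences are used where (naturality of $\psi$ and $0$ for $\id_T \times \oplus$, the Axiom \textbf{D5} associator together with Axiom \textbf{D6} for $\oplus \times \id_T$ --- note you should cite \textbf{D6} explicitly in that step to decompose $0_{T \times T}$), and in doing so you essentially inline the content of \cref{lem:associatedcoalg}, which the paper only proves afterwards; the paper's route is more compact and handles associativity, unitality, and the action uniformly, but hides the verification that $\ast$ is well defined and that $\pi_T$ lies in $\Sys_\F(\L_A,{1}_T)$.

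The one substantive issue is the right unit law, and you have diagnosed it accurately: showing $\pi_T \maps T \times 1 \to T$ is a system map $\L_1 \to {1}_T$ requires a unital coherence for $\psi$ of the kind only introduced as Axiom \textbf{D7} in \cref{sec:converse}, which is not available in a bare monoidal setting for dynamic stability. You should be aware that this is not a defect peculiar to your argument: the paper's own proof asserts that $\pi_T$ is the neutral element of $(\Sys_\F(\L_A,{1}_T), \ast)$ without checking that $\pi_T$ is a system map, so it leans on the same implicit coherence. Given that you flag the gap rather than paper over it, and that every other step checks out (including the uniqueness claims, which both reduce correctly to the uniqueness clause of $T$-completeness applied at the initial condition $\id$), your proof is acceptable, and its construction $\oplus = \I\,{1}_T$ is consistent with how the paper later uses the lemma in \cref{rmk:Tselfcomplete} and \cref{def:integral}.
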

\begin{proof}
    Define an operation $\ast$ on the hom-set $\Sys_\F(\L_A, {1}_T)$ by letting $f \ast g$ be the following composite.
    \[
        f \ast g := \left[ T \times A \xrightarrow{\id_T \times \Delta_A} T \times A \times A \xrightarrow{f \times \id_A} T \times A \xrightarrow{g} T 
        \right]
    \]
    Note that in the case $A=1$, this is simply $g \circ f$. Associativity of $\ast$ is a direct consequence of associativity of composition and coassociativity of $\Delta_A$. Projection $\pi_T \maps T \times A \to T$ is a neutral element with respect to this operation. This monoid structure is natural in $A$ due to naturality of $\Delta_A$. Thus $T$-completeness of ${1}_T$ allows us to transfer this monoid structure not just to the hom-set $\C(A,T)$, but the representable functor $\C(-,T) \maps \C\op \to \Set$. Since $\C(-,T) \times \C(-,T) \cong \C(-,T \times T)$, $T$ inherits a monoid structure by the Yoneda lemma. Uniqueness follows from the universal property. 

    Similarly, for a $T$-complete system ${f}_E \maps E \to \F E$, we define an action of the monoid $\Sys_\F(\L_A,{1}_T)$ on $\Sys_\F(\L_A,E)$ by duplication of the $A$ component, followed by composition. For precisely the same reasons as above, this corresponds to an action of $T$ on $E$, unique with respect to the property of being a map of $\F$-systems. 
\end{proof}

\begin{rmk}
\label{rmk:Tselfcomplete}
    One consequence of \cref{lem:Tselfcomplete} is that the unit clock system is ``compatible'' with the monoid structure on $T$ in the sense that the operation is a map of systems $\L_T \to  1_T$, i.e. the following diagram commutes. 
    \[\begin{tikzcd}
        \F(T \times T)
        \arrow[r, "\F\oplus"]
        &
        \F T
        \\
        T \times T
        \arrow[u, "\L_T"]
        \arrow[r, "\oplus"']
        &
        T
        \arrow[u, " 1_T"']
    \end{tikzcd}\]
    From this it is easy to check that $\D\oplus =  1_T$. 
\end{rmk}

\begin{defn}
    A monoidal setting for dynamic stability is said to have \define{compatible unit clock} if $ 1_T$ is $T$-complete. 
\end{defn}

It will be important to know when systems of the form $\L_X$ are $T$-complete. Notice that under the assumption of a compatible unit clock, given any map of the form $g \maps B \to T \times X$, the composite \[T \times B \xrightarrow{\id \times g} T \times T \times X \xrightarrow{\oplus \times \id} T \times X\] is a solution of $\L_X$. This makes some intuitive sense as the system $\L_X \maps T \times X \to \F(T \times X)$ is essentially defined to be the unit clock on the $T$-component, and stationary on the $X$-component. What we do not know is whether this is the only possible solution for $\L_X$ which has initial value $g$.

\begin{lem}
\label{lem:associatedcoalg}
    In a monoidal setting for dynamic stability with a compatible unit clock, $\oplus \times \id_X \maps T \times T \times X \to T \times X$ is a map of systems of the form $\L_{T \times X} \to \L_X$, and thus $\D(\oplus \times \id_X)=\L_X$. 
\end{lem}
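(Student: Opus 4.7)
The plan is to verify directly that the square with vertical arrows $\L_{T \times X}$ and $\L_X$, and horizontal arrows $\oplus \times \id_X$ and $\F(\oplus \times \id_X)$, commutes. This is accomplished by unfolding the definitions of $\L_{T \times X}$ and $\L_X$ and applying, in turn, Axioms \textbf{D5} and \textbf{D6}, naturality of the laxator $\psi$, and the compatibility observation in \cref{rmk:Tselfcomplete}.

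First I would rewrite $\L_{T \times X}$. By Axiom \textbf{D6}, $0_{T \times X} = \psi_{T,X} \circ (0_T \times 0_X)$, so substituting into the definition $\L_{T \times X} = \psi_{T, T \times X} \circ (1_T \times 0_{T \times X})$ and invoking the coherence of $\psi$ from Axiom \textbf{D5} rewrites $\L_{T \times X}$, modulo the associator of $\C$, as $\psi_{T \times T, X} \circ (\L_T \times 0_X)$. Then naturality of $\psi$ applied to $\oplus \maps T \times T \to T$ and $\id_X$ yields
\[
\F(\oplus \times \id_X) \circ \L_{T \times X} = \psi_{T,X} \circ \bigl((\F\oplus \circ \L_T) \times 0_X\bigr).
\]
Invoking \cref{rmk:Tselfcomplete}, which gives the identity $\F\oplus \circ \L_T = 1_T \circ \oplus$, this simplifies to $\psi_{T,X} \circ (1_T \times 0_X) \circ (\oplus \times \id_X) = \L_X \circ (\oplus \times \id_X)$, establishing the square and hence the first assertion.

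For the final equality $\D(\oplus \times \id_X) = \L_X$, I would unfold \cref{def:derivative} to get
\[
\D(\oplus \times \id_X) = \F(\oplus \times \id_X) \circ \L_{T \times X} \circ (0_\oplus \times \id_{T \times X}),
\]
replace the first two composites by $\L_X \circ (\oplus \times \id_X)$ using the square just established, and finally observe that $(\oplus \times \id_X) \circ (0_\oplus \times \id_{T \times X}) = \id_{T \times X}$ by the left unit axiom of the monoid $(T, \oplus, 0_\oplus)$.

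I expect the main obstacle to be the bookkeeping with the associator $\alpha$ of the Cartesian product when fitting Axiom \textbf{D5} into the rewriting of $\L_{T \times X}$; the argument works morally in a strictly associative setting, but in the stated framework one must track each instance of $\alpha$ carefully so that it cancels against its inverse before the final naturality step can be applied.
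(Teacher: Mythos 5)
Your proposal is correct and follows essentially the same route as the paper: the paper proves the same square by a single pasting diagram whose cells are exactly your steps --- Axiom \textbf{D6} to decompose $0_{T\times X}$, Axiom \textbf{D5} for associativity of $\psi$, naturality of $\psi$ against $\oplus\times\id_X$, and the compatibility $\F\oplus\circ\L_T=1_T\circ\oplus$ from \cref{rmk:Tselfcomplete} --- followed by the left unit law of $(T,\oplus,0_\oplus)$ to collapse $\D(\oplus\times\id_X)$ to $\L_X$. The associator bookkeeping you flag is simply suppressed in the paper's diagram (the cartesian product is treated as strictly associative), so it poses no real obstacle.
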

\begin{proof} 
    \[
    \begin{tikzcd}[cramped, column sep=.5em, row sep=3.15em, trim right=13em]
        &
        \F(T\times T\times X)
        \ar[rr, "\F(\oplus\times \id_X)"]
        &&
        \F(T\times X)
        \\
        \F T\times \F(T\times X)
        \ar[ur, "\psi_{T,T \times X}"]
        &&
        |[xshift=-7.5em]|
        \F(T\times T)\times \F X 
        \ar[ul, "\psi_{T \times T, X}"]
        \ar[r, "\F\oplus\times \F\id_X"]
        &
        \F T \times \F X 
        \ar[u, "\psi_{T,X}"]
        \\
        &
        \F T \times \F T \times \F X 
        \ar[ul, "\id \times \psi_{T,X}"']
        \ar[ur, "\psi_{T,T} \times \id"]
        \\
        T\times X
        \ar[r, "0_\oplus \times \id_{T\times X}"]
        &
        T \times T \times X
        \arrow[uul, "{1}_T \times {0}_{T \times X}"]
        \ar[rr, "\oplus \times \id_X"']
        \ar[u, "{1}_T \times {0}_T \times {0}_X"']
        &&
        T\times X
        \ar[uu, "{1}_T\times {0}_X"]
    \end{tikzcd}\]
    The bottom square commutes component-wise; the left diamond is associativity of the laxator, and the top square is naturality of the laxator. Now the bottom-right path collapses to $\L_X$, the top-left path is the definition of associated system.
\end{proof}

It is immediate from the definition of $\D$ that if $\flow$ is a solution flow of $ f$, then $\D\flow =  f$. 

\begin{lem}
\label{lem:flowissolution}
    Assume a setting with compatible unit clock.
    Every $T$-flow $\flow \maps T \times E \to E$ is a solution to its derivative, i.e., $\flow$ is a map of $\F$-systems of the form $\L_E \to \D\flow$. 
\end{lem}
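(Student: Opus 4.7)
The goal is to show the square
\[
\begin{tikzcd}
T \times E \arrow[r, "\flow"] \arrow[d, "\L_E"'] & E \arrow[d, "\D\flow"] \\
\F(T \times E) \arrow[r, "\F\flow"'] & \F E
\end{tikzcd}
\]
commutes. The intuition is pure action associativity: in the vector-field model, $\D\flow(\flow(t,x))$ is $\tfrac{\partial}{\partial s}\flow(s,\flow(t,x))\big|_{s=0}$, while $\F\flow\circ \L_E(t,x)$ is $\tfrac{\partial}{\partial s}\flow(t+s,x)\big|_{s=0}$, and these agree because $\flow(s,\flow(t,x))=\flow(s+t,x)$. I plan to run exactly this argument categorically, by differentiating the action associativity axiom in the first coordinate and evaluating at $s=0$.

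First, I take the composition axiom $\flow\circ(\oplus\times\id_E)=\flow\circ(\id_T\times\flow)$ as morphisms $T\times T\times E\to E$. Applying $\F$ and then precomposing with $\L_{T\times E}$ yields
\[
\F\flow\circ\F(\oplus\times\id_E)\circ\L_{T\times E}
\;=\;
\F\flow\circ\F(\id_T\times\flow)\circ\L_{T\times E}.
\]
The left-hand $\F(\oplus\times\id_E)\circ\L_{T\times E}$ simplifies to $\L_E\circ(\oplus\times\id_E)$ by \cref{lem:associatedcoalg}; the right-hand $\F(\id_T\times\flow)\circ\L_{T\times E}$ simplifies to $\L_E\circ(\id_T\times\flow)$ by naturality of $\L$ (\cref{def:functorL}) applied to the morphism $\flow\maps T\times E\to E$ in $\C$. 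Thus
\[
\F\flow\circ\L_E\circ(\oplus\times\id_E)
\;=\;
\F\flow\circ\L_E\circ(\id_T\times\flow).
\]

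Next, I precompose both sides with $0_\oplus\times\id_{T\times E}\maps T\times E\to T\times T\times E$. On the left, the monoid unit axiom gives $(\oplus\times\id_E)\circ(0_\oplus\times\id_{T\times E})=\id_{T\times E}$, so the left-hand side collapses to $\F\flow\circ\L_E$. On the right, a direct identification in the cartesian structure yields $(\id_T\times\flow)\circ(0_\oplus\times\id_{T\times E})=(0_\oplus\times\id_E)\circ\flow$, so the right-hand side becomes $\F\flow\circ\L_E\circ(0_\oplus\times\id_E)\circ\flow=\D\flow\circ\flow$, by the definition of $\D\flow$ in \cref{def:derivative}. Combining,
\[
\F\flow\circ\L_E \;=\; \D\flow\circ\flow,
\]
which is exactly the statement that $\flow$ is a coalgebra map $\L_E\to\D\flow$.

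The only non-trivial ingredient is the pair of simplifications in the first step: one needs \cref{lem:associatedcoalg} (which in turn uses Axioms \textbf{D5}, \textbf{D6}, and the compatibility of the unit clock) together with naturality of $\L$. The rest is bookkeeping with the cartesian and monoid structure. The main subtlety I expect is making sure the identifications after precomposing with $0_\oplus\times\id_{T\times E}$ are done cleanly in a category-theoretic (rather than elementwise) way — but both reduce to the universal property of the product together with the monoid unit law and the initialization axiom for $\flow$.
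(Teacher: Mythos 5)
Your proposal is correct and uses exactly the same ingredients as the paper's proof — \cref{lem:associatedcoalg}, naturality of $\L$, $\F$ applied to the flow composition axiom, and the product/unit bookkeeping — just written as an equational derivation rather than as the paper's pasting of three commuting squares whose outer rectangle identifies $\F\flow\circ\L_E$ with $\D\flow\circ\flow$. No gaps.
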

\begin{proof}
    Consider the following diagram.
    \[
    \begin{tikzcd}
        \D(\oplus\times \id_E) = \hspace{-1.2cm} &  \left[ T\times E \right. 
        \arrow[r, "0_\oplus \times \id"]
        \arrow[d, "\flow"']
        &
        T \times T\times E
        \arrow[r, "\L_{T \times E}"]
        \arrow[d, "\id \times \flow"']
        &
        \F(T \times T\times E) 
        \arrow[r, "\F \oplus \times \id_E"]
        \arrow[d, "\F(\id \times \flow)"']
        & 
        \left. \F(T\times E) \right]
        \arrow[d, "\F \flow"]
        \\
        \D\flow = \hspace{-2cm} & \left[ E \right.
        \arrow[r, "0_\oplus \times \id_E"']
        &
        T \times E
        \arrow[r, "\L_E"']
        & 
        \F(T \times E) 
        \arrow[r, "\F \flow"']
        &
       \left. \F E \right]
    \end{tikzcd}
    \]
    The left square commutes trivially, the middle square by functoriality of $\L$, and right square commutes by Functoriality of $\F$ (this is the defining square of $T$-flow sent trough $\F$). Recall that $\D(\oplus \times \id_E)=\L_E$ by \cref{lem:associatedcoalg}, and so $\flow$ is a map of systems $\D(\oplus \times \id_E)=\L_E \to \D\flow$.
\end{proof}

\begin{defn}
    We say that an $\F$-system $ f_E \maps E \to \F E$ has the \define{uniqueness property} if whenever $\gamma_1, \gamma_2 \maps T \times B \to E$ are system maps $\L_B \to  f_E$ such that the diagram 
    \[
    \begin{tikzcd}
        B
        \arrow[r, "0_\oplus \times \id"]
        \arrow[d, "0_\oplus \times \id"']
        &
        T \times B 
        \arrow[d, "\gamma_1"]
        \\
        T \times B
        \arrow[r, "\gamma_2"']
        &E
    \end{tikzcd}
    \]
    commutes, then $\gamma_1 = \gamma_2$.
\end{defn}
The uniqueness property is so named because it guarantees that two solutions with the same initial conditions will not diverge, that is solution are unique with respect to their initial condition.
\begin{lem}
\label{lem:TcompEquivalent}
    Assume a setting with compatible unit clock. 
    Let $\flow \maps T \times E \to E$ be a $T$-flow. The following are equivalent:
    \begin{enumerate}
        \item If $g \maps T \times B \to E$ is a system map $\L_B \to \D\flow$, then $g$ is a flow map $(T \times B, \oplus \times \id_B) \to (E, \flow)$. 
        \item $\D\flow$ has the uniqueness property.
        \item $\D\flow$ is $T$-complete.
    \end{enumerate}
\end{lem}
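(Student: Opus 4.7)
The plan is to prove the cycle $(1) \Rightarrow (2) \Rightarrow (3) \Rightarrow (1)$. The central observation is that $\flow$ itself is a system map $\L_E \to \D\flow$ by \cref{lem:flowissolution}, and that functoriality of $\L \maps \C \to \Sys_\F$ (\cref{def:functorL}) sends $g \maps B \to E$ to a system map $\L_B \to \L_E$ whose underlying morphism is $\id_T \times g$. Composing these two, the map $\flow \circ (\id_T \times g) \maps T \times B \to E$ is a system map $\L_B \to \D\flow$ with initial condition $g$; this will serve as the canonical universal solution throughout.

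For $(1) \Rightarrow (2)$: suppose $\gamma_1, \gamma_2 \maps \L_B \to \D\flow$ satisfy $\gamma_1 \circ (0_\oplus \times \id_B) = \gamma_2 \circ (0_\oplus \times \id_B)$. By $(1)$, each $\gamma_i$ is a flow map, so $\gamma_i \circ (\oplus \times \id_B) = \flow \circ (\id_T \times \gamma_i)$. Precomposing this equation with $\id_T \times 0_\oplus \times \id_B \maps T \times B \to T \times T \times B$ and using the monoid right unit $\oplus \circ (\id_T \times 0_\oplus) = \id_T$ together with the flow initialization axiom, one obtains $\gamma_i = \flow \circ (\id_T \times (\gamma_i \circ (0_\oplus \times \id_B)))$; the common initial condition then forces $\gamma_1 = \gamma_2$. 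For $(2) \Rightarrow (3)$: given any $g \maps B \to E$, the map $\flow \circ (\id_T \times g)$ is a system map $\L_B \to \D\flow$ by the central observation, and its initial condition $\flow \circ (0_\oplus \times g) = g$ follows again from flow initialization, establishing existence; uniqueness is exactly the content of $(2)$, so $\D\flow$ is $T$-complete. For $(3) \Rightarrow (1)$: let $g \maps T \times B \to E$ be a system map $\L_B \to \D\flow$ and set $g_0 \coloneq g \circ (0_\oplus \times \id_B)$; by $T$-completeness, $g = \flow \circ (\id_T \times g_0)$, and then the flow composition axiom yields
\[
g \circ (\oplus \times \id_B)
= \flow \circ (\oplus \times \id_E) \circ (\id_T \times \id_T \times g_0)
= \flow \circ (\id_T \times \flow) \circ (\id_T \times \id_T \times g_0)
= \flow \circ (\id_T \times g),
\]
exhibiting $g$ as a flow map.

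I do not anticipate any serious obstacle: the argument is purely formal once \cref{lem:flowissolution} and functoriality of $\L$ are in hand. The only mild subtlety worth pausing on is identifying $\L$ on morphisms with $\id_T \times (-)$, which reduces to naturality of the components $1_T$, $0$, and $\psi$ already baked into \cref{def:functorL}.
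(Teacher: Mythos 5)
Your proof is correct, and it follows the same overall cycle $(1)\Rightarrow(2)\Rightarrow(3)\Rightarrow(1)$ as the paper, with the same key input: $\flow$ is a system map $\L_E \to \D\flow$ (\cref{lem:flowissolution}) and $\id_T \times g$ is a system map $\L_B \to \L_E$ by functoriality of $\L$, so $\flow \circ (\id_T \times g)$ is the canonical solution with initial condition $g$. Your $(1)\Rightarrow(2)$ is the paper's diamond diagram written equationally (the appeal to flow initialization there is superfluous --- only the monoid unit law $\oplus \circ (\id_T \times 0_\oplus) = \id_T$ is used). Your $(2)\Rightarrow(3)$ reaches the same conclusion as the paper but packages the existence step through the composite of system maps rather than the paper's explicit large pasting diagram; the underlying content (ultimately \cref{lem:associatedcoalg}) is the same since \cref{lem:flowissolution} depends on it. The one genuine divergence is $(3)\Rightarrow(1)$: the paper exhibits $\gamma \circ (\oplus \times \id_B)$ and $\flow \circ (\id_T \times \gamma)$ as two system maps $\L_{T \times B} \to \D\flow$ agreeing on the initial condition and invokes $T$-completeness over the object $T \times B$, whereas you invoke $T$-completeness over $B$ to identify $g$ with $\flow \circ (\id_T \times g_0)$ and then verify the flow-map square by a direct computation with the flow composition axiom. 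Your variant is slightly more economical in that it never needs to check that $\gamma \circ (\oplus \times \id_B)$ is itself a system map out of $\L_{T \times B}$; the paper's version has the mild advantage of displaying the two competing solutions symmetrically. Both are valid.
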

\begin{proof}
    ($1 \implies 2$) 
    Let $\gamma_1, \gamma_2 \maps T \times B \to E$ be solution curves with the same initial condition, i.e. system maps of type $\L_B \to \D\flow$ such that $\gamma_1 \circ (0_\oplus \times \id) = \gamma_2 \circ (0_\oplus \times \id)$. The diagram below shows $\gamma_1 = \gamma_2$.
    \[\begin{tikzcd}
    	& {T \times T \times B} & {T \times B} \\
    	{T\times B} && {T \times E} & E \\
    	& {T \times T \times B} & {T \times B}
    	\arrow["{\oplus \times id}"{description}, from=1-2, to=1-3]
    	\arrow["{id \times \gamma_1}"{description}, from=1-2, to=2-3]
    	\arrow["{\gamma_1}"{description}, from=1-3, to=2-4]
    	\arrow["{id \times 0 \times id }"{description}, from=2-1, to=1-2]
    	\arrow["id"{description}, bend left = 60, from=2-1, to=1-3]
    	\arrow["id"{description}, bend right = 60, from=2-1, to=3-3]
    	\arrow["{id \times 0 \times id}"{description}, from=2-1, to=3-2]
    	\arrow["\flow"{description}, from=2-3, to=2-4]
    	\arrow["{id \times \gamma_2}"{description}, from=3-2, to=2-3]
    	\arrow["{\oplus \times id}"{description}, from=3-2, to=3-3]
    	\arrow["{\gamma_2}"{description}, from=3-3, to=2-4]
    \end{tikzcd}\]
    The top-right and bottom-right diamonds commute by the assumption that $\gamma_1$ and $\gamma_2$ are system maps $\L_B \to \D\flow$ and condition (1). The left diamond commutes due to the assumption that $\gamma_1 \circ (0_\oplus \times \id) = \gamma_2 \circ (0_\oplus \times \id)$.

    ($2 \implies 3$) 
    Let $g \maps B \to E$ be a map in $\C$. The composite $\flow \circ (\id \times g)$ is a system map of the form $\L_B \to \D\flow$ because the following diagram commutes.
    \[\begin{tikzcd}[column sep = small]
    	&& 
        E
        \arrow[r, "0_\oplus \times \id"]
        & 
        T \times E 
        \arrow[r, "\L_E"]
        & 
        \F(T \times E)
        \arrow[dr, "\F\flow"]
        \\& 
        T \times E 
        \arrow[ur, "\flow"]
        \arrow[r, "0_\oplus \times \id"]
        \arrow[drr, "\id"description]
        & 
        T \times T \times E 
        \arrow[ur, "\id \times \flow"description] 
        \arrow[r, " \L_{T \times E}"] 
        \arrow[dr, "\oplus \times \id"description]
        & 
        \F(T \times T \times E)
        \arrow[ur, "\F(\id \times \flow)"description]
        \arrow[dr, "\F(\oplus \times \id)"description]
        &&
        \F E
        \\
    	T \times B
        \arrow[ur, "\id \times g"]
        \arrow[rrr, "\id \times g"]
        \arrow[drr, "\L_B"']
        &&&
        T \times E
        \arrow[r, "\L_E"]
        &
        \F(T \times E)
        \arrow[ur, "\F\flow"']
        \\&& 
        \F(T \times B)
        \arrow[urr, "\F(\id \times g)"']
    \end{tikzcd}\]
    Many of the cells in this diagram commute for trivial reasons.
    The square
    \[\begin{tikzcd}
        T \times T \times E
        \arrow[r, "\L_{T \times E}"]
        \arrow[d, "\oplus \times \id"']
        &
        \F(T \times T \times E)
        \arrow[d, "\F(\oplus \times \id)"]
        \\
        T \times E
        \arrow[r, "\L_E"']
        &
        \F(T\times E)
    \end{tikzcd}\]
    commutes by \cref{lem:associatedcoalg}.
    The uniqueness property then says $g$ is uniquely determined by its initial condition, and thus $\D\flow$ is $T$-complete.

    ($3 \implies 1$)
    Recall that $\oplus \maps T \times T \to T$ is a system map $\L_T \to {1}_T$ by \cref{lem:Tselfcomplete} and \cref{rmk:Tselfcomplete}, and $\flow \maps T \times E \to E$ is a system map $\L_E \to \D\flow$. The map $id \times \gamma$ is a system map $\L_{T \times B} \to \L_E$ automatically. Thus both $\gamma \circ (\oplus \times \id)$ and $\flow \circ (\id \times \gamma)$ are system maps of the form $L(T \times B) \to \D\flow$. They also both give $\gamma$ when pre-composed with $0\times id_{T \times B}$. 
    \[\begin{tikzcd}
        T \times B
        \arrow[r, "0 \times \id"]
        \arrow[dr, "\gamma"']
        &T \times T \times B
        \arrow[d, dashed, "\exists!"]
        \\&
        E
    \end{tikzcd}\]
    By universal property, the two maps must be equal.
    \[
    \begin{tikzcd}
        T \times T \times B
        \arrow[r, "\id \times \gamma"]
        \arrow[d, "\oplus \times \id"']
        &
        T \times E 
        \arrow[d, "\flow"]
        \\
        T \times B
        \arrow[r, "\gamma"']
        &
        E
    \end{tikzcd}
    \]
    Thus $\gamma$ is a $T$-flow map $(T\times B, \oplus\times \id_B) \to (E,\flow)$.
\end{proof}

\begin{lem}
\label{lem:LadjU}
    If $\D(\oplus \times \id)$ is always $T$-complete, the functor $\L \maps \C \to \Sys_\F$ factors through $\TComp$, the full subcategory of $\Sys_\F$ spanned by the $T$-complete systems, and $\L$ is left adjoint to the forgetful functor $U \maps \TComp \to \C$.
\end{lem}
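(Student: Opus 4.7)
The plan is to first verify that $\L$ lands in $\TComp$ and then exhibit the adjunction via a hom-set bijection that is essentially the definition of $T$-completeness itself.

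\textbf{Step 1: Factoring through $\TComp$.} By \cref{lem:associatedcoalg}, we have $\D(\oplus \times \id_X) = \L_X$ for every object $X \in \C$. By hypothesis, $\D(\oplus \times \id_X)$ is $T$-complete, so $\L_X$ is an object of $\TComp$ for each $X$. The action of $\L$ on morphisms already lands among $\F$-system maps by \cref{def:functorL} and \cref{rmk:functtocoalgisnat}, so $\L$ factors as $\L \maps \C \to \TComp \hookrightarrow \Sys_\F$.

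\textbf{Step 2: Unit and hom-set bijection.} I take as unit of the proposed adjunction the natural transformation $\eta \maps \id_\C \To U\L$ with components $\eta_X \coloneq 0_\oplus \times \id_X \maps X \to T \times X$; naturality in $X$ is immediate from naturality of the monoidal structure. Given a $T$-complete system $f_A \maps A \to \F A$ and a morphism $g \maps X \to A$ in $\C$, \cref{def:Tcomplete} (applied to $B = X$) produces a unique $\F$-system map $\widehat g \maps \L_X \to f_A$ satisfying $\widehat g \circ \eta_X = g$. Conversely, for any $\F$-system map $\gamma \maps \L_X \to f_A$, the composite $\gamma \circ \eta_X \maps X \to A$ is a morphism in $\C$, and the uniqueness clause of $T$-completeness forces $\widehat{\gamma \circ \eta_X} = \gamma$. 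Thus the assignments
\[
\TComp(\L_X, f_A) \;\xrightarrow{\;\gamma \mapsto \gamma \circ \eta_X\;}\; \C(X, U f_A)
\]
and $g \mapsto \widehat g$ are mutually inverse bijections.

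\textbf{Step 3: Naturality of the bijection.} For naturality in $X$ along $h \maps X' \to X$, both $\widehat{g \circ h}$ and $\widehat g \circ \L h$ are $\F$-system maps $\L_{X'} \to f_A$ whose precomposition with $\eta_{X'}$ equals $g \circ h$ (using naturality of $\eta$ for one side), so uniqueness in $T$-completeness identifies them. For naturality in $f_A$ along a $T$-complete system map $k \maps f_A \to f_{A'}$, the analogous argument applies: $\widehat{Uk \circ g}$ and $k \circ \widehat g$ agree after precomposition with $\eta_X$, hence coincide. This establishes the adjunction $\L \dashv U$ with unit $\eta$.

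I do not anticipate a genuine obstacle: the crux is simply recognizing that \cref{def:Tcomplete} is already the hom-set presentation of an adjunction with $\L$ on the left, once Step~1 has placed the $\L_X$ inside $\TComp$. The only step requiring a small amount of care is naturality in $f_A$, where one must invoke the uniqueness half of $T$-completeness for $f_{A'}$ rather than just the existence half.
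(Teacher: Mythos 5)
Your proposal is correct and follows essentially the same route as the paper: both factor $\L$ through $\TComp$ via \cref{lem:associatedcoalg} and then observe that \cref{def:Tcomplete} makes each $0_\oplus \times \id_X$ a universal arrow from $X$ to $U$. The only difference is presentational --- the paper invokes Mac Lane's universal-arrow-to-adjunction theorem to conclude, whereas you unwind that theorem by hand, verifying the hom-set bijection and its naturality directly.
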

\begin{proof}
    Since $\D(\oplus \times \id_B) = \L_B$ is always $T$-complete, $\L$ can be factored through $\TComp$. 
    Each map $0_\oplus \times \id_B \maps B \to T \times B = (U \circ \L)(B)$ is a universal arrow from $B$ to $U$, $(\L_B, 0_\oplus \times \id _B \maps B \to U(\L_B))$ since if you had another pair $(f \maps A \to \F A, \gamma \maps B \to Uf = A)$, then the $T$-completeness of $f$ gives a unique map of systems $\overline \gamma \maps \L_B \to f$ such that $U(\overline \gamma) \circ (0_\oplus \times \id) = \gamma$.
    The maps $0_\oplus \times \id_B \maps B \to T \times B = U(\L_B)$ clearly form a natural transformation $0_\oplus \times \id \maps id_\C \To U \circ \L$. 
    The result then follows from \cite[Ch.\ IV, Theorem 2]{MacLane}.
\end{proof}

\begin{example}\label{ex:classicaladjunction}
    Recall the setting for dynamic stability for vector fields on manifolds given in \cref{ex:classicstability2}. In this setting, the free coalgebra $\L_X$ has a derivative of 1 on the time axis and 0 on the manifold. by the Picard--Lindel\"of--Cauchy--Lipschitz criterion of differential equations, we know it must have unique solutions. Since $\L_X$ always has solutions by functoriality of $\L$, then we conclude that $\L_X$is always $T$-complete, therefore the adjunction of \cref{lem:LadjU} is valid in this setting.
\end{example}

\begin{example}
\label{ex:PLCL}
    Continuing \cref{ex:LTS3}, the derivative of the flow is the graph formed by taking the output at each point $x\in X$ after a flow of $a\in A$ and setting a unique arrow $x\xrightarrow{a}\flow(x,a)$. For any flow $\flow:T\times X\to X$, it is a routine exercise to show $\D\flow$ such system is deterministic and complete (in the sense that every node admit an arrow with every label). Those two attribute are equivalent to being $T$-complete. This show that we have the stronger condition of uniqueness in  \cref{thm:existenceuniqueness}.
\end{example}

\begin{lem}
\label{lem:integralfunctor}
    Let $ f \maps A \to \F A$ and $ g \maps B \to \F B$ be $T$-complete $\F$-systems. If $p \maps A \to B$ is a map of $\F$-systems $ f \to  g$, then it is also a map of $T$-flows $\int  f \to \int  g$. In this way, $\I$ extends to a functor $\TComp \to \TFlow$. 
\end{lem}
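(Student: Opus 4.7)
The plan is to show that $p$ satisfies the $T$-flow map equation $p \circ \I f = \I g \circ (\id_T \times p) \maps T \times A \to B$ by exhibiting both sides as maps of $\F$-systems $\L_A \to g$ with the same initial condition $p$, and then appealing to the uniqueness clause in the $T$-completeness of $g$. The functoriality statement will then follow immediately, since $\I$ is the identity on underlying morphisms and composition/identity preservation is inherited from $\C$.

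First I would analyze the composite $p \circ \I f$. By construction (\cref{def:integral}), $\I f$ is a map of systems $\L_A \to f$, and by hypothesis $p$ is a map of systems $f \to g$, so their composite is a map of systems $\L_A \to g$. Precomposing with the unit $0_\oplus \times \id_A \maps A \to T \times A$ and using $\I f \circ (0_\oplus \times \id_A) = \id_A$ from the defining diagram \cref{eq:integral}, the initial condition is simply $p$.

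Next I would analyze the composite $\I g \circ (\id_T \times p)$. The map $\id_T \times p$ is a map of systems $\L_A \to \L_B$ by functoriality of $\L$ (\cref{def:functorL}), and $\I g$ is a map of systems $\L_B \to g$ by construction, so their composite is a map of systems $\L_A \to g$. Its initial condition is computed by
\[
\I g \circ (\id_T \times p) \circ (0_\oplus \times \id_A) = \I g \circ (0_\oplus \times \id_B) \circ p = \id_B \circ p = p,
\]
using naturality of $0_\oplus \times \id_{(-)}$ and the normalization of $\I g$.

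Now both composites are system maps $\L_A \to g$ whose postcomposition with $0_\oplus \times \id_A$ equals $p \maps A \to B$. Applying the $T$-completeness of $g$ to the morphism $p \maps A \to B$, the uniqueness clause of \cref{def:Tcomplete} forces them to agree, which is precisely the $T$-flow map condition for $p \maps \I f \to \I g$. Finally, to promote $\I$ to a functor $\TComp \to \TFlow$ with trivial action on morphisms, I note that identities and composites of morphisms in $\TComp$ are identities and composites in $\C$, so the preservation axioms are inherited from $\C$; the only content is the verification just completed, that such underlying morphisms lift from system maps to flow maps. There is no real obstacle here: the argument is a clean double application of the universal property, and the only mild point to check is that $\id_T \times p$ genuinely lifts to a map $\L_A \to \L_B$, which is exactly the naturality spelled out in \cref{def:functorL}.
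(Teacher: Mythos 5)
Your proof is correct and follows essentially the same route as the paper: both sides of the flow-map equation are exhibited as system maps $\L_A \to g$ with initial condition $p$, and the uniqueness clause of $T$-completeness of $g$ forces them to coincide. The only (minor) point the paper adds explicitly is that $\I f$ and $\I g$ are indeed $T$-flows, which it attributes to \cref{lem:Tselfcomplete}.
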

\begin{proof}
    That $\I f$ and $\I g$ are $T$-flows follows from \cref{lem:Tselfcomplete}. 
    
    Consider the following diagram.
    \[\begin{tikzcd}
        \F(T \times A)
        \arrow[r, "\F(\id \times p)"]
        &
        \F(T \times B)
        \arrow[r, "\F(\I g)"]
        & 
        \F B
        \\
        T \times A 
        \arrow[u, "\L_A"]
        \arrow[r, "\id \times p"']
        & 
        T \times B
        \arrow[u, "\L_B"]
        \arrow[r, "\I g"']
        &
        B
        \arrow[u, "g"']
    \end{tikzcd}\]
    The left diagram commutes due to functoriality of $\L$. 
    The right diagram commutes by the definition of $\I$.
    
    Consider the following diagram.
    \[\begin{tikzcd}
        \F(T \times A)
        \arrow[r, "\F(\I f)"]
        &
        \F A
        \arrow[r, "\F p"]
        & 
        \F B
        \\
        T \times A 
        \arrow[u, "\L_A"]
        \arrow[r, "\I f"']
        & 
        A
        \arrow[u, "f"]
        \arrow[r, "p"']
        &
        B
        \arrow[u, "g"']
    \end{tikzcd}\]
    The left diagram commutes by definition of $\I$. 
    The right diagram commutes by the assumption that $p$ is a map of $\F$-systems.

    The composites $\I g \circ (\id \times p)$ and $p \circ \I f$ are therefore both solutions of $g$. It is clear they both have initial condition $p$.  
    \[\begin{tikzcd}[column sep = small]
    	A
        \arrow[rr, "0_\oplus \times \id"]
        \arrow[dddrr, "p"', bend right = 15]
        && 
        T \times A
        \arrow[rrr, "\L_A"]
        \arrow[dl, "\I f"description]
        &&& 
        \F(T \times A)
        \arrow[dl, "\F\I f"]
        \arrow[ddr, "\F(\id \times p)"]
        \\& 
        A
        \arrow[ddr, "p"]
        \arrow[rrr, "f", pos = 0.8]
        &&& 
        \F A
        \arrow[ddr, "\F p", pos = 0.3]
        \\&&& 
        T \times B
        \arrow[dl, "\I g"']
        &&& 
        \F(T \times B) 
        \arrow[dl, "\F\I g"]
        \\&& 
        B
        \arrow[rrr, "g"']
        &&& 
        \F B
        \arrow[crossing over, from = 1-3, to = 3-4, "\id \times p", pos = 0.3]
        \arrow[crossing over, from = 3-4, to = 3-7, "\L_B", pos = 0.7]
    \end{tikzcd}\]
    By $T$-completeness of $g$, these composites must be equal, and thus $p$ is a map of $T$-flows.
\end{proof}

\begin{rmk}
\label{rmk:integral}
    As noted in \cref{rmk:Tx-monad}, the monad induced by the adjunction $U\L$ of \cref{lem:LadjU} is $T \times -$, and the Eilenberg-Moore category is $\C^{T \times -} \cong \TFlow$. The canonical comparison functor $\TComp \to \TFlow$ is precisely the functor $\I$ given in \cref{lem:integralfunctor}.
\end{rmk}

\begin{lem}
\label{lem:ID=1}
    Assume a setting with compatible unit clock. Let $\flow \maps T \times E \to E$ be a $T$-flow. If $\D\flow$ is $T$-complete, then $\I\D\flow = \flow$.
\end{lem}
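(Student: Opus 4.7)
The plan is to use the universal property that defines $\I\D\flow$ together with \cref{lem:flowissolution} to exhibit $\flow$ itself as a solution of $\D\flow$ with the correct initial condition, then invoke uniqueness.

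First I would unpack the definition: $\I\D\flow \maps T \times E \to E$ is, by \cref{def:integral}, the unique map of $\F$-systems $\L_E \to \D\flow$ such that
\[
\I\D\flow \circ (0_\oplus \times \id_E) = \id_E.
\]
Existence of this unique map is exactly the hypothesis that $\D\flow$ is $T$-complete, applied to $g = \id_E \maps E \to E$.

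Next I would observe that $\flow$ itself fits the same universal problem. By \cref{lem:flowissolution} (which is available since compatibility of the unit clock is a standing assumption in \cref{thm:existenceuniqueness}), every $T$-flow is a map of $\F$-systems $\L_E \to \D\flow$, so $\flow$ is a solution of $\D\flow$. Moreover, $\flow$ satisfies the initialization axiom of a $T$-flow, giving
\[
\flow \circ (0_\oplus \times \id_E) = \id_E.
\]
Hence $\flow$ and $\I\D\flow$ are both system maps $\L_E \to \D\flow$ whose composites with $0_\oplus \times \id_E$ equal $\id_E$.

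Finally, by the uniqueness clause in the $T$-completeness of $\D\flow$, there is at most one such map, so $\flow = \I\D\flow$. The only step that requires any care is noting that we may indeed apply \cref{lem:flowissolution} here — i.e., that compatibility of the unit clock is in force — but since \cref{thm:existenceuniqueness} assumes $1_T$ is $T$-complete, this is automatic. There is no real obstacle: the lemma is purely a uniqueness argument, and both the existence of $\I\D\flow$ and the witness role of $\flow$ have already been established.
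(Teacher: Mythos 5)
Your proof is correct and follows exactly the paper's own argument: apply \cref{lem:flowissolution} to see $\flow$ as a system map $\L_E \to \D\flow$ with initial condition $\id_E$, note $\I\D\flow$ satisfies the same universal problem by \cref{def:integral}, and conclude by the uniqueness in $T$-completeness. No differences worth noting.
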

\begin{proof}
    By \cref{lem:flowissolution}, $\flow$ is a system map $\L_E \to \D\flow$, and it has $\flow \circ (0_\oplus \times \id_E) = \id_E$ by definition of $T$-flows. By \cref{def:integral}, $\I\D\flow$ is also a system map $\L_E \to \D\flow$ that has $\flow \circ (0_\oplus \times \id_E) = \id_E$. Since $ f$ is $T$-complete, these must be equal.
\end{proof}

\begin{proof}
[Proof of \cref{thm:existenceuniqueness}]
    We have shown in \cref{lem:Tselfcomplete}, that $\I f_E$ is a solution to $ f_E$. We showed that $\I$ is a functor $\TComp \to \TFlow$ in  \cref{def:integral} and \cref{rmk:integral}.

    That the derivative defines a functor $\D \maps \TFlow \to \Sys_\F$ can be checked directly, or through the universal property of $\Sys_\F$ given in \cref{rmk:functtocoalgisnat}. The assumption that $\D\flow$ is $T$-complete implies that this corestricts to the form $\D \maps \TFlow \to \TComp$ since $\D(\oplus \times \id_X) = \L_X$ by \cref{lem:associatedcoalg}.

    By \cref{lem:TcompEquivalent}, $\D\flow$ has the uniqueness property, and thus $\I f_E$ is the only solution of $ f_E$.
    We showed in \cref{lem:ID=1} that $\I\D \flow = \flow$. In \cref{eq:integral}, the upper path is $\D\I f_E$ by definition, and the lower is $ f_E$. Thus $\I\D  f_E =  f_E$.

    Since $\D\I = \id_{\TComp}$ and $\I\D_{\TFlow}$, $\D$ and $\I$ form an isomorphism of categories.
\end{proof}

\begin{proof}[Proof of \cref{cor:monadic}]
    In \cref{rmk:integral}, we noted that $\I \maps \TComp \to \TFlow$ is the canonical comparison functor given by the universal property of $\TFlow$ as the Eilenberg-Moore category of the monad $U\circ\L = T \times -$. \cref{thm:existenceuniqueness} shows that it is an equivalence.  
\end{proof}

\section{Converse Lyapunov Theorem}
\label{sec:converse}

Finally, we address the converse of Lyapunov's theorem. Lyapunov's theorem tells us that a Lyapunov function certifies the stability of the equilibrium point of interest, and the converse assures us that there are no stable points that are missed if we just consider Lyapunov functions.

In \cite{CLT1}, we proved a converse for flow settings under the additional assumption that $R$ admits certain suprema. Specifically, we showed that for a $T$-flow $\flow \maps T \times E \to E$ in a setting for stability, if a point $x^*\in E$ is stable, then there is a morphism which is positive definite relative to $x^*$ and decrescent relative to the flow. To complete the coalgebraic side of the story, we presently demonstrate that if a positive definite morphism is decrescent relative to the integral flow of a $T$-complete $\F$-system $f \maps E \to \F E$, then it is decrescent relative to the system $f$ itself. 

As in the flow story, obtaining a converse requires some additional assumptions. 

\begin{defn}
\label{def:ConverseSetting}
    A \define{converse setting for stability} is a monoidal setting for dynamic stability satisfying the follow additional axioms:
    \begin{itemize}
        \item[\textbf{D7:}] (lax monoidal functor) The stationary system associated to the terminal object $0_1 \maps 1 \to \F1$ is neutral with respect to the laxator in the sense that the following diagrams commute.
        \[
        \begin{tikzcd}
            1 \times \F X
            \arrow[r, "0_1 \times \id"]
            \arrow[d, "\lambda_{\F X}"']
            &
            \F 1 \times \F X
            \arrow[d, "\psi_{1,X}"]
            \\
            \F X
            &
            \F(1 \times X)
            \arrow[l, "\F\lambda_X"]
        \end{tikzcd}
        \qquad
        \begin{tikzcd}
            \F X \times 1
            \arrow[r, "\id \times 0_1"]
            \arrow[d, "\rho_{\F X}"']
            &
            \F X \times \F 1
            \arrow[d, "\psi_{X,1}"]
            \\
            \F X
            &
            \F(X \times 1)
            \arrow[l, "\F\rho_X"]
        \end{tikzcd}
        \]
        \item[\textbf{D8:}] (unitary) $0_1 \maps 1 \to \F1$ is $T$-complete.
        \item[\textbf{D9:}] (order-preserving) For $A \in \C$, if $f \leq g \maps A \to R$, then $\F f \leq \F g \maps \F A \to \F R$. 
    \end{itemize}
\end{defn}

\begin{rmk}
    Along with Axioms \textbf{D5} and \textbf{D6}, the first axiom says both that $\F$ is lax monoidal and that $0$ is a \define{monoidal natural transformation}, ${0} \maps \id_\C \To \F$. As noted in \cref{rmk:tensorproduct}, the laxator $\psi$ defines an associative tensor product $\otimes_\psi$ on $\Sys_\F$. Axiom \textbf{D7} implies that $0_1 \maps 1 \to \F 1$ is a unit object with respect to this tensor product, and thus $\Sys_\F$ carries a monoidal structure. This also implies that $\L_1 =  1_T \otimes_\psi 0_1 \cong  1_T$. 

    Recall from \cref{rmk:functtocoalgisnat} that the data of the natural transformation $ 0 \maps \id_\C \to \F$ can be rearranged into a functor of the form $ 0 \maps \C \to \Sys_\F$ which sends an object $A$ to its corresponding stationary system $ 0_A$. It is straightforward to check that monoidalness of a natural transformation $\id_\C \To \F$ corresponds to strict monoidalness of the induced functor $\C \to \Sys(\F)$.
\end{rmk}

\begin{lem}
\label{lem:Dpi=0}
    In a converse setting, $0_X=\D\pi_X$ for all $X \in \C$. 
\end{lem}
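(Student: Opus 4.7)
The strategy is to show that the projection $\pi_X \maps T \times X \to X$ is a solution flow of the stationary system $0_X \maps X \to \F X$, after which $\D \pi_X = 0_X$ follows immediately from the observation (made just before \cref{lem:flowissolution}) that $\D \flow = f$ whenever $\flow$ is a solution flow of $f$. It is straightforward to check that $\pi_X$ is a $T$-flow: the initialization and composition axioms reduce to the universal property of the product.

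What must be verified is the solution condition $\F \pi_X \circ \L_X = 0_X \circ \pi_X$. Writing $\pi_X = \lambda_X \circ (!_T \times \id_X)$ and $\L_X = \psi_{T,X} \circ (1_T \times 0_X)$, I would chase the left-hand side through the following ingredients, in order: naturality of $\psi$ (Axiom \textbf{D5}) to push $!_T$ past $\psi$, producing $\F \lambda_X \circ \psi_{1,X} \circ ((\F !_T \circ 1_T) \times 0_X)$; the fact (to be proved separately, see below) that $!_T \maps (T, 1_T) \to (1, 0_1)$ is a morphism of systems, to replace $\F !_T \circ 1_T$ with $0_1 \circ !_T$; Axiom \textbf{D6} to collapse $\psi_{1,X} \circ (0_1 \times 0_X)$ to $0_{1 \times X}$; and finally naturality of the stationary natural transformation $0$ to identify $\F \lambda_X \circ 0_{1 \times X}$ with $0_X \circ \lambda_X$. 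The composite then equals $0_X \circ \lambda_X \circ (!_T \times \id_X) = 0_X \circ \pi_X$, as desired.

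The substantive step is establishing that $!_T$ is a system map from $(T, 1_T)$ to $(1, 0_1)$, i.e.\ $\F !_T \circ 1_T = 0_1 \circ !_T$. Here I would invoke Axiom \textbf{D8}, the $T$-completeness of $0_1$: applied with $B = 1$ and $g = \id_1$, it yields a unique system map $\gamma \maps (T \times 1, \L_1) \to (1, 0_1)$ with $\gamma \circ (0_\oplus \times \id_1) = \id_1$. Since $1$ is terminal, $\gamma$ must coincide with $!_{T \times 1}$, so this map is automatically a system map. Separately, Axiom \textbf{D7} together with naturality of $\rho$ gives $\F \rho_T \circ \L_1 = 1_T \circ \rho_T$, so that $\rho_T \maps (T \times 1, \L_1) \to (T, 1_T)$ is an isomorphism of systems. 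Precomposing $!_{T \times 1}$ with $\rho_T\inv$ then expresses $!_T$ as a composite of system maps, establishing the claim.

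The main potential obstacle is the identification $\L_1 \cong 1_T$ via $\rho_T$; once this short diagram chase is carried out using Axiom \textbf{D7} and naturality of $\rho$, the remainder of the argument is routine bookkeeping with the monoidal and naturality axioms.
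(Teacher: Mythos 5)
Your proposal is correct and follows essentially the same route as the paper's proof: both use Axiom \textbf{D8} to exhibit $!_T$ as a system map $(T,1_T) \to (1,0_1)$, and then chase $\F\pi_X \circ \L_X$ down to $0_X \circ \pi_X$ via naturality of $\psi$, Axiom \textbf{D6}, and naturality of $0$. The only difference is that you spell out the unitor bookkeeping (the identification $\L_1 \cong 1_T$ via $\rho_T$ using Axiom \textbf{D7}) that the paper leaves implicit by writing $T$ in place of $T \times 1$.
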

\begin{proof}
    Since $0_1$ is $T$-complete, then it has a solution flow $T \times 1 \to 1$, which has to be the unique terminal map.
    \[
    \begin{tikzcd}
        1
        \arrow[r, "0_\oplus"]
        \arrow[dr, "\id"']
        &
        T
        \arrow[r, " 1_T"]
        \arrow[d, "!"]
        &
        \F T
        \arrow[d, "\F!"]
        \\&
        1
        \arrow[r, "0_1"']
        &
        \F1
    \end{tikzcd}
    \]
    We then construct the following commuting diagram.
    \[
    \begin{tikzcd}
        X
        \arrow[r, "0_\oplus \times \id"]
        \arrow[dr, "\id"']
        &
        T \times X
        \arrow[rr, "\L_X", bend left]
        \arrow[r, "1_T \times 0_X"']
        \arrow[d, "\pi"]
        &
        \F T \times \F X
        \arrow[r, "\psi_{T,X}"]
        \arrow[d, "\F! \times \id"]
        &
        \F(T \times X)
        \arrow[d, "\F\pi_X"]
        \\&
        X
        \arrow[r, "0_1 \times 0_X"']
        \arrow[rr, bend right, "0_{X}"']
        &
        \F1 \times \F X
        \arrow[r, "\psi_{1,X}"']
        &
        \F X
    \end{tikzcd}
    \]
    The upper path is exactly the definition of $\D\pi_X$, so the whole diagram says $\D\pi_X = 0_X$.
\end{proof}

\begin{lem}
    Assume a converse setting. Let $ f \maps E \to \F E$ be an $\F$-system and $\flow \maps T \times E \to E$ be a solution flow of $ f$. If $x^* \maps 1 \to E$ is an equilibrium point for $\flow$, then it is an equilibrium point for $ f$ in the sense that the following diagram commutes.
    \begin{equation}
    \label{eq:coalgeqpt}
    \begin{tikzcd}
        \F1
        \arrow[r, "\F x^*"]
        &
        \F E
        \\
        1
        \arrow[u, "0_1"]
        \arrow[r, "x^*"']
        &
        E
        \arrow[u, " f"']
    \end{tikzcd}
    \end{equation}
\end{lem}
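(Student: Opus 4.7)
The plan is to combine the equilibrium identity for $\flow$ with the solution-flow equation $f \circ \flow = \F\flow \circ \L_E$, and then strip off the resulting terminal factor using the $T$-completeness of $0_1$ guaranteed by Axiom \textbf{D8}. Throughout I silently identify $T$ with $T \times 1$ via the right unitor.

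First I would apply $f$ to the equilibrium identity $\flow \circ (\id_T \times x^*) = x^* \circ !$ (where $! \maps T \times 1 \to 1$ is the terminal map) to obtain
\[
f \circ x^* \circ \,! \;=\; f \circ \flow \circ (\id_T \times x^*) \;=\; \F\flow \circ \L_E \circ (\id_T \times x^*),
\]
the second equality coming from $\flow$ being a solution of $f$. The key simplification is to rewrite $\L_E \circ (\id_T \times x^*)$ as $\F(\id_T \times x^*) \circ \L_1$: expanding $\L_E = \psi_{T,E} \circ (1_T \times 0_E)$, using naturality of $0$ (Axiom \textbf{D6}) to replace $0_E \circ x^*$ by $\F x^* \circ 0_1$, and then the naturality of $\psi_{T,-}$ built into Axiom \textbf{D5}, gives the claim. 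Substituting and pulling the equilibrium identity inside $\F$ yields
\[
f \circ x^* \circ \,! \;=\; \F(\flow \circ (\id_T \times x^*)) \circ \L_1 \;=\; \F x^* \circ \F \,! \circ \L_1.
\]

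Next I would identify $\F \,! \circ \L_1$ with $0_1 \circ !$ using Axiom \textbf{D8}. Since $0_1 \maps 1 \to \F 1$ is $T$-complete, its universal property applied to $g = \id_1 \maps 1 \to 1$ produces a unique $\gamma \maps T \times 1 \to 1$, which by terminality of $1$ must equal $!$; the coalgebra-morphism square of this solution reads exactly $\F \,! \circ \L_1 = 0_1 \circ !$. Substituting gives $f \circ x^* \circ \,! = \F x^* \circ 0_1 \circ \,!$. To finish, I would pre-compose both sides with the section $0_\oplus \times \id_1 \maps 1 \to T \times 1$; since $! \circ (0_\oplus \times \id_1) = \id_1$, this cancels the terminal map and delivers the desired equation $f \circ x^* = \F x^* \circ 0_1$.

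The main obstacle I anticipate is the naturality chain that rewrites $\L_E \circ (\id_T \times x^*)$ as $\F(\id_T \times x^*) \circ \L_1$. Conceptually this is just the statement that $\id_T \times x^*$ is a morphism of systems $\L_1 \to \L_E$, i.e.\ the functoriality of $\L$ (\cref{def:functorL}), but written out it is a multi-step diagram chase through Axioms \textbf{D5} and \textbf{D6}. I would present it as a single pasting diagram rather than a chain of equalities, for readability. Everything else is then a short consequence of $T$-completeness and terminality.
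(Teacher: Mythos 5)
Your proof is correct and is essentially the paper's argument: the paper assembles the same ingredients (naturality of $\L$, the solution-flow square $f \circ \flow = \F\flow \circ \L_E$, and $\F$ applied to the equilibrium identity) into a single pasting diagram whose top and bottom rows are $\D! = 0_1$ and $\D\flow = f$. The only real difference is that you derive the needed identity $\F! \circ \L_1 = 0_1 \circ\, !$ directly from the universal property in Axiom \textbf{D8} (taking $g = \id_1$ and using terminality), whereas the paper invokes the more general \cref{lem:Dpi=0}, whose proof also routes through Axiom \textbf{D7}; your shortcut is valid for this special case.
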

\begin{proof}
    In the diagram below, the top and bottom cells commute by the definition of $\D$. The leftmost square commutes by product. The center square commutes by naturality of $\L$. The rightmost square commutes by applying $\F$ to the condition that $x^*$ is an equilibrium point of $\flow$. 
    \[\begin{tikzcd}
        1
        \arrow[bend left, rrr, "\D!=0_1"]
        \arrow[r, "0_\oplus \times \id"']
        \arrow[d, "x^*"']
        &
        T \times 1
        \arrow[r, "\L_1"]
        \arrow[d, "\id \times x^*"]
        &
        \F(T \times 1)
        \arrow[r, "\F!"]
        \arrow[d, "\F(\id \times x^*)"]
        &
        \F1
        \arrow[d, "\F x^*"]
        \\
        E
        \arrow[rrr, bend right, "\D\flow =  f"']
        \arrow[r, "0_\oplus \times \id"'']
        &
        T \times E
        \arrow[r, "\L_E"']
        &
        \F(T \times E)
        \arrow[r, "\F\flow"']
        &
        \F E
    \end{tikzcd}\]
    By \cref{lem:Dpi=0}, $\D!=0_1$, and $\D\flow = f$ since $\flow$ is a solution of $ f$. 
\end{proof}

The lemma above has a converse when $ f$ is $T$-complete, in which case we can replace $\flow$ with $\I f$. 

\begin{lem}
    Assume a converse setting. Let $ f \maps E \to \F E$ be a $T$-complete $\F$-system. If $x^* \maps 1 \to E$ is an equilibrium point for $ f$ in the sense of \cref{eq:coalgeqpt}, then it is an equilibrium point for $\I f$.
\end{lem}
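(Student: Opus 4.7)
The hypothesis \cref{eq:coalgeqpt} states precisely that $x^* \maps 1 \to E$ is a morphism of $\F$-systems from $(1, 0_1)$ to $(E, f)$. My plan is to push this morphism through the integral functor $\I \maps \TComp \to \TFlow$ of \cref{lem:integralfunctor} and read off the resulting flow-map equation as the equilibrium condition of \cref{def:equlibriumpoint}.

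First, I would verify that both $(1,0_1)$ and $(E,f)$ lie in $\TComp$: the target is $T$-complete by hypothesis, and the source is $T$-complete by Axiom \textbf{D8}. Thus $x^*$ represents a morphism in $\TComp$, and applying $\I$ yields a morphism of $T$-flows $\I 0_1 \to \I f$. By \cref{def:flow}, this is exactly the commuting identity
\[
    \I f \circ (\id_T \times x^*) \;=\; x^* \circ \I 0_1 \maps T \times 1 \to E.
\]

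Second, I would observe that because $1$ is terminal in $\C$, the map $\I 0_1 \maps T \times 1 \to 1$ is forced to be the canonical terminal map, which under the identification $T \times 1 \cong T$ coincides with the unique morphism $T \to 1$. Substituting, the displayed equation becomes precisely the commuting diagram of \cref{def:equlibriumpoint} that witnesses $x^*$ as an equilibrium point of the $T$-flow $\I f$.

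I do not anticipate a real obstacle. The argument is essentially formal: it recognizes the coalgebraic equilibrium condition as a morphism in $\Sys_\F$, invokes functoriality of $\I$ on $\TComp$ (for which Axiom \textbf{D8} plays the essential role of placing $0_1$ into $\TComp$), and uses the universal property of the terminal object to identify $\I 0_1$ with the terminal map. One could alternatively give a direct uniqueness argument --- comparing the two candidate solutions $\I f \circ (\id_T \times x^*)$ and $x^* \circ {!_T}$ of $f$ with initial condition $x^*$ --- but this would repeat work already encapsulated in the functoriality statement of \cref{lem:integralfunctor}.
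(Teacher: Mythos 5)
Your proof is correct, and it reaches the conclusion by a different decomposition than the paper. The paper's proof is precisely the ``direct uniqueness argument'' you mention declining: it exhibits $x^* \circ \mathord{!}$ and $\I f \circ (\id_T \times x^*)$ as two morphisms of $\F$-systems $\L_1 \to f$ with the same initial condition $x^*$ (using \cref{lem:Dpi=0} to see that $! \maps T \times 1 \to 1$ is a solution of $0_1$, and \cref{eq:coalgeqpt} for the other square), and then invokes $T$-completeness of $f$ to force them to coincide. You instead read \cref{eq:coalgeqpt} as saying that $x^*$ is a morphism $(1,0_1) \to (E,f)$ in $\TComp$ (with Axiom \textbf{D8} placing the source there), apply the functor $\I$ of \cref{lem:integralfunctor}, and identify $\I 0_1$ with the terminal map. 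Since the proof of \cref{lem:integralfunctor} is itself exactly the two-solutions-with-equal-initial-condition comparison, the mathematical content is identical; your route buys brevity and makes clear that the lemma is a formal consequence of functoriality of $\I$, while the paper's route is self-contained at the cost of redrawing the diagrams in this special case. Two minor points to make explicit if you write this up: the identification of $\I 0_1$ with $!$ should be justified by terminality of $1$ (any map into $1$ is the terminal map), which is the content of the first diagram in \cref{lem:Dpi=0}; and the equilibrium square of \cref{def:equlibriumpoint} is stated on $T$ rather than $T \times 1$, so the last step silently uses the unitor $T \cong T \times 1$ --- harmless, but worth a word.
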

\begin{proof}
    Consider the following diagram
    \[
    \begin{tikzcd}
        \F(T \times 1)
        \arrow[r, "\F!"]
        &
        \F1
        \arrow[r, "\F x^*"]
        &
        \F E
        \\
        T \times 1
        \arrow[u, "\L_1"]
        \arrow[r, "!"']
        &
        1
        \arrow[u,"0_1"]
        \arrow[r, "x^*"']
        &
        E
        \arrow[u, " f"']
    \end{tikzcd}
    \]
    The left square commutes since $\pi$ is a solution of $0_1$ by \cref{lem:Dpi=0}. The right square commutes by the assumption that $x^*$ is an equilibrium point of $ f$. Thus the composite \[T \times 1 \xrightarrow!1 \xrightarrow{x^*} E\] is a map of $\F$-system $\L_1 \to  f$. It is clear that $x^* \circ ! \circ (0_\oplus \times id) = x^*$. 
    
    Consider the following diagram.
    \[
    \begin{tikzcd}
        \F(T \times 1)
        \arrow[r, "\F(\id \times x^*)"]
        &
        \F(T \times E)
        \arrow[r, "\F \I f"]
        &
        \F E
        \\
        T \times 1
        \arrow[u, "\L_1"]
        \arrow[r, "\id \times x^*"']
        &
        T \times E
        \arrow[u,"\L_E"]
        \arrow[r, "\I f"']
        &
        E
        \arrow[u, " f"']
    \end{tikzcd}
    \]
    The left square commutes by functoriality of $\L$, or equivalently the naturality of the transformation corresponding to $\L$. The right square commutes since $\I f$ is a solution flow of $ f$. Thus the composite 
    \[T \times 1 \xrightarrow{\id \times x^*} T \times E \xrightarrow{\I f} E\]
    is a map of $\F$-system $\L_1 \to  f$. Since $\I f$ is a $T$-flow, we have $x^* \circ \I f \circ (\id \times x^*) = x^*$. 

    The composites $\I f \circ (\id \times x^*)$ and $x^* \circ !$ are therefore solutions of $ f$ with the same initial condition.
    \[\begin{tikzcd}[column sep = small]
    	1
        \arrow[rr, "0_\oplus \times \id"]
        \arrow[dddrr, "x^*"', bend right = 15]
        && 
        T \times 1
        \arrow[rrr, "\L_E"]
        \arrow[dl, "!"description]
        &&& 
        \F(T \times 1)
        \arrow[dl, "\F!"]
        \arrow[ddr, "\F(\id \times x^*)"]
        \\& 
        1
        \arrow[ddr, "x^*"]
        \arrow[rrr, "0_1", pos = 0.8]
        &&& 
        \F 1
        \arrow[ddr, "\F x^*", pos = 0.3]
        \\&&& 
        T \times E
        \arrow[dl, "\I f"']
        &&& 
        \F(T \times E) 
        \arrow[dl, "\F\I f"]
        \\&& 
        E
        \arrow[rrr, " f"']
        &&& 
        \F E
        \arrow[crossing over, from = 1-3, to = 3-4, "\id \times x^*", pos = 0.3]
        \arrow[crossing over, from = 3-4, to = 3-7, "\L_E", pos = 0.7]
    \end{tikzcd}\]
    Thus by $T$-completeness of $ f$, $x^*$ is an equilibrium point of $\I f$ as desired.
\end{proof}

\begin{lem}
\label{lem:decrescent}
    Assume a converse setting. Let $ f \maps E \to \F E$ be a $T$-complete system. If $V \maps E \to R$ is flow decrescent for $\I f$ in the sense of \cref{def:lyapunov}, then $V$ is system decrescent for $ f$ in that it makes \cref{eq:SysDecrescent} lax commute.
\end{lem}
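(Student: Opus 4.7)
The plan is to derive the system-level decrescent inequality by applying $\F$ to the flow-level decrescent inequality and then precomposing with $\L_E \circ (0_\oplus \times \id_E) \maps E \to \F(T \times E)$. Two facts are central. First, $\I f$ is a map of systems $\L_E \to f$ whose initial condition is $\id_E$ (by \cref{eq:integral}), so that
\[
\F(\I f) \circ \L_E = f \circ \I f \qquad \text{and} \qquad \I f \circ (0_\oplus \times \id_E) = \id_E.
\]
Second, $\D\pi_E = 0_E$ by \cref{lem:Dpi=0}. Finally, axiom \textbf{D9} guarantees that $\F$ preserves order on morphisms into $R$, so inequalities survive under this rewriting.

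Starting from the hypothesis $V \circ \I f \leq V \circ \pi \maps T \times E \to R$, apply $\F$ and invoke \textbf{D9} to obtain $\F V \circ \F(\I f) \leq \F V \circ \F \pi$ as morphisms $\F(T \times E) \to \F R$. Then precompose both sides with $\L_E \circ (0_\oplus \times \id_E)$; precomposition preserves order because the posetal structure on $R$ is defined via hom-posets with order-preserving pullback maps.

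On the left-hand side,
\[
\F V \circ \F(\I f) \circ \L_E \circ (0_\oplus \times \id_E) = \F V \circ f \circ \I f \circ (0_\oplus \times \id_E) = \F V \circ f,
\]
using that $\I f$ is a system map and that its initial condition is $\id_E$. On the right-hand side,
\[
\F V \circ \F\pi \circ \L_E \circ (0_\oplus \times \id_E) = \F V \circ \D\pi_E = \F V \circ 0_E = 0_R \circ V,
\]
using \cref{def:derivative}, \cref{lem:Dpi=0}, and naturality of the transformation $0$. Combining yields $\F V \circ f \leq 0_R \circ V$, which is exactly the system decrescent condition making \cref{eq:SysDecrescent} lax commute.

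No step here is a serious obstacle — the argument is essentially a chain of categorical rewrites — but care must be taken with the direction of inequalities under the paper's convention that a $2$-cell $\alpha \To \beta$ on a posetal object corresponds to $\alpha \geq \beta$; this is precisely what axiom \textbf{D9} is tailored to respect, and it is the reason the whole derivation goes through with the order intact.
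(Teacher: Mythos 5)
Your proof is correct and takes essentially the same route as the paper's: both apply $\F$ to the flow-decrescent inequality for $\I f$ via Axiom \textbf{D9}, precompose with $\L_E \circ (0_\oplus \times \id_E)$, and identify the two sides as $\F V \circ f$ and $0_R \circ V$ using \cref{lem:Dpi=0}. The only cosmetic difference is that the paper transports the projection side through $R$ (naturality of $\L$ and $\D\pi_R = 0_R$) whereas you transport it through $E$ ($\D\pi_E = 0_E$ and naturality of $0$).
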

\begin{proof}
    In the diagram
    \[
    \begin{tikzcd}
        E
        \arrow[r, "0_\oplus \times \id_E"]
        \arrow[d, "V"']
        &
        T \times E 
        \arrow[r, "\L_E"]
        \arrow[d, "\id \times V"']
        & 
        \F(T \times E)
        \arrow[r, "\F\flow"]
        \arrow[d, "\F(\id \times V)"']
        &
        \F E
        \arrow[d, "\F V"]
        \\
        R
        \arrow[r, "0_\oplus \times \id_R"']
        &
        T \times R 
        \arrow[r, "\L_R"']
        & 
        \F(T \times R)
        \arrow[r, "\F\pi_R"']
        \ar[ur,Rightarrow]
        &
        \F R
    \end{tikzcd}
    \]
    the left square commutes trivially, the middle square commutes by functoriality of $\L$, and the right square lax commutes by assumption and Axiom \textbf{D9}. The top row is $\D\flow$ by definition. The bottom row is the system $0_R$ by \cref{lem:Dpi=0}.
\end{proof}

\begin{thm}[Converse Lyapunov Theorem]
\label{thm:Lyapconvsys}
    Assume a converse setting such that $R$ has local suprema commuting with whiskering. Let $x^* \maps 1 \to E$ be an equilibrium point of a $T$-complete system $ f \maps E \to \F E$. If $x^*$ is stable, then there exists a positive definite morphism $V \maps E \to R$ such that \cref{eq:SysDecrescent} lax commutes.
\end{thm}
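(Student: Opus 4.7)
The plan is to reduce the converse for systems to the converse for flows (\cref{thm:Lyap} of \cite{CLT1}) and then transport flow-decrescence back to system-decrescence using the machinery developed earlier in this section. The whole argument should fit on half a page, because the heavy lifting has already been done.

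First, since $f \maps E \to \F E$ is $T$-complete, \cref{thm:existenceuniqueness} produces a $T$-flow $\I f \maps T \times E \to E$ which is a solution flow of $f$. The preceding lemmas of \cref{sec:converse} show that the coalgebraic equilibrium point $x^* \maps 1 \to E$ of $f$ is also an equilibrium point of the associated flow $\I f$ in the sense of \cref{def:equlibriumpoint}. So the hypothesis that $x^*$ is stable for the system $f$ unpacks to the statement that $x^*$ is a stable equilibrium of the $T$-flow $\I f$ in the sense of \cref{def:stable}.

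Next, I would apply the flow-theoretic converse Lyapunov theorem, \cref{thm:Lyap}, to the flow $\I f$. The ambient setting is (by assumption) a converse setting in which $R$ admits local suprema commuting with whiskering, so the hypotheses of that theorem are met. This yields a morphism $V \maps E \to R$ which is positive definite with respect to $x^*$ and decrescent along $\I f$ in the sense of \cref{def:lyapunov}, i.e., the left-hand flow square
\[
\begin{tikzcd}
    T \times E
    \arrow[r, "\I f"]
    \arrow[d, "\pi"']
    &
    E
    \arrow[d, "V"]
    \\
    E
    \arrow[r, "V"']
    \arrow[ur, Rightarrow]
    &
    R
\end{tikzcd}
\]
lax commutes.

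Finally, I would invoke \cref{lem:decrescent}, which asserts precisely that flow-decrescence of $V$ along $\I f$ implies system-decrescence of $V$ along $f$, i.e., the square \cref{eq:SysDecrescent} lax commutes. Combined with the positive definiteness of $V$ produced in the previous step, this completes the proof.

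There is no real obstacle here: all three steps are black-boxes supplied by results already established in the paper (\cref{thm:existenceuniqueness}, \cref{thm:Lyap}, and \cref{lem:decrescent}). The only subtlety worth flagging is the implicit identification of the two notions of stability, systemic and flow-theoretic, via the integral $\I$. Under the $T$-completeness hypothesis this identification is canonical and harmless, since the $\I$\,--$\D$ isomorphism of \cref{thm:existenceuniqueness} makes $\I f$ the essentially unique $T$-flow solving $f$.
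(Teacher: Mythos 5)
Your proposal is correct and follows essentially the same route as the paper's own proof: pass to the solution flow $\I f$, apply the converse Lyapunov theorem for flows to obtain a positive definite, flow-decrescent $V$ (the paper takes $V = \sup_T \|\flow\|_{x^*}$ explicitly), and then invoke \cref{lem:decrescent} to convert flow-decrescence into system-decrescence. Your remark about identifying systemic and flow-theoretic stability via $\I$ is a fair observation of a point the paper leaves implicit, but it does not change the argument.
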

\begin{proof}
    Assume that $x^*$ is a stable equilibrium point of $\I f$. Then by the converse Lyapunov theorem for flows given in \cite{CLT1}, $V \coloneq \sup_T \xnorm{\flow}$ is a positive definition morphism $E \to R$ which is decrescent is the flow sense of \cref{def:lyapunov}. Then by \cref{lem:decrescent}, since $V$ is flow decrescent for $\I f$, then it is system decrescent in the sense of \cref{eq:SysDecrescent} for $ f$. 
\end{proof}

\subsection*{Acknowledgments}

This research was supported by the Air Force Office of Scientific Research under the Multidisciplinary University Research Initiative grant Hybrid Dynamics ‐ Deconstruction and Aggregation (HyDDRA). This research was supported by a grant from Wallonie-Bruxelles International (WBI). We would like to thank David Spivak and Paulo Tabuada for the numerous discussions on the categorical framing of Lyapunov theory. We would also like to thank \'Alvaro Rodr\'iguez Abella, John Baez, and Todd Trimble. 

\bibliographystyle{plain}
\bibliography{references}

\end{document}